\title{The Geometry of r-Adaptive Meshes Generated Using Optimal Transport Methods}
\author{C.J. Budd\thanks{University of Bath, UK, BA2 7AY (mascjb@bath.ac.uk).},\and
 R. D. Russell\thanks{Simon Fraser University, Burnaby, BC, Canada, V5N IS6 (rdr@cs.sfu.ca).}\and E. Walsh\thanks{Simon Fraser University, Burnaby, BC, Canada, V5N IS6 (ewalsh@sfu.ca).}}
\begin{document}

\maketitle

\begin{abstract}
The principles of mesh equidistribution and alignment
play a fundamental role in the design of adaptive methods \cite{HRBook},
and a metric tensor $\mathbf{M}$ and mesh metric are useful theoretical tools
for understanding a method's level of mesh alignment, or anisotropy. We
consider a mesh redistribution method based on the Monge-Amp\`ere
equation \cite{Finn},\cite{BW:06}, \cite{BW:09},
\cite{BHRActa}, \cite{Budd96}, which combines equidistribution of a given
scalar density function $\rho$ with optimal transport. It does not involve
explicit use of a metric tensor $\mathbf{M}$, although such a tensor must
exist for the method, and an interesting question to ask is whether
or not the alignment produced by the metric gives an anisotropic mesh.
For model problems with a linear feature and with a radially symmetric feature,
we derive the exact form of the metric $\mathbf{M}$, which involves
expressions for its eigenvalues and eigenvectors. The eigenvectors are shown
to be orthogonal and tangential to the feature, and the ratio of the
eigenvalues (corresponding to the level of anisotropy) is shown to depend,
both locally and globally, on the value of $\rho=\sqrt{\det{\mathbf{M}}}$
and the amount of curvature. We thereby demonstrate how the optimal
transport method produces an anisotropic mesh along a given feature
while equidistributing a suitably chosen scalar density function.
Numerical results are given to verify these results and to demonstrate
how the analysis is useful for problems involving more complex features,
including for a non-trivial time dependant nonlinear PDE which evolves narrow
and curved reaction fronts.
\end{abstract}
 
\begin{keywords}
Alignment, Anisotropy, Mesh Adaptation, metric tensor, Monge-Amp\`ere.
 
\end{keywords}
 
\begin{AMS}
35J96, 65M50, 65N50
\end{AMS}
 
\pagestyle{myheadings}
\thispagestyle{plain}
\markboth{}{}
\section{Introduction}
 
\noindent
Efficiently and accurately computing solutions to PDEs (partial differential
equations) which exhibit large variations in small regions of a physical
domain frequently demands using some form of mesh adaptation/redistribution.
It is often desirable to adjust the size, shape and orientation of the
mesh elements to the geometry and flow field of the solution of the
underlying physical problem. More specifically, if the solution displays
anisotropic behaviour, then an anisotropic mesh can potentially capture
solution features with a minimal number of mesh points concentrated along such
features. This is in contrast to many adaptive methods, such as Winslow's
method \cite{Wins}, which explicitly adjust only the size of mesh elements,
typically using equidistribution of some measure of the solution
as a guide, and as a result often enforcing unnecessary shape regularity.

\vspace{0.1in}
 
\noindent As a consequence, there has been considerable interest in
designing adaptive mesh algorithms tailored for anisotropic problems. The
idea of using a metric tensor to quantify anisotropy was exploited in
two-dimensional mesh generation as early as the 1990's \cite{Fortin},
\cite{Az}, and accurate a posteriori \cite{Pic}, \cite{H2010}, and a priori
\cite{Form}, \cite{H2005}, anisotropic error estimates have since been
developed. For example, the Hessian matrix of a function provides a metric
\cite{Hecht} which arises in bounding error estimates for its interpolation error
and can be used to generate a mesh minimising this error \cite{Hecht2},
\cite{Cao}, \cite{Vallet}, \cite{H2005b}. Anisotropic mesh adaptation
methods have since been applied with great success to various problems
\cite{H2010b}, \cite{Fortin}, \cite{Form2}, \cite{Az2}, and much
software, such as BAMG \cite{Hecht3}, and Mesh Adap \cite{Li},
has been developed based on the metric tensor concept. The majority
of the codes implement adaptive mesh {\em refinement} (AMR or h-adaptivity)
methods in which meshes are locally refined by the addition of extra points.
Advantages of this approach are that the resulting methods
are flexible and robust and can deal with many complex solution and
boundary geometries; disadvantages are that h-adaptive methods
have complex data structures and refinement is predominately local,
which complicates understanding of global mesh regularity. Another
disadvantage is that when components of the flow move (e.g. eddies, fronts,
gravity currents), mesh points must be removed from regions they
have left and new mesh points included in the regions they enter.
As small-scale features propagate out of regions in which they are
resolved into regions in which they are partially resolved, this can
potentially lead to abrupt changes in grid resolution and result in
spurious wave reflection, refraction, or scattering \cite{Vich}, \cite{Vich2}.

\vspace{0.1in}
 
\noindent
In contrast, adaptive mesh {\em redistribution} methods, or r-adaptive
methods, relocate a fixed number of mesh points in an attempt to
generate an optimal mesh on which to represent the solution to the problem,
usually guided by the explicit or implicit construction of a mesh mapping and
a scalar or tensor valued monitor function represented in terms of the
Jacobian matrix of this mapping \cite{HRBook}. These methods potentially
offer certain advantages, such as fixed data structures, smoothly  
graded meshes, and an ability to
analyse through this mesh mapping a close coupling between the mesh and the
problem solution \cite{BHRActa}. Although still much less developed than AMR methods,
both theoretically and practically, they have been applied in many areas of
science and engineering with great success to solve problems involving
boundary layers, inversion layers, shock waves, ignition fronts, storm
fronts, gas combustion and groundwater hydrodynamics \cite{walsh},
\cite{HZZ}, \cite{Hyman}, \cite{stockie}, \cite{Tang},
\cite{TangTang}.
 
\vspace{0.1in}
 
\noindent
Anisotropic mesh generation for r-adaptivity is rigorously studied in
\cite{HRBook}, where a metric tensor (a symmetric positive definite
matrix valued monitor function) based on interpolation error
is derived. By showing the equivalence between a mesh constructed
from this metric tensor and certain equidistribution and alignment
conditions, one arrives at a good understanding of the geometry of the
resulting meshes. This metric tensor is closely tied to the
Jacobian of the associated mesh mapping. The majority of r-adaptive
methods considered in \cite{HRBook} use a variational approach, and various
classes of such methods are examined there, including ones involving
a combination of terms designed associated with equidistribution and alignment.
 
\vspace{0.1in}
 
\noindent In this paper we consider r-adaptive meshes generated from optimal
transport methods solving Monge-Ampere type problems. These methods,
described in \cite{Finn}, \cite{Chacon}, \cite{MohamRuss},
\cite{BW:06}, \cite{BW:09}, \cite{BHRActa}, \cite{walsh}, \cite{Pab},
combine local mesh scaling (equidistributing a specified scalar
monitor function to determine how big mesh elements
are) with a global regularity constraint (which requires that the mesh mapping
be as close as possible in a suitable norm to the identity mapping).
This requires solving an associated scalar Monge-Amp\`ere (MA) equation and
constructing the mesh mapping from the gradient of its solution.
These methods have the potential advantages of being robust, flexible,
and cheap to implement, for both two and three dimensional problems, 
particularly CFD type problems \cite{Cos1}, \cite{Cos2}.
They also have certain very desirable properties, such as
an absence of mesh tangling and an inheritance of self-similar
behaviour in the solution \cite{Finn},\cite{BW:09}. The above papers
describe in detail the implementation, convergence and scalability of
these methods to many examples. Interestingly, in an attempt to understand local and global properties of the mesh geometry 
analytical results have been obtained in \cite{Finn} that show optimal transport methods minimise a measure of grid distortion; 
however, to date analysis has been lacking for describing precise anisotropic structure of these meshes for sharp interfaces.
The main purpose of this paper is to provide such an analysis.
 
\vspace{0.1in}
 
\noindent
The mesh geometry can be described directly from the metric tensor, or
equivalently from the mesh qualities of
local scaling (mesh size), anisotropy (mesh alignment) and regularity
(mesh skewness) \cite{KNPP}, \cite{HRBook}. These are not entirely
straightforward to understand since a metric tensor is not used explicitly,
although it can be approximated as part of the mesh calculation. However,
in certain cases we can
deduce the local and global properties of the mesh from a careful study
of the analytic solutions of the associated (MA) equation. What is
discovered is that despite their being computed by equidistributing a
scalar quantity when solving the Monge-Amp\`ere equation, the meshes
generated in practice also show good alignment with sharp solution
features. More specifically, for model anisotropic problems having
solutions with linear features and with
high curvature features (including singularities), we are able to show
rigorously that even though the regularity condition imposed by optimal
transport is global, it also leads to anisotropic meshes closely aligned to
the features. The anaysis is
simplified by the fact that optimal transport methods give mesh mappings
with symmetric Jacobians, and consequently the alignment can be
simply related to their Jacobians. We see that the theoretical
results for the model problems are effective in predicting the mesh
behaviour (including the specific level of anisotropy) for more
complicated solutions to time dependent nonlinear PDEs. Moreover, the
results provide intriguing insight into a possible error analysis for
mesh adaptation methods based upon optimal transport.
 
\vspace{0.1in}
 
\noindent An outline of the paper is as follows. In
Section 2 we consider the basic principles of equidistribution and
alignment and the underlying optimal transport method. In Section 3 we examine
mesh alignment for problems with linear anisotropic solution features.
In Section 4 we provide a corresponding study for problems with
radially symmetric features with high curvature (singularities and rings).
In Section 5 we present two numerical examples, using the
results of Sections 3 and 4 to illustrate anisotropic mesh
properties for more complex nonlinear features.
The second example involves the solution of a nonlinear PDE
with an evolving front which is both narrow and has high curvature.
Final conclusions are given in Section 6.
 
%%%%%%%%%%%%%%%%%%%%%%%%%%%%%%%%%%%%%%%%%%%%%%%%%%%%%%%%%%%%%%%%%%%%%%%%%%%%%%%%%%%%%%%%%%%%%%%%%%%%%%%%%
% SECTION 2 BASIC PRINCIPLES OF MESH REDISTRIBUTION}
%%%%%%%%%%%%%%%%%%%%%%%%%%%%%%%%%%%%%%%%%%%%%%%%%%%%%%%%%%%%%%%%%%%%%%%%%%%%%%%%%%%%%%%%%%%%%%%%%%%%%%%%%
\section{Basic principles of anisotropic mesh redistribution and the (MA) algorithm for mesh generation}

\noindent In this section we describe the basic features of r-adaptive mesh redistribution and the corresponding
description of the local mesh geometry in terms of a metric tensor. We then analyse an optimal transport algorithm in this context.

\subsection{ Mesh adaptation using a coordinate map}

\noindent An effective approach for studying the redistribution of an initially uniform mesh is to generate an invertible coordinate transformation $\mathbf{x}=\mathbf{x}(\boldsymbol{\xi}):\Omega_c\rightarrow \Omega_p$, from a fixed computational domain $\Omega_c$ to the physical domain  $\Omega_p$ in which the underlying PDE is posed \cite{HRBook}. The mesh $\tau_p$ in $\Omega_p$ is then generated as the image of a fixed uniform computational mesh $\tau_c$ in $\Omega_c$ which has a fixed number $N$ of elements of some prescribed shape. The alignment and other features of the mesh can then be determined by calculating the properties of the transformation $\mathbf{x}(\boldsymbol{\xi})$.
Assuming for the moment that $\mathbf{x}$ and $\boldsymbol{\xi}$ are given, and for simplicity restricting attention to the  2D case, 
we can consider the local properties of this transformation. 
Let $\hat{K}$ be a circular set in $\Omega_c$, centred at $\boldsymbol{\xi_0}$, such that
$$\hat{K} = \{ \boldsymbol{\xi}: (\boldsymbol{\xi}-\boldsymbol{\xi_0})^{T}(\boldsymbol{\xi}-\boldsymbol{\xi_0})=\hat{r}^2 \},$$
where the radius $\hat{r}\propto(\vert \Omega_c \vert / N)^{1/2}$. 
Linearizing  about  $\boldsymbol{\xi_0}$ we obtain
$$\mathbf{x}(\boldsymbol{\xi})=\mathbf{x}(\boldsymbol{\xi_0})+\mathbf{J}(\boldsymbol{\xi_0})(\boldsymbol{\xi}-\boldsymbol{\xi_0})+\mathrm{O}(\vert \boldsymbol{\xi}-\boldsymbol{\xi_0}\vert^2),$$ and
the corresponding image set $K=\mathbf{x}(\hat{K})$ in $\Omega_p$ is approximately given by
$${K} = \{ \mathbf{x}: (\mathbf{x}-\mathbf{x}(\boldsymbol{\xi_0}))^{T}\mathbf{J}^{-T}\mathbf{J}^{-1}(\mathbf{x}-\mathbf{x}(\boldsymbol{\xi_0}))=\hat{r}^2 \}.$$
As the set $K$ and $\boldsymbol{\xi_0}$ are arbitrary, we can replace $\boldsymbol{\xi_0}$ by a general point $\boldsymbol{\xi}$.
The Jacobian matrix $\mathbf{J}$ and its determinant $J$, referred to simply as the Jacobian, are 
\[
\mathbf{J}=\left[\begin{array}{cc} x_{\xi}& x_{\eta}\\ y_{\xi} & y_{\eta}\end{array}\right] \hspace{1cm} {J}=\left \vert \begin{array}{cc} x_{\xi}& x_{\eta}\\ y_{\xi} & y_{\eta}\end{array}\right \vert=x_{\xi}y_{\eta}-x_{\eta}y_{\xi}.
\]
Taking the singular value decomposition  
\[
\mathbf{J}=U\Sigma V^T, \hspace{1cm}\Sigma=\mathrm{diag}(\sigma_1,\sigma_2),
\]
it follows that
\[
{K} = \{ \mathbf{x}: (\mathbf{x}-\mathbf{x}(\boldsymbol{\xi_0}))^{T} \; U \; \Sigma^{-2} \;  U^T \; (\mathbf{x}-\mathbf{x}(\boldsymbol{\xi_0}))=\hat{r}^2 \}. 
\]
so that  the orientation of $K$ is determined by the left singular vectors  $U=[\mathbf{e_1},\mathbf{e_2}]$, and the size and shape by the singular values $\sigma_1$ and $\sigma_2$ (see Fig \ref{fig1:map}).
\begin{figure}[hhhhhhhh!!!!!]
\begin{center} \includegraphics[height=5cm,width=6cm]{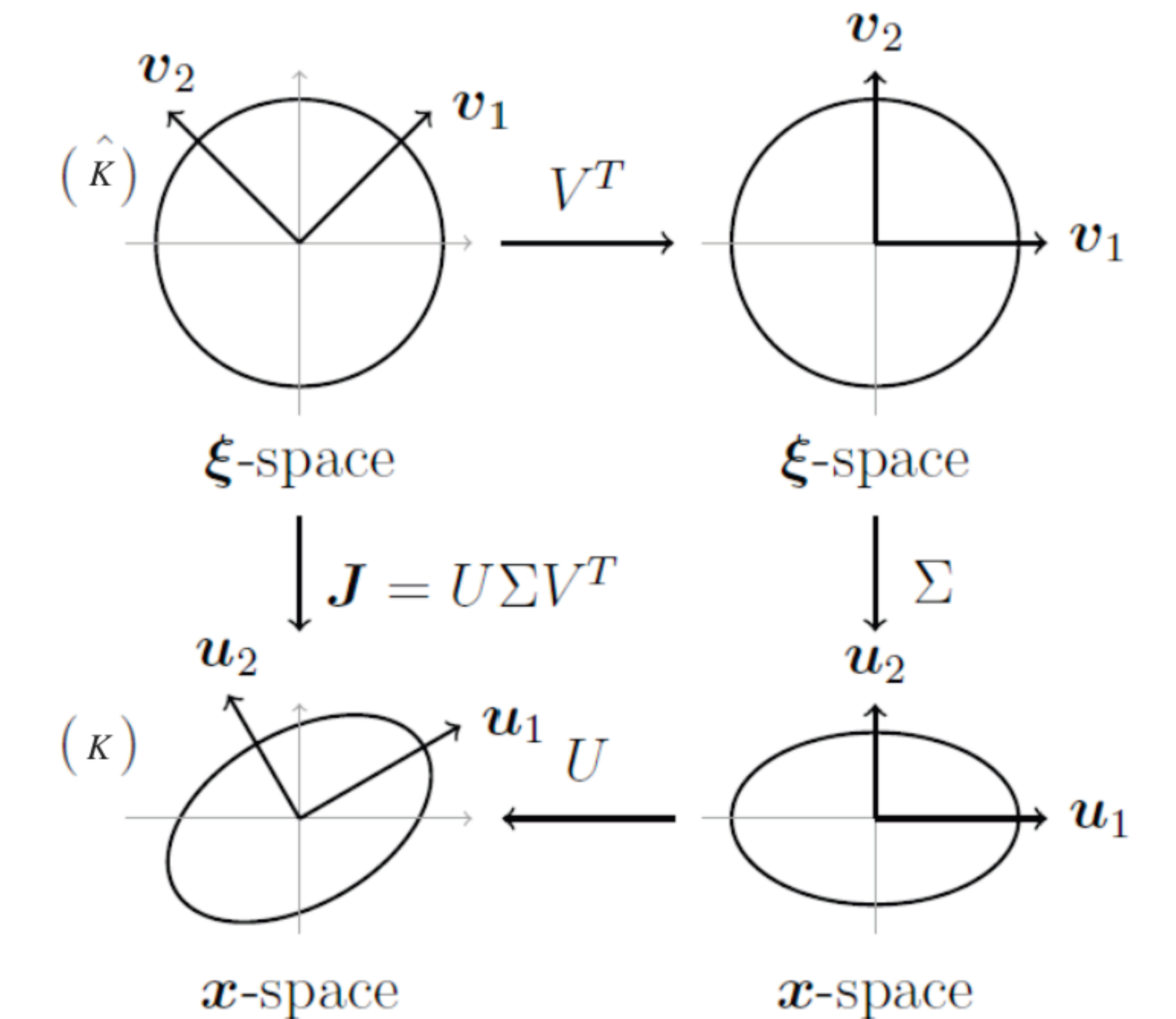}\end{center}
\caption{ \small The 2D mapping of a set  ($\hat{K}$, a circle) in $\Omega_c$, to a physical mesh element ($K$, an ellipse) in $\Omega_p$, under $\mathbf{x}(\boldsymbol{\xi})$. The local anisotropy of the transformation is evident from the degree of compression and stretching of the ellipse.}
\label{fig1:map}
\end{figure}
We can quantify the size, shape and orientation of an element $K$, in the continuous sense, using the singular values and left singular vectors of $\mathbf{J}$, 
and the eigenvalues and eigenvectors of the associated {\em metric tensor} 
\begin{equation}
\boldsymbol{\mathcal{M}}=\mathbf{J}^{-T}\mathbf{J}^{-1}. \label{eqal1}
\end{equation}
The eigenvectors of $\boldsymbol{\mathcal{M}}$ are $\textbf{e}_1$,$ \textbf{e}_2$ and the eigenvalues $\mu_1$, $\mu_2$,  satisfy
$\mu_i=1/\sigma_{i}^{2}$ for $ i=1,2$ , with 
\begin{eqnarray}
\boldsymbol{\mathcal{M}}=U\Sigma^{-2} U^T=\left[\begin{array}{cc} \mathbf{e_1}&  \mathbf{e_2}\end{array}\right]\left[\begin{array}{cc}\frac{1}{\sigma_1^2} & 0\\ 0 & \frac{1}{\sigma_2^2} \end{array}\right]\left[\begin{array}{c}  \mathbf{e_1}^T\\ \mathbf{e_2}^T\end{array}\right]. \nonumber
%:=Q\Lambda_{\mathcal{M}} Q^T,\label{metricM}
\end{eqnarray}
%where  $Q=[\textbf{e}_1\hspace{.2cm} \textbf{e}_2]$, $\Lambda_{\mathcal{M}}=\mathrm{diag}(\mu_1,\mu_2)$.
Hence, the circumscribed ellipse of a mesh element will have principal axes in the direction of the eigenvectors $\mathbf{e_1}$ and $\mathbf{e_2}$, with semi-lengths given by the  values $\sigma_1=\sqrt{1/\mu_1}$ and $\sigma_2=\sqrt{1/\mu_2}$, (although we note that in the discrete case the shape, size, and orientation of a mesh element are only partially determined by this metric). %The ratio $\sigma_1/\sigma_2$ is a realistic measure of anisotropy of the mesh locally. %This is often far more instructive than trying to assess the level of anisotropy by looking at the mesh elements themselves. 
The anisotropy of the mesh locally is given by the ratio of $\sigma_1$ and $\sigma_2$. Accordingly, one natural way measure of the skewness $Q_s$ in terms of $\mathbf{J}$, which provides a measure of mesh quality, is 
\begin{equation}
Q_{s}=\frac{\mathrm{tr}(\mathbf{J^TJ})}{2\det(\mathbf{J^TJ})^{1/2}}=\frac{\sigma_1^2+\sigma_2^2}{2{\sigma_1 \sigma_2}}=\frac{1}{2}\left( {\frac{\sigma_1}{\sigma_2}}+  {\frac{\sigma_2}{\sigma_1}}\right).
\label{qske}
\end{equation}
This measure, and the circumscribed ellipse of a mesh element, are extremely useful for visualising and analysing the degree of anisotropy \cite{HRBook}, as we demonstrate later. We note that many other mesh quality measures exist, for example, those that take in to account small angles \cite{KNPP}, \cite{H2005b}, and more global measures of mesh quality such as the Kwok Chen metric \cite{KC00}. 

\subsection{Equidistribution and Alignment}

One approach to mesh adaptation is to equidistribute a  {\em scalar density function} $\rho(\mathbf{x})>0,$ over each mesh cell such that
\begin{equation}
\rho J=\theta.
\label{eqal2}
\end{equation}
where
\begin{equation}
\theta=\int_{\Omega_p}{\rho}  \; d\mathbf{x}/ \int_{\Omega_c}{d}\boldsymbol{\xi}.\label{thetadef}
\end{equation}
Equation (\ref{eqal2}) is the well known {\em equidistribution principle} which plays a fundamental role in mesh adaptation, giving direct control over the {\em size}, but not the alignment, of the mesh elements. For one-dimensional mesh generation it uniquely specifies the mesh and is widely used \cite{HRBook}, with prescribed $\rho$ often
given by some estimate of the solution error.

\vspace{0.1in}

\noindent For mesh generation in two or more dimensions the equidistribution principle (\ref{eqal2}) alone is insufficient to determine the  mesh uniquely and additional constraints are required \cite{Simp}.  Methods that augment the equidistribution principle with further local constraints are in
\cite{Baines}, \cite{Simp2}, \cite{H2001}, \cite{H2005}, \cite{Knupp}, and 
other principles for anisotropic mesh adaptation in \cite{steinberg}, \cite{BW:09}, \cite{Finn}.  A  common approach to locally controlled anisotropic mesh generation is to 
define the desired level of anisotropy through a metric tensor  $\mathrm{\textbf{M}}$ directly.
Then ${\mathbf M}$ is prescribed and the Jacobian ${\mathbf J}$ of the map is calculated {\bf directly} by enforcing the condition 
 \begin{equation}
Q_{a} \equiv \frac{\mathrm{tr}(\mathbf{J}^{T}\mathbf{M}\mathbf{J})}{2\det(\mathbf{J}^{T}\mathbf{M}\mathbf{J})^{1/2}} = 1.\label{eqal3}
\end{equation}
This extends the skewness measure (\ref{qske}) and is referred to  as the {\em alignment condition} \cite{HRBook}. As it requires that all elements are equilateral with respect to the metric $\mathrm{\textbf{M}}$ 
it allows for direct control of the shape and orientation of a mesh element through an appropriate choice of $\mathrm{\textbf{M}}$. It follows from (\ref{eqal1}) that for any scaled metric tensor $\mathrm{\textbf{M}}= \theta \boldsymbol{\mathcal{M}}$, that $\sqrt{\det(\mathrm{\textbf{M}})} J = \theta, $ for all $\mathbf{x}\in\Omega_p$, which by (\ref{eqal2}) is equidistribution of a \emph{scalar density function} 
\begin{equation}
\rho=\sqrt{\det(\mathrm{\textbf{M}})}.\label{sdf}
\end{equation}
Huang \cite{H2001} shows that combining the equidistribution and alignment conditions (\ref{eqal2})-(\ref{eqal3}) gives 
\begin{equation}
\mathbf{J}^{-T}\mathbf{J}^{-1}=\theta^{-1}\mathrm{\textbf{M}},\hspace{.2cm}\mbox{or equivalently}\hspace{.2cm}\mathbf{J}^{T}\mathrm{\textbf{M}}\mathbf{J}=\theta I.\label{eqal4a}
\end{equation}
That is, when the coordinate transformation satisfies relation (\ref{eqal4a}), the element size, shape, and orientation are completely determined by $\mathrm{\textbf{M}}$ throughout the domain. The resulting mesh will be aligned to the metric $\mathbf{M}$ and equidistributed with respect to the measure $\rho$, and is referred to as {\em M-uniform} \cite{HRBook}. In general there is no unique solution to (\ref{eqal4a}) for an apriori given $\mathbf{M}$, and so in practice this condition can only be enforced approximately. The choice of an appropriate metric tensor is important to the success of this method, and typically those which lead to low interpolation 
errors are chosen.
The simplest choice is to take a scalar matrix monitor function of the form 
\begin{equation}
\mathrm{\mathbf{M}}=\rho I.\label{smm}
\end{equation}
Using a variational approach this is equivalent to Winslow's variable diffusion method \cite{Wins}. 
In this case, 
the singular values of ${\mathbf M}$, and hence the semi-lengths of the circumscribed ellipse of a mesh element are equal (i.e., it is a circle) if
(\ref{smm}) is exactly satisfied and the corresponding mesh is isotropic.
In contrast, Huang \cite{H2005} has derived the exact forms of $\mathrm{\textbf{M}}$ for which the resulting mesh minimizes the interpolation error of some underlying function $u$. Piecewise constant interpolation error can be minimised in the L$^2$-norm if 
\begin{equation}
\mathrm{\textbf{M}}=\kappa_{h,1}[I+\alpha_{h,1}^2 \nabla u\nabla u^T]\label{eqal5}
\end{equation}
where 
$\alpha_{h,1}, \kappa_{h,1}$ are explicitly given parameters. For piecewise linear interpolation, the optimal
metric tensor is given by
\begin{equation}
\mathrm{\textbf{M}}=\kappa_{h,2}[I+\alpha_{h,2} H(u) ],\label{eqal6}
\end{equation}
for suitable parameters $\kappa_{h,2}$, and $\alpha_{h,2}$, where $H(u)$ is the Hessian matrix of $u$.

\vspace{0.1in}

\noindent Whilst effective in generating (essentially optimal) anisotropic meshes, these methods require finding the full Jacobian of the map at each step, which necessitates incorporating
extra convexity conditions to ensure uniqueness, making the resulting (typically variational) methods
challenging to implement. While a scalar matrix monitor function is simpler it can be too restrictive to produce a mesh that is aligned to a physical solution \cite{HRBook}. This begs the question of whether a method that equidistributes a scalar mesh density function is generally capable of producing anisotropic meshes. We demonstrate in the next section that by  combining equidistribution of a \emph{scalar density function} (\ref{sdf}) with a {\em global} constraint, namely \emph{optimal transport}, we can produce suitable anisotropic meshes which are relatively easy to compute.
Furthermore, for certain features, we are able to derive analytically the precise form of the metric $\mathbf{M}$ to which these meshes align and show it has a similar form to those metrics  given in (\ref{eqal5}) and (\ref{eqal6}) which minimise interpolation error. %Numerically we show how the results for the linear features can be useful when analysing alignment of these meshes to more complicated flow fields. 

%If  we define a scaled metric tensor
%\begin{equation}
%\mathrm{\textbf{M}}= \theta \boldsymbol{\mathcal{M}},  \label{eqal1b}
%\end{equation}
%for some constant $\theta>0$, then it follows from (\ref{eqal1}) that $\sqrt{\det(\mathrm{\textbf{M}})} J = \theta, $
%for all $\mathbf{x}\in\Omega_p$. 
%More specifically, if we define the {\em scalar density function} 
%$$\rho(\mathbf{x})=\sqrt{\det(\mathrm{\textbf{M}})}>0,$$
%then all elements have a constant area in the metric $\mathrm{\textbf{M}}$ such that 
%\begin{equation}
%\rho J=\theta.
%\label{eqal2}
%\end{equation}
%Integrating the expression (\ref{eqal2}) over $\Omega_c$ and applying the change of variable formula it follows immediately that
%\begin{equation}
%\theta=\int_{\Omega_p}{\rho}  \; d\mathbf{x}/ \int_{\Omega_c}{d}\boldsymbol{\xi},\label{thetadef}
%\end{equation}
%or equivalently the mesh volume of  $\rho$ is equalised over each mesh cell. 

\subsection{Mesh redistribution using global constraints and the Monge-Amp\`ere equation}

\noindent  In contrast to the previous approaches we now augment condition (\ref{eqal2}) with {\em global constraints} to define the mesh, in particular {\em Optimal Transport Regularisation}
We seek to find a mesh mapping, satisfying (\ref{eqal2}), which is as close as possible (in a suitable norm) to the identity.

\vspace{0.1in}

\begin{definition}
An optimally equidistributed mapping $\mathbf{x}({\mathbf \xi})$ is one which minimizes the functional $I_{2}$, where
\[
I_2 =\int_{\Omega_c}\vert \mathbf{x} (\boldsymbol{\xi}) - \boldsymbol{\xi}\vert^2d\mathbf{x},
\]
over all invertible $\mathbf{x} (\boldsymbol{\xi})$ for which the equidistribution condition (\ref{eqal2}) also holds.
\end{definition}\\

\noindent The following result gives both the existence and uniqueness of such a map and a means to calculate it.

\vspace{0.1in}

\noindent \begin{theorem} (Brenier \cite{Bren}, Caffarelli \cite{Caff}) There exists a unique optimal
mapping $\mathbf{x}(\boldsymbol{\xi})$ satisfying the equidistribution condition (\ref{eqal2}). This map has the same regularity as $\rho$. Furthermore,
the map $\mathbf{x}(\boldsymbol{\xi})$ can be written as the gradient
(with respect to $\boldsymbol{\xi}$) of a unique (up to constants) convex mesh potential $P(\boldsymbol{\xi}, t)$, so that
\[
\mathbf{x}( \boldsymbol{\xi}) = \nabla_{\xi}P(\boldsymbol{\xi}), \hspace{1cm} \Delta_{\xi}P(\boldsymbol{\xi}) > 0.
\]
\end{theorem}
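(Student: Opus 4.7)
The plan is to recast this as a standard quadratic-cost Monge optimal transport problem and invoke the classical theorems of Brenier and Caffarelli. First I would observe that the equidistribution condition $\rho J = \theta$, combined with the change-of-variables formula, says exactly that $\mathbf{x}(\boldsymbol{\xi})$ pushes the uniform measure $d\boldsymbol{\xi}$ on $\Omega_c$ forward to the measure $\rho(\mathbf{x})\,d\mathbf{x}/\theta$ on $\Omega_p$; the definition (\ref{thetadef}) of $\theta$ ensures both measures have the same total mass, so the constraint is consistent.

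Next I would expand $|\mathbf{x}(\boldsymbol{\xi})-\boldsymbol{\xi}|^2 = |\mathbf{x}(\boldsymbol{\xi})|^2 - 2\,\mathbf{x}(\boldsymbol{\xi})\cdot\boldsymbol{\xi} + |\boldsymbol{\xi}|^2$ under the integral. With the pushforward constraint fixed, the first integrand depends only on the target measure and is thus constant across admissible maps, and the third depends only on the source, so minimizing $I_2$ is equivalent to maximizing $\int_{\Omega_c}\mathbf{x}(\boldsymbol{\xi})\cdot\boldsymbol{\xi}\,d\boldsymbol{\xi}$. Brenier's theorem then supplies existence by relaxation to Kantorovich plans (weak-$*$ compactness plus lower semicontinuity of the cost), uniqueness because absolute continuity of the source measure $d\boldsymbol{\xi}$ forces any optimal plan to concentrate on the graph of a map, and the crucial structural result that the optimizer is cyclically monotone with respect to the cost $c(\boldsymbol{\xi},\mathbf{x})=-\boldsymbol{\xi}\cdot\mathbf{x}$. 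Rockafellar's theorem on cyclically monotone sets then identifies the optimizer as $\mathbf{x}(\boldsymbol{\xi}) = \nabla_\xi P(\boldsymbol{\xi})$ for some convex potential $P$, unique up to an additive constant.

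Substituting $\mathbf{x}=\nabla_\xi P$ back into $\rho J = \theta$ converts the equidistribution condition into the Monge-Amp\`ere equation $\rho(\nabla_\xi P)\det(D^2_\xi P)=\theta$ on $\Omega_c$, with the non-standard boundary condition $\nabla_\xi P(\Omega_c)=\Omega_p$, and convexity of $P$ forces the Hessian $D^2 P$ to be positive semidefinite, so that $\Delta_\xi P=\mathrm{tr}(D^2P)>0$ wherever $\det(D^2P)>0$, which is everywhere by the equation. The regularity claim, that $\mathbf{x}$ inherits the regularity of $\rho$, is the main obstacle in any self-contained treatment: it is the substantial content of Caffarelli's regularity theory for Monge-Amp\`ere, relying on Alexandrov-solution machinery, strict convexity of $P$ over convex target domains, and a delicate analysis of sections of convex functions. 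For the purposes of this paper I would simply invoke \cite{Bren} and \cite{Caff} rather than reproduce these arguments.
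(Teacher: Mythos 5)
The paper offers no proof of this theorem---it is stated purely as a citation of Brenier and Caffarelli---and your sketch is a correct reconstruction of the standard argument behind those citations: the equidistribution condition is exactly the pushforward constraint, minimizing $I_2$ reduces to the quadratic-cost Monge problem, Brenier's theorem (via Kantorovich relaxation, cyclical monotonicity, and Rockafellar) gives existence, uniqueness, and the gradient-of-convex-potential structure, and Caffarelli's Monge--Amp\`ere regularity theory supplies the smoothness claim, which you rightly identify as the part that cannot be made self-contained. The one caveat worth recording is that Caffarelli's theory (under convexity of the domains and two-sided bounds on $\rho$) actually gives $\nabla P$ one degree more regularity than $\rho$, so the paper's phrase ``same regularity as $\rho$'' is an informal understatement rather than the sharp statement, but this does not affect your argument.
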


\vspace{0.1in}

\noindent It is immediate that if $\mathbf{x} = \nabla_{\xi}P$ then the Jacobian matrix ${\mathbf J}$ is {\em symmetric} and is the Hessian matrix of $P$, i.e. in two-dimensons
\begin{eqnarray}
\mathbf{J}=\mathbf{J}^T=\left[\begin{array}{cc} x_{\xi}& x_{\eta}\\ y_{\xi} & y_{\eta}\end{array}\right] =\left[\begin{array}{cc} P_{\xi\xi}& P_{\xi\eta}\\ P_{\eta\xi} & P_{\eta\eta}\end{array}\right]=:\mathbf{H}(P). \nonumber
\end{eqnarray}
Furthermore, the Jacobian determinant $ J$ is the Hessian determinant of $P$ such that in two-dimensional problems
\[
{J}=x_{\xi}y_{\eta}-x_{\eta}y_{\xi}=P_{\xi\xi}P_{\eta\eta}-P_{\xi\eta}^2:=H(P).
\]
The equidistribution condition (\ref{eqal2}) thus becomes 
\begin{equation}
\rho(\nabla P)  H(P) =\theta, \label{pmaeqa1}
\end{equation}
which is the  {\em Monge-Amp\`ere equation} (MA).  This fully nonlinear equation is generally augmented with Neumann or periodic boundary conditions, where the boundary of $\Omega_c$
is mapped to the boundary of $\Omega_p$ \cite{BW:06}, \cite{Finn}. However solutions have also been attained for non-standard boundary conditions \cite{froese2} and so it has the potential to be applied to more complex geometries. 
The gradient of $P$  thereby gives the unique map $\mathbf{x}$.
Methods to solve (\ref{pmaeqa1}) are described in \cite{BW:09},\cite{Finn}, \cite{cos1}, and form the basis of effective and robust mesh redistribution algorithms in two and three dimensions \cite{Pab}.
These methods have several advantages in practical applications. In particular, they only involve solving scalar equations, they deal naturally with
complex boundaries, and they can be easily coupled to existing software both for solving certain PDEs \cite{BW:06}, \cite{walsh} (see also Section 5) and also for approximating functions in operational data assimilation  codes \cite{culpic}.

\vspace{0.1in}

\noindent While these meshes satisfy the local scaling condition (\ref{eqal2}), regions where $\rho$ is large will result in small mesh elements and vice versa. However, it is not immediately clear what 
shape and orientation the elements inherit from (\ref{pmaeqa1}), although in \cite{Finn} it is shown these meshes minimise the global distortion as measured by the integral of 
\[
\mathrm{tr}(\mathbf{J^TJ})=\sigma_1^2+\sigma_2^2.
\]
We study this further here by seeking exact solutions of (\ref{pmaeqa1}) and the corresponding meshes.
To do this we use the following result:

\vspace{0.1in}

\begin{lemma} For a given scalar function $\rho(\mathbf{x})$, the solution of (\ref{pmaeqa1}) is unique, and the corresponding mesh has a unique metric tensor 
${\mathbf M}$, for which 
$$\rho = \sqrt{\det({\mathbf M})}.$$
\end{lemma}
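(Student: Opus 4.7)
The plan is to reduce the lemma to two ingredients already assembled in the excerpt: the Brenier--Caffarelli theorem (for uniqueness) and the Huang combined equidistribution--alignment identity (\ref{eqal4a}) (for the determinant relation). No new analysis of the Monge--Amp\`ere PDE is required; everything follows by unwinding definitions.

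First, I would dispatch uniqueness of the solution. Given $\rho(\mathbf{x})>0$, Theorem 1 (Brenier--Caffarelli) guarantees a unique (up to additive constants) convex mesh potential $P(\boldsymbol{\xi})$ whose gradient $\mathbf{x}=\nabla_{\xi}P$ realises the optimally equidistributed mapping. Since $P$ is unique up to a constant, its Hessian $\mathbf{H}(P)$ is uniquely determined, so the symmetric Jacobian $\mathbf{J}=\mathbf{H}(P)$ of the mesh map is unique at every point of $\Omega_{c}$, and hence so is the computed mesh.

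Next, I would construct $\mathbf{M}$ and verify its uniqueness. By the combined equidistribution and alignment identity (\ref{eqal4a}), an M-uniform mapping satisfies $\mathbf{J}^{T}\mathbf{M}\mathbf{J}=\theta I$; equivalently $\mathbf{M}=\theta\,\mathbf{J}^{-T}\mathbf{J}^{-1}=\theta\,\boldsymbol{\mathcal{M}}$. I would take this as the definition of the metric tensor associated with the OT mesh. Since $\mathbf{J}$ is uniquely determined from Step 1, the matrix $\mathbf{M}$ is uniquely determined pointwise, and it is symmetric positive definite because $\mathbf{J}$ is nonsingular.

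Finally, the determinant relation follows by a single line: taking determinants in $\mathbf{J}^{T}\mathbf{M}\mathbf{J}=\theta I$ gives $J^{2}\det(\mathbf{M})=\theta^{2}$, so $\sqrt{\det(\mathbf{M})}=\theta/J$. But the Monge--Amp\`ere equation (\ref{pmaeqa1}) is precisely the equidistribution condition $\rho J=\theta$, so $\theta/J=\rho$ and therefore $\rho=\sqrt{\det(\mathbf{M})}$, as required. The only mildly delicate point is that the lemma asserts uniqueness of \emph{the} metric tensor rather than merely existence of one such $\mathbf{M}$; this is not a real obstacle here because, unlike the alignment condition (\ref{eqal3}) alone, the paired identity (\ref{eqal4a}) determines $\mathbf{M}$ algebraically from $\mathbf{J}$, and symmetry of $\mathbf{J}$ (a hallmark of optimal transport) makes this inversion canonical.
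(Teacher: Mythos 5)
Your argument is correct and follows essentially the same route as the paper's own proof: uniqueness of $P$ from the Brenier--Caffarelli theorem gives a unique symmetric Jacobian $\mathbf{J}=\mathbf{H}(P)$, the metric is defined canonically by $\mathbf{M}=\theta\,\mathbf{J}^{-T}\mathbf{J}^{-1}$ via (\ref{eqal4a}), and taking determinants combined with $\rho J=\theta$ yields $\rho=\sqrt{\det(\mathbf{M})}$. Your remark on why uniqueness of $\mathbf{M}$ is automatic (the identity determines $\mathbf{M}$ algebraically from $\mathbf{J}$) is a slightly more explicit justification than the paper gives, but the substance is identical.
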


 \begin{proof} Given $\rho(\mathbf{x})$, it follows from Theorem 2.2 that the Monge-Amp\'ere equation (\ref{pmaeqa1}) has a unique solution $P$. Hence we may uniquely construct
the Jacobian matrix  ${\mathbf J} = {\mathbf H}(P)$ and metric tensor ${\mathbf M} = \theta {\mathbf J}^{-1} {\mathbf J}^{-T}.$ Since 
$J \sqrt{\det(M)} = \theta = \rho J$ from (\ref{pmaeqa1}), the result follows. \qquad\end{proof}

\vspace{0.1in}

\noindent We can calculate the explicit form of ${\mathbf M}$ as follows: Assume that we are considering problems in $R^n$.  Since ${\mathbf J}$ is {\em symmetric} its eigenvalues $\lambda_1,\lambda_2, \ldots , \lambda_n$ are equal to its singular values 
$\sigma_1,\sigma_2, \ldots, \sigma_n$ and
its (unit) eigenvectors ${\mathbf e}_1, {\mathbf e}_2 \ldots {\mathbf e}_n$ are orthogonal. The Jacobian can therefore be expressed in the form
$$
{\mathbf J} = \lambda_1 {\mathbf e}_1 {\mathbf e}_1^T + \lambda_2 {\mathbf e}_2 {\mathbf e}_2 ^T + \ldots +  \lambda_n {\mathbf e}_n {\mathbf e}_n ^T 
$$
implying $\rho={\theta}/{J}={\theta}/{\lambda_1 \lambda_2 \ldots \lambda_n}.$ It follows from (\ref{eqal4a}) that  the metric tensor ${\mathbf M}$ for which the mesh is M-uniform has the same (unit) orthogonal eigenvectors ${\mathbf e}_i$ 
and eigenvalues $\mu_i = {\theta}/{\lambda_i^2}$
and can be expressed in the form 
\begin{equation}
{\mathbf M} = \theta \left( \lambda_1^{-2}  {\mathbf e}_1 {\mathbf e}_1^T + \lambda_2^{-2}  {\mathbf e}_2 {\mathbf e}_2 ^T  + \ldots +  \lambda_n^{-2}  {\mathbf e}_n {\mathbf e}_n ^T \right).
\label{cbjac2}
\end{equation}
Observe that this metric tensor is not generally a scalar multiple of the identity matrix and differs from the Jacobian.

%%%%%%%%%%%%%%%%%%%%%%%%%%%%%%%%%%%%%%%%%%%%%%%%%%%%%%%%%%%%%%%%%%%%%%%%%%%%%%%%%%%%%%%%%%%%%%%%%%%%%%%%%
% SECTION 3 MESH ALIGNMENT TO LINEAR FEATURES MA
%%%%%%%%%%%%%%%%%%%%%%%%%%%%%%%%%%%%%%%%%%%%%%%%%%%%%%%%%%%%%%%%%%%%%%%%%%%%%%%%%%%%%%%%%%%%%%%%%%%%%%%%%

\section{Alignment to a linear feature}

\noindent In this section we consider how well the meshes generated by solving (\ref{pmaeqa1}) represent two-dimensional linear features, looking at the alignment, scaling, skewness and anisotropy of the
meshes constructed for both single shocks and for shocks meeting
orthogonally. These are prototypes of the more complex forms of shocks and fronts found in applications \cite{Tang},\cite{walsh}. Our study will
centre on certain exact solutions of (\ref{pmaeqa1}). To obtain these solutions we will consider simple domains with periodic boundary conditions.
Whilst clearly not representative of many applications, we can still use the results obtained as a good local description of the mesh close
to linear regions of more complex features in a more complex geometry.

\subsection{Construction of an exact map} 
Let the scalar density $\rho(\mathbf{x})$ take the form
\begin{eqnarray}
\rho(\mathbf{x})=\rho_1(\mathbf{x}\cdot \mathbf{e_1})\rho_2(\mathbf{x}\cdot \mathbf{e_2}):=\rho_1(x^{\prime})\rho_2(y^{\prime}).\label{rhoc}
\end{eqnarray}
where $\mathbf{e_1}=[a,b]^T, \quad 
\mathbf{e_2}=[-b, a]^T, \quad a^2 + b^2 = 1.$
Furthermore, assume that the periodic function $\rho_1$ is large when $\mathbf{x}\cdot \mathbf{e_1}=c$, and the periodic function $\rho_2$ is large when $\mathbf{x}\cdot \mathbf{e_2}=d$,  for given constants $c$, and  $d$, and that they are close  to $1$ otherwise.
Note that the solution of the equidistribution equation (\ref{eqal1}) would be expected to concentrate mesh points 
along the lines given by either of the conditions  $\mathbf{x}\cdot \mathbf{e_1}=c$, or $\mathbf{x}\cdot \mathbf{e_2}=d$. 

\vspace{0.1in}

\begin{theorem} If the scalar density $\rho(\mathbf{x})$ has the form given in (\ref{rhoc})
then the Monge Ampere equation can be solved exactly in a doubly periodic domain. For the 
resulting mapping  the uniquely derived metric tensor ${\mathbf M}$ satisfies
(\ref{cbjac2}), and the mesh aligns exactly along the linear features.
\end{theorem}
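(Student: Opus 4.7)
The plan is to exhibit an explicit solution of (\ref{pmaeqa1}) by separation of variables in the rotated frame adapted to $\mathbf{e}_1,\mathbf{e}_2$, and then read off the Jacobian and the metric tensor. Let $R=[\mathbf{e}_1\;\mathbf{e}_2]$ be the orthogonal matrix whose columns are the given unit vectors, and introduce rotated computational and physical coordinates $(\xi',\eta')=R^T\boldsymbol{\xi}$ and $(x',y')=R^T\mathbf{x}$. In these coordinates the density $\rho$ is, by assumption, the product $\rho_1(x')\rho_2(y')$, which strongly suggests the separable ansatz
\begin{equation*}
P(\boldsymbol{\xi}) = P_1(\xi') + P_2(\eta').
\end{equation*}
Under this ansatz $\nabla_\xi P = P_1'(\xi')\mathbf{e}_1+P_2'(\eta')\mathbf{e}_2$, so $x'=P_1'(\xi')$ and $y'=P_2'(\eta')$, and the Hessian $\mathbf{H}(P)$ is diagonal in the rotated frame with entries $P_1''(\xi'),P_2''(\eta')$. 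Substituting into (\ref{pmaeqa1}) the equation decouples as
\begin{equation*}
\bigl[\rho_1(P_1'(\xi'))\,P_1''(\xi')\bigr]\bigl[\rho_2(P_2'(\eta'))\,P_2''(\eta')\bigr]=\theta.
\end{equation*}

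Next I would split $\theta=\theta_1\theta_2$ and solve the two one-dimensional equidistribution problems $\rho_i(P_i')P_i''=\theta_i$ separately. Each of these is the classical 1D equidistribution identity $\rho_i(x_i')\,dx_i'=\theta_i\,d\xi_i'$, which on a periodic interval has a unique (up to a constant) solution provided $\theta_i$ is chosen as the average of $\rho_i$ over the relevant period. This consistent choice makes $P_1,P_2$ well defined and periodic-gradient, so $\mathbf{x}=\nabla_\xi P$ respects the doubly periodic boundary conditions and is convex since $P_1'',P_2''>0$. By Theorem 2.2 this candidate is in fact \emph{the} optimal transport map.

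Having the map in closed form, I would then directly verify the two remaining claims. The Jacobian $\mathbf{J}=\mathbf{H}(P)$ is, in the rotated frame, diagonal with entries $\lambda_1=P_1''(\xi'),\;\lambda_2=P_2''(\eta')$, so its eigenvectors in the original frame are exactly $\mathbf{e}_1,\mathbf{e}_2$. Lemma 2.3 then forces the unique metric tensor to be
\begin{equation*}
\mathbf{M}=\theta\bigl(\lambda_1^{-2}\mathbf{e}_1\mathbf{e}_1^T+\lambda_2^{-2}\mathbf{e}_2\mathbf{e}_2^T\bigr),
\end{equation*}
which is precisely the form (\ref{cbjac2}). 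The alignment statement is now immediate: the eigenvectors $\mathbf{e}_1,\mathbf{e}_2$ of $\mathbf{M}$ are orthogonal, and a linear feature along which $\rho_1$ is concentrated lies on a line of constant $x'$, i.e.\ parallel to $\mathbf{e}_2$, so one principal axis of the circumscribed ellipse is tangent to the feature and the other is normal to it; analogously for $\rho_2$.

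The routine part is the separation and the 1D solves. The main obstacle I anticipate is the boundary/periodicity bookkeeping: one must verify that the same constants $\theta_i$ that make the 1D equidistribution compatible with periodicity also reproduce the global normalisation $\theta$ in (\ref{thetadef}), and that the rotated domain together with the periodic identifications is itself consistent with the ansatz (for a general rotation one may need to work on a fundamental cell aligned with $\mathbf{e}_1,\mathbf{e}_2$, or to restrict to rotations compatible with the periodic lattice). Once this compatibility is in place, uniqueness from Theorem 2.2 and Lemma 2.3 closes the argument with no further calculation.
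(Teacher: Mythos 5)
Your proposal is correct and follows essentially the same route as the paper: the separable ansatz $P=F(\boldsymbol{\xi}\cdot\mathbf{e_1})+G(\boldsymbol{\xi}\cdot\mathbf{e_2})$, the factorisation $\theta=\theta_1\theta_2$, the reduction to two one-dimensional equidistribution problems whose constants are fixed by periodicity, and the identification of $\mathbf{M}$ via the eigendecomposition of the symmetric Jacobian. Your closing caveat about the rotation being compatible with the periodic lattice is a point the paper passes over silently (its examples use $\pi/4$), so flagging it is a small improvement rather than a deviation.
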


\vspace{0.2in}
 \begin{proof} To show this result we consider the case where $\Omega_c = \Omega_p = (0,1)^2$ and the solution to (\ref{pmaeqa1}) 
is a {\em doubly-periodic} map from $\Omega_c\rightarrow\Omega_p$, such that $\boldsymbol{\xi}=[\xi,\eta]\in\Omega_c$, $\mathbf{x}=[x, y]\in\Omega_p$. 
The value of  $\theta$ defined in (\ref{thetadef}) is calculated as below.
\\
\\ 
  \begin{lemma} 
If $\theta_1$ and $\theta_2$ are defined as follows
\begin{equation}
{
 \theta_1 = \int_0^1 \rho_1(s) \; ds, \quad \mbox{and} \quad \theta_2 = \int_0^1 \rho_2(s) \; ds. 
}
\label{theta1}
\end{equation}
then $\theta = \theta_1 \theta_2$.
\end{lemma}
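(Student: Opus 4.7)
The plan is to express $\theta$ as a single integral over the unit square, rotate coordinates so that the integrand becomes a pure product, and then exploit the periodicity of $\rho_1$ and $\rho_2$ to evaluate it via Fubini.

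First I would simplify the denominator in the definition (\ref{thetadef}): since $\Omega_c = (0,1)^2$, we have $\int_{\Omega_c} d\boldsymbol{\xi}=1$, so
\[
\theta \;=\; \int_{\Omega_p}\!\rho_1(\mathbf{x}\cdot\mathbf{e_1})\,\rho_2(\mathbf{x}\cdot\mathbf{e_2})\,d\mathbf{x} \;=\; \int_0^1\!\!\int_0^1 \rho_1(ax+by)\,\rho_2(-bx+ay)\,dx\,dy .
\]
I would then change variables to $x'=\mathbf{x}\cdot\mathbf{e_1}=ax+by$, $y'=\mathbf{x}\cdot\mathbf{e_2}=-bx+ay$. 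Because $\{\mathbf{e_1},\mathbf{e_2}\}$ is orthonormal, the transformation is a rigid rotation with $|\det|=a^2+b^2=1$, so $dx\,dy = dx'\,dy'$. The integrand becomes the clean product $\rho_1(x')\rho_2(y')$ but now integrated over the rotated unit square $R$.

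The key step is then showing $\int_R \rho_1(x')\rho_2(y')\,dx'dy' = \theta_1\theta_2$. The naïve hope is that $R$ is a fundamental domain for $\mathbb{Z}^2$ so Fubini immediately gives the product. This is false in general, so the argument must use that both $\rho_1$ and $\rho_2$ are periodic (on whatever common period makes $\rho$ doubly periodic on the original $(0,1)^2$, as required for the problem to make sense). The clean way is via the mean-value characterization: for any function $g$ that is doubly periodic on a lattice with unit-area fundamental domain, $\int_{[0,1]^2} g = \lim_{L\to\infty} L^{-2}\int_{[0,L]^2} g$, and this mean value is invariant under rigid rotations of the domain of averaging. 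Applying this to $\rho$ in the original variables and to $\rho_1(x')\rho_2(y')$ in the rotated variables, the two averages agree, and the rotated one separates by Fubini into
\[
\left(\lim_{L\to\infty}\frac{1}{L}\int_0^L\!\rho_1(x')\,dx'\right)\!\left(\lim_{L\to\infty}\frac{1}{L}\int_0^L\!\rho_2(y')\,dy'\right) \;=\; \theta_1\,\theta_2 ,
\]
using the periodicity of $\rho_1$ and $\rho_2$ to reduce each one-dimensional mean to a single-period integral.

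The main obstacle is exactly this periodicity-to-rotated-domain step: the rotated unit square is not a fundamental domain for the standard lattice, so one cannot just "rotate the region of integration back." The mean-value device above sidesteps that by computing the same quantity in two different ways; an equivalent alternative is to expand $\rho_1$ and $\rho_2$ in Fourier series and check that after rotation only the $(0,0)$ mode survives integration over $[0,1]^2$, reproducing the constant term $\theta_1\theta_2$. Either route completes the proof of Lemma~2.4.
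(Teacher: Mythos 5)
Your proposal is correct and follows the same route as the paper's proof: cancel the unit denominator, rotate to the coordinates $(x',y')$ with unit Jacobian, and then separate the product by Fubini. The difference is at the one step that actually requires an argument. The paper simply writes that ``from double-periodicity of $\rho$'' the integral over the rotated image of $(0,1)^2$ equals $\int_0^1\!\int_0^1 \rho_1(x')\rho_2(y')\,dx'\,dy'$, whereas you correctly point out that the rotated square is not a fundamental domain for the lattice and patch this with the large-square averaging argument (equivalently, the Fourier-mode argument); either repair is valid and makes rigorous exactly what the paper leaves implicit. One caveat, which you inherit from the lemma's statement rather than introduce yourself: your final identification of the one-dimensional infinite-volume means with $\theta_1=\int_0^1\rho_1(s)\,ds$ and $\theta_2=\int_0^1\rho_2(s)\,ds$ holds only if $\rho_1$ and $\rho_2$ are $1$-periodic in their rotated arguments. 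For a feature at a genuine angle, compatibility of $\rho_1(\mathbf{x}\cdot\mathbf{e_1})\rho_2(\mathbf{x}\cdot\mathbf{e_2})$ with double periodicity on $(0,1)^2$ forces $\rho_1$ to have period $|a|$ (e.g.\ $1/\sqrt2$ in the paper's Example~1), and what your averaging argument actually yields is the product of the per-period means of $\rho_1$ and $\rho_2$; these coincide with the integrals over $[0,1]$ only when the periods divide $1$. That per-period-mean normalization is the one the paper tacitly uses in its examples (where $\theta_1=\theta=3$), so your proof proves the statement the authors intend, and is more careful than theirs at the only delicate point.
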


\vspace{0.1in}

 \begin{proof} By the definition in (\ref{thetadef}) 
$$\theta = \int_{\Omega_p} \rho({\mathbf x})  \; dx/\int_{\Omega_c} \; d\xi = \int_0^1 \int_0^1 \rho_1({\mathbf x}\cdot{\mathbf e}_1) \rho_2({\mathbf x}\cdot{\mathbf e}_2) \; dx dy/ \int_0^1 \int_0^1 \; d\xi d\eta.$$
Introducing coordinates $x' = {\mathbf x}\cdot{\mathbf e}_1 and \quad y'={\mathbf x}\cdot{\mathbf e}_2$, since $\mathbf{e_1}$ and $\mathbf{e_2}$ are orthonormal it follows immediately that $dx \; dy = dx' \; dy'$, so from double-periodicity of $\rho$  we have
$$\theta = \int_0^1 \int_0^1 \rho_1(x') \rho_2(y') \; dx' dy' =  \int_0^1 \rho_1(x') \; dx'  \int_0^1 \rho_2(y') \; dy' = \theta_1 \theta_2.$$ \qquad\end{proof}

\vspace{0.1in}

\noindent It follows that the Monge-Amp\`ere equation (\ref{pmaeqa1}) can be expressed in the form
\begin{eqnarray}
H(P) \;  \rho_1(x^{\prime}) \rho_2(y^{\prime}) = \theta_1 \theta_2. \label{MAls}
\end{eqnarray}
Fortuitously, this fully nonlinear PDE is separable and has an exact solution, from which we can calculate the mesh, the metric tensor and the skewness $Q_s$.

\vspace{0.2in}

\begin{lemma}For appropriate periodic functions $F(t)$ and $G(t)$ given by the solution of (\ref{EVfg}), there exists a doubly-periodic, separable solution to (\ref{MAls}) of the form
\begin{equation}
P(\xi,\eta)=F(\boldsymbol{\xi}\cdot\mathbf{e_1})+G(\boldsymbol{\xi}\cdot\mathbf{e_2}).\label{VSS}
\end{equation}
Furthermore, this solution is unique up to an arbitrary constant of addition.
\end{lemma}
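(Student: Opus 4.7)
The plan is to substitute the separable ansatz directly into (\ref{MAls}), exploit the orthonormality of $\mathbf{e_1},\mathbf{e_2}$ to collapse the determinant, and then integrate the resulting decoupled ODEs.

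First I would set $u=\boldsymbol{\xi}\cdot\mathbf{e_1}$ and $v=\boldsymbol{\xi}\cdot\mathbf{e_2}$ and compute $\nabla_{\boldsymbol{\xi}}P = F'(u)\mathbf{e_1}+G'(v)\mathbf{e_2}$. Taking a second derivative gives $\mathbf{H}(P)=F''(u)\,\mathbf{e_1}\mathbf{e_1}^T+G''(v)\,\mathbf{e_2}\mathbf{e_2}^T$, and a short computation using $a^2+b^2=1$ (so that $\{\mathbf{e_1},\mathbf{e_2}\}$ is orthonormal) gives
\[
H(P)=P_{\xi\xi}P_{\eta\eta}-P_{\xi\eta}^{2}=(a^{2}+b^{2})^{2}\,F''(u)\,G''(v)=F''(u)\,G''(v).
\]
At the same time, $\mathbf{x}\cdot\mathbf{e_1}=F'(u)$ and $\mathbf{x}\cdot\mathbf{e_2}=G'(v)$, so in the rotated coordinates $x'=F'(u)$ and $y'=G'(v)$.

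Substituting these into (\ref{MAls}) produces
\[
F''(u)\,G''(v)\,\rho_{1}(F'(u))\,\rho_{2}(G'(v))=\theta_{1}\theta_{2},
\]
which separates cleanly; equating factors of $u$ and of $v$ yields the two ODEs referenced as (\ref{EVfg}), namely $\rho_{1}(F'(u))F''(u)=\theta_{1}$ and $\rho_{2}(G'(v))G''(v)=\theta_{2}$. These are exact derivatives: defining the strictly increasing antiderivatives $\Phi_{i}(s)=\int_{0}^{s}\rho_{i}(t)\,dt$, integration gives $\Phi_{1}(F'(u))=\theta_{1}u+c_{1}$ and $\Phi_{2}(G'(v))=\theta_{2}v+c_{2}$, which we invert (using $\rho_i>0$) to obtain well-defined $F',G'$ and hence $F,G$.

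The main obstacle is checking that the resulting $P$ actually furnishes a valid doubly-periodic mesh map, which amounts to verifying $F'(u+1)=F'(u)+1$ and $G'(v+1)=G'(v)+1$ (and, for convexity of $P$, that $F''>0$, $G''>0$). Convexity is immediate from the ODE since $\rho_i,\theta_i>0$. For the periodicity condition, the definition of $\theta_{i}$ in (\ref{theta1}) together with the $1$-periodicity of $\rho_{i}$ gives $\Phi_{i}(s+1)-\Phi_{i}(s)=\theta_{i}$ for all $s$; so $F'(u)+1$ solves $\Phi_{1}(\cdot)=\theta_{1}(u+1)+c_{1}$, which is precisely the defining equation for $F'(u+1)$, and similarly for $G'$. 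Thus $\mathbf{x}(\boldsymbol{\xi})-\boldsymbol{\xi}$ is doubly periodic as required.

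Finally, uniqueness follows from Theorem 2.2: the Monge-Amp\`ere problem with this $\rho$ has a unique convex potential modulo constants, and the separable $P$ just constructed is convex and satisfies (\ref{MAls}), so it must agree with the Brenier potential up to an additive constant. The additive constants of integration for $F$ and $G$ individually collapse to a single additive constant for $P$, while the horizontal shifts $c_1,c_2$ are pinned down by the translation-fixing normalisation of the optimal transport map on the torus.
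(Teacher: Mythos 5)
Your proposal is correct and follows essentially the same route as the paper: substitute the separable ansatz, use orthonormality of $\mathbf{e_1},\mathbf{e_2}$ to reduce the Hessian determinant to $F''G''$, identify $x'=F'$ and $y'=G'$ so the equation separates into the ODEs (\ref{EVfg}), integrate via the antiderivatives of $\rho_1,\rho_2$, fix the constants by the periodicity of the map, and invoke uniqueness of the periodic Monge--Amp\`ere problem. The only cosmetic differences are that the paper carries an explicit separation constant $\alpha$ and determines $\alpha=1$ from the endpoint conditions $x'(0)=0$, $x'(1)=1$ (your periodicity check $F'(u+1)=F'(u)+1$ forces the same conclusion), and it cites Loeper--Rapetti rather than Theorem 2.2 for uniqueness.
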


\vspace{0.1in}

\begin{proof} Differentiating (\ref{VSS}) with respect to $\xi$ and $\eta$ gives
\begin{eqnarray}
\mathbf{x}=\nabla_{\xi}P=\mathbf{e_1}^TF^{\prime}+\mathbf{e_2}^TG^{\prime}.\label{mapls}
\end{eqnarray}
Differentiating again with respect to $\xi$ and $\eta$ we obtain
\begin{equation}
P_{\xi\xi} = a^2 F^{\prime\prime}+b^2 G^{\prime\prime}, \quad 
P_{\xi\eta } = ab F^{\prime\prime}-ab G^{\prime\prime}, \quad
P_{\eta\eta} = b^2 F^{\prime\prime}+a^2 G^{\prime\prime}.\nonumber
\end{equation}
Hence 
\[
\mathbf{H}(P) =\left[\begin{array}{cc}\mathbf{e_1}&\mathbf{e_2} \end{array}\right]\left[\begin{array}{cc}F^{\prime\prime}&0 \\ 0&G^{\prime\prime}\end{array}\right]\left[\begin{array}{c}\mathbf{e_1}^T\\ \mathbf{e_2}^T\end{array}\right]
\]
and
\begin{eqnarray}
H(P)&=&(a^2F^{\prime\prime}+b^2G^{\prime\prime})(b^2F^{\prime\prime}+a^2G^{\prime\prime})-(abF^{\prime\prime}-abG^{\prime\prime})^2\nonumber\\
&=&(b^2+a^2)^2F^{\prime\prime}G^{\prime\prime}=F^{\prime\prime}G^{\prime\prime}.\label{HessVSS}
\end{eqnarray}
Substituting (\ref{HessVSS}) into the Monge-Amp\`ere equation (\ref{MAls}) we obtain
\[
F^{\prime\prime}(\xi^{\prime})G^{\prime\prime}(\eta^{\prime}) \; \rho_1(x') \rho_2(y') = \theta_1 \theta_2,
\]
where $\xi' = \boldsymbol{\xi}\cdot\mathbf{e_1}$ and $\eta' = \boldsymbol{\xi}\cdot\mathbf{e_2}.$
Now by (\ref{mapls}) it follows that
\[
x^{\prime}=\mathbf{x}\cdot \mathbf{e_1}=\mathbf{e_1}^T\cdot \mathbf{e_1}F^{\prime}+\mathbf{e_2}^T\cdot \mathbf{e_1}G^{\prime}=F^{\prime}(\xi^{\prime}),
y^{\prime}=\mathbf{x}\cdot \mathbf{e_2}=\mathbf{e_1}^T\cdot \mathbf{e_2}F^{\prime}+\mathbf{e_2}^T\cdot \mathbf{e_2}G^{\prime}=G^{\prime}(\eta^{\prime}).
\]
Thus, there is a solution of (\ref{MAls}) of the form (\ref{VSS}) provided $F$ and $G$ satisfy 
\begin{equation}
{
F^{\prime\prime}(\xi^{\prime}) \rho_1(F'(\xi')) =\theta_1 \alpha \quad \mbox{and} \quad  G^{\prime\prime}(\eta^{\prime}) \rho_2(G'(\eta')) =\theta_2/\alpha, 
}
\label{EVfg}
\end{equation}
where $\alpha$ is (at this stage) an arbitrary constant. From the identities $x' = F'$ and $y' = G'$ it follows that 
$x'(\xi') \rho_1(x'(\xi')) = \theta_1 \alpha$ and for a suitable constant $c_1$, $R_1(x') \equiv \int_0^{x'}  \rho_1(s) \; ds = \theta_1 \alpha \;  \xi' + c_1.$
Since the map from $\Omega_c$ to $\Omega_p$  is doubly periodic, 
$x'(0) = 0$ and $x'(1) = 1$. Thus, $c_1 = 0$ and from the definition of $\theta_1$, $\alpha = 1$. 
Hence, we have
\begin{equation}
{
x' = {\mathbf x}\cdot{\mathbf e}_1 = R_1^{-1}(\theta_1 \; \xi') = R_1^{-1}(\theta_1 \; {\boldsymbol{\xi}}\cdot{\mathbf e}_1).
}
\label{xstuff}
\end{equation}
A similar identity follows for  $y'$ with related function $R_2$ and constant $c_2$, giving
\begin{equation}
{
 y' = {\mathbf x}\cdot{\mathbf e}_2 = R_2^{-1}(\theta_2 \; \eta') = R_2^{-1}(\theta_2 \; {\boldsymbol{\xi}}\cdot{\mathbf e}_2).
 }
 \label{ystuff}
\end{equation}
These define the functions $F$ and $G$, and
the uniqueness (\ref{VSS}) follows from the uniqueness of solutions of the Monge-Amp\`ere equation (\ref{MAls}) 
with periodic boundary conditions \cite{Loeper}. \qquad\end{proof}

\vspace{0.2in}

\noindent We now calculate the Jacobian of the map ${\mathbf J}$ and the metric tensor ${\mathbf M}$. Note that
$$
\mathbf{x}=\nabla_{\xi}P=\mathbf{e_1}^T R_1^{-1}(\theta_1 \xi^{\prime})+\mathbf{e_2}^T R_2^{-1}(\theta_2 \eta^{\prime})
$$
and 
\begin{equation}
{
{\mathbf J} = \frac{\theta_1}{\rho_1(F^\prime(\xi^\prime))}\; {\mathbf e_1}\;{\mathbf e_1^T} +\frac{\theta_2}{\rho_2(G^\prime(\eta^\prime))} \;{\mathbf e_2}\;{\mathbf e_2^T}
}
\label{jacex34}
\end{equation}
with eigen/singular values 
\begin{equation}
\lambda_1 =  \theta_1/\rho_1, \hspace{.2cm}\mbox{and}\hspace{.2cm} \lambda_2=\theta_2/\rho_2.
\label{spotty1}
\end{equation}
From (\ref{eqal4a}), we infer that the mesh will be aligned to the metric
\begin{equation}
{\mathbf M} = \frac{\theta_2  \rho_1^2 }{\theta_1} \; {\mathbf e_1}\;{\mathbf e_1^T} +\frac{\theta_1 \rho_2^2}{\theta_2} \;{\mathbf e_2}\;{\mathbf e_2^T},
\label{jacex35c}
\end{equation}
with eigenvalues
\begin{equation}
\mu_1 ={\theta_2 \rho_1^2}/{\theta_1} \quad \mbox{and} \quad \mu_2 ={\theta_1 \rho_2^2}/{\theta_2}.
\label{spotty2}
\end{equation}
%This metric tensor can be expressed in the equivalent form
%\begin{eqnarray}
%\mathbf{M}&=&\frac{\theta_1\rho_2^2}{\theta_2}[[I+\frac{\theta_2^2\rho_1^2}{\theta_1^2\rho_2^2}-1]\mathbf{e_1}\mathbf{e_1}^T].\label{metex3}
%\end{eqnarray}

\noindent These explicit forms for ${\mathbf J}$ and ${\mathbf M}$ reveal the alignment properties
of the map. Specifically, the eigendecomposition of ${\mathbf J}$ in (\ref{jacex34})
shows that the semi-axes of the ellipses described in Section 2  are parallel to
${\mathbf e_1}$ and ${\mathbf e_2}$ and thus align with the linear features.
The linear features we are aiming to represent arise when ${\mathbf x}\cdot{\mathbf e_1} = x' =  c$ and ${\mathbf x}\cdot{\mathbf e_2} = y' =  d$ so that 
respectively either $\rho_1$ is large and $\rho_2$ is not, or $\rho_2$  is large
and $\rho_1$ is not.  This ends the proof of Theorem 3.1. \qquad\end{proof}

\noindent We can also study the mesh away from the features.

\vspace{0.2in}

\begin{corollary}
Away from the linear features the mesh is in general isotropic and its skewness is given explicitly by 
\begin{equation}
Q_s = \frac{1}{2} \left( \frac{\theta_1 \rho_2}{\theta_2 \rho_1} + \frac{\theta_2 \rho_1}{\theta_1 \rho_2} \right) 
\label{cskew}
\end{equation}
\end{corollary}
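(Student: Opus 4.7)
The plan is to get the skewness formula by direct substitution, since all the ingredients have just been computed in the proof of Theorem 3.1. The symmetric Jacobian given in (\ref{jacex34}) has spectral decomposition in the orthonormal basis $\{\mathbf{e}_1,\mathbf{e}_2\}$, so its singular values coincide with (the absolute values of) its eigenvalues. From (\ref{spotty1}) we therefore have
\[
\sigma_1=\frac{\theta_1}{\rho_1(x')}, \qquad \sigma_2=\frac{\theta_2}{\rho_2(y')}.
\]
Plugging these into the definition (\ref{qske}) of the skewness $Q_s=\tfrac{1}{2}(\sigma_1/\sigma_2+\sigma_2/\sigma_1)$ gives (\ref{cskew}) immediately after cancelling common factors.

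For the isotropy statement, I would argue from the structure of $\mathbf{M}$ in (\ref{jacex35c}). By hypothesis, away from the features $\rho_1(x')$ and $\rho_2(y')$ are both close to $1$, so $\sigma_1/\sigma_2\approx \theta_1/\theta_2$, a fixed global constant independent of position. When the two features are roughly balanced (so that $\theta_1\approx\theta_2$), this ratio is close to unity and the circumscribed ellipse of a mesh element is approximately a circle, i.e. the mesh is locally isotropic. More generally, the shape of an element away from the features is a fixed ellipse independent of $\mathbf{x}$, so no local stretching in any particular direction is produced there.

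There is no real obstacle in this argument: once the eigendecompositions (\ref{jacex34}) and (\ref{jacex35c}) are in hand, the corollary is purely algebraic substitution combined with the assumed localisation of $\rho_1$ and $\rho_2$. The only point requiring a little care is to recognise that the symmetry of $\mathbf{J}$ lets one identify singular values with eigenvalues without having to take absolute values, since $\rho_1,\rho_2>0$ and the mesh map is orientation-preserving, ensuring both $\lambda_i>0$.
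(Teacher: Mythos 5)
Your proposal is correct and follows essentially the same route as the paper: the paper's proof is exactly the direct substitution of the eigen/singular values $\lambda_1=\theta_1/\rho_1$, $\lambda_2=\theta_2/\rho_2$ from (\ref{spotty1}) into the skewness formula (\ref{qske}), followed by the observation that away from the features $\rho_1,\rho_2={\cal O}(1)$ so the ratio is governed by the global averages $\theta_1/\theta_2$, which is generically of order one. Your additional remark about symmetry of $\mathbf{J}$ allowing identification of singular values with eigenvalues is consistent with the paper's setup and adds nothing that conflicts with it.
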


\vspace{0.1in}

\noindent \begin{proof}  Substituting the expressions from our explicit solution into (\ref{qske}) gives
\begin{equation}
Q_s = \frac{1}{2} \left( \frac{\theta_1 \rho_2}{\theta_2 \rho_1} + \frac{\theta_2 \rho_1}{\theta_1 \rho_2} \right) 
\label{cskew1}
\end{equation}
The value of $Q_s$ depends upon  the relative size of the density functions $\rho_1$ and $\rho_2$, both locally and globally. \qquad\end{proof}

\noindent Along the linear features, where either $\rho_1 \gg1$ and $\rho_2={\cal O}(1)$, or $\rho_2 \gg 1$ and $\rho_1={\cal O}(1)$, the mesh elements will be anisotropic and skew. Away from the linear feature, where $\rho_1$ and $\rho_2$ are both of order one, the degree of anisotropy and skewness is controlled by the relative values of the density functions in the entire domain, $\theta_1$ and $\theta_2$. As these are averaged quantities the ratio is again in general of order one. We give precise estimates presently
for these in two examples below.

\subsection{Examples}
We now consider two specific analytical examples which illustrate the theory described above.
\subsubsection{Example 1: A single periodic shock}
As a first example we consider a periodic array of linear features aligned at $\pi/4$ to the coordinate axes so that ${\mathbf e_1}^T=(1 \; 1)/\sqrt{2}$
and ${\mathbf e}_2^T=(1 \; -1)/\sqrt{2}.$  As a periodic mesh density we take
\begin{equation}
\rho(\mathbf{x})=1+\alpha\sum_{n=-\infty}^{\infty} \mathrm{sech}^2(\alpha(\sqrt{2}x^{\prime}-n)):=\rho_1(x^{\prime}), \quad x' = {\mathbf x}\cdot{\mathbf e_1}, \label{ldf}
\end{equation}
with $\alpha=50$. This density is concentrated along a set of lines of width $1/50\sqrt{2}$ which are parallel to ${\mathbf e}_2$, one of which passes through the coordinate origin,
and the others arising when $x'=\pm 1/\sqrt{2}, \pm 2/\sqrt{2}, \ldots$. Note that along each such line $\rho = 51 + {\cal O}(\exp(-50))$ and away from each such line  $\rho = 1 + {\cal O}(\exp(-50)).$
A direct calculation gives
%\begin{equation}
%{
$$\theta=\int_{\Omega_p}\rho(\mathbf{x}) \;  d\mathbf{x}= 3 + {\cal O}(\exp(-50)),$$
%}
%\label{thec}
%end{equation}
%Similarly
%\[
and
$$
R_1(x^{\prime})=x^{\prime}+\frac{1}{\sqrt{2}}\sum_{n=-\infty}^{\infty}[\tanh(50(\sqrt{2}x^{\prime}-n))-\tanh(-50n)].
%\]
$$
The inverse of $R_1$ can be computed by fitting a spline through the data points $(R_1(x^{\prime}_i), \hspace{.2cm}x^{\prime}_i)$, for $x^{\prime}_i=\sqrt{2}i/N'$, $i=0,...,N'$. A plot of $R_1^{-1}$ is given in Fig. \ref{R1} for $N'=1000$. 
\begin{figure}[hhhhhhhhhhhhhhhhhhhhhhhhhhhhhhhhhhhhhhhhhhhhhh]
\begin{center}\includegraphics[height=4.5cm,width=5.5cm]{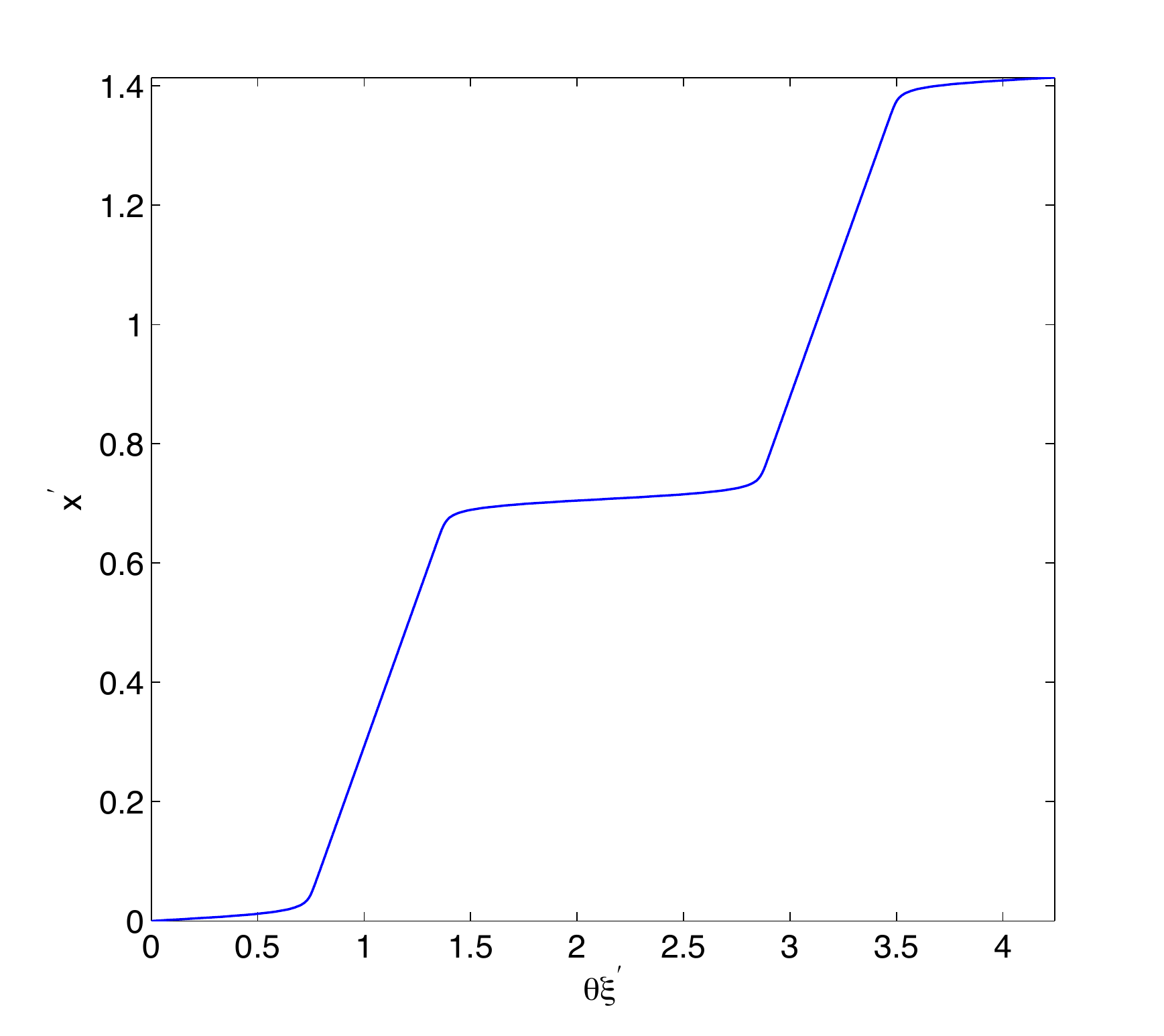}\end{center}
\caption{The function $R_1^{-1}$ for Case 1, $\theta=3 + {\cal O}(\exp(-50))$.}
\label{R1}
\end{figure}
Observe that this function is very flat close to $x'=0,1/\sqrt{2},\sqrt{2}$, and mesh points will be
concentrated at these values.
It follows immediately that  $\theta_1 = \theta, \theta_2 = 1, R_2(y') = y',$ and also
$$\xi' = (\xi + \eta)/\sqrt{2}, \quad \eta'=(\xi-\eta)/\sqrt{2}, \quad x = (x'+y')/\sqrt{2}, \hspace{.1cm}\mbox{and} \hspace{.1cm} y=(x'-y')/\sqrt{2}.$$
Therefore, from (\ref{xstuff}) and (\ref{ystuff}) it follows that
$$
x=\frac{1}{\sqrt{2}}[R_1^{-1}(\theta(\xi +\eta)/\sqrt{2})-((-\xi +\eta)/\sqrt{2})], \quad 
y=\frac{1}{\sqrt{2}}[R_1^{-1}(\theta(\xi +\eta)/\sqrt{2})+((-\xi +\eta)/\sqrt{2})].
$$
A plot of the resulting mesh is shown in Fig. \ref{shocke1}(a) with a close-up in Fig. \ref{shocke1}(b). This mesh is the image of a uniform square computational mesh and has the 
points $(x(\xi_i,\eta_j),y(\xi_i,\eta_j))$, where $\xi_j=\eta_j=j/(n-1)$, for $i,j=0,...,N-1$ and $N=60$.
\begin{figure}[hhhhhhhhhhhhhhhhhhhhhhhhhhhhhhhhhhhhhhhhhhhhhhhhhhhhhhhhhhhhhhhhhhhhhhhhhhhhhhhhhhhhhhhhhhhh]
\begin{center}\includegraphics[height=5.5cm,width=6cm]{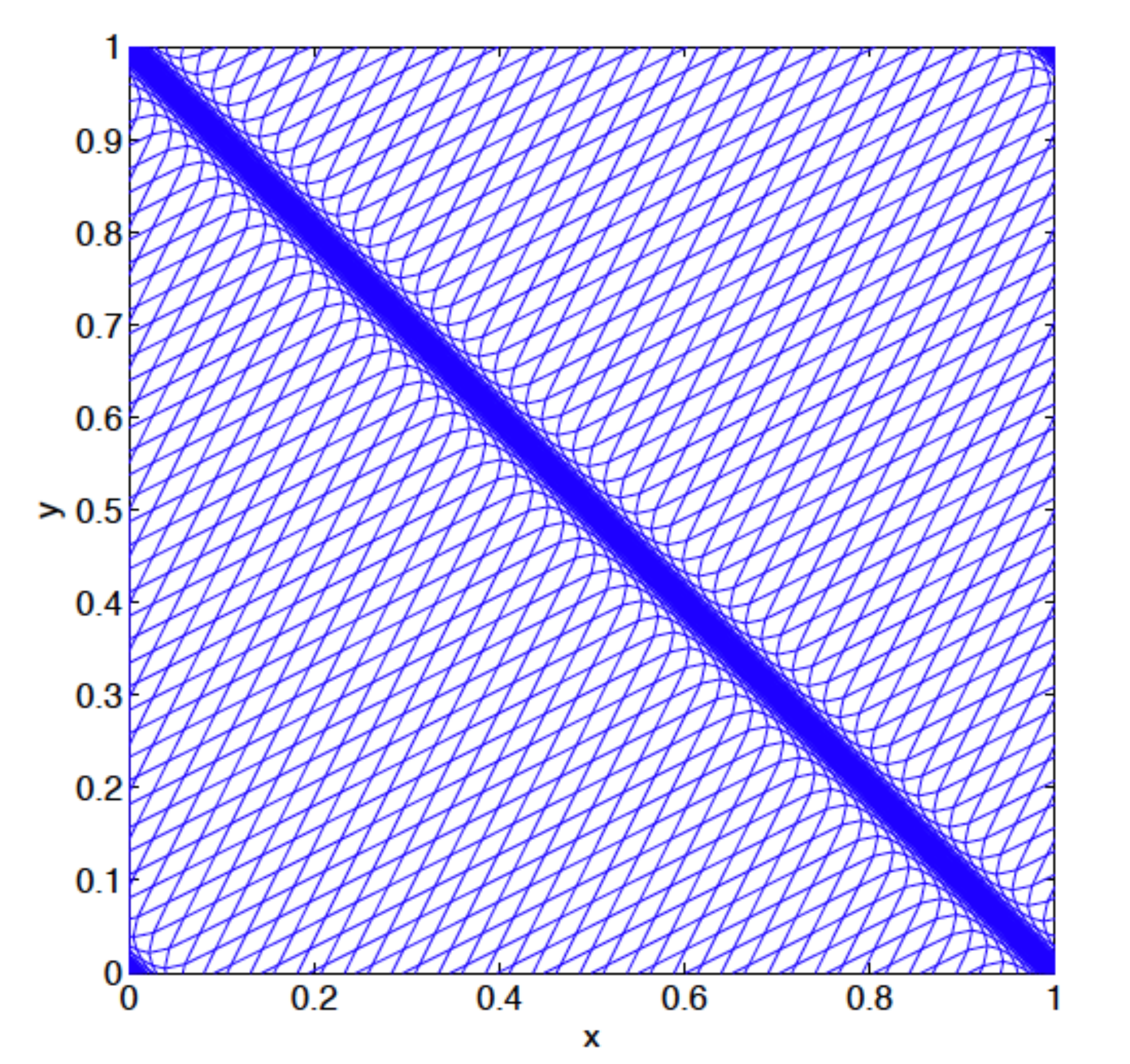}\includegraphics[height=5.5cm,width=6cm]{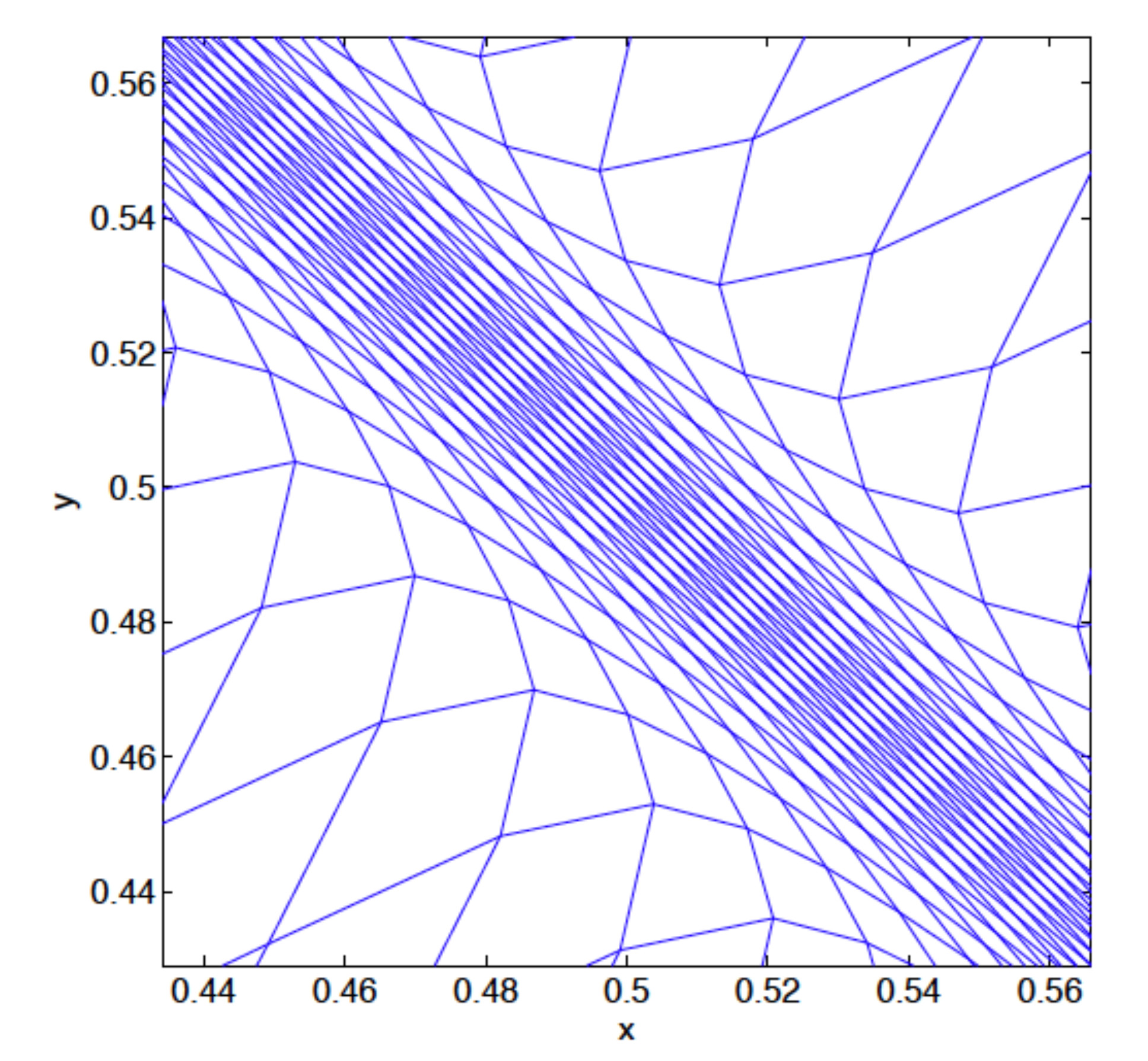}
\caption{(Left) A $(60\times60)$ mesh generated from the analytical solution of the Monge-Amp\`ere equation for the density function in Case 1. (Right) A zoom of the region along the shock where the density function is large.}\label{shocke1}\end{center}
\end{figure}
We see that not only is the mesh concentrated along the linear features parallel to ${\mathbf e}_2$ but it is also closely
aligned with this vector. Away from the linear feature the mesh has a distinctive diamond shape, with each diamond of uniform size and with axes  in the directions
${\mathbf e}_1$ and ${\mathbf e}_2$. The close-up shows the diamonds stretched along the linear feature and then smoothly evolving into 
uniform diamonds. The skewness of the mesh can be calculated directly from the Jacobian.  The eigenvalues of ${\mathbf J}$ (which coincide with the singular values) 
are given from (\ref{spotty1}) by $\lambda_1 = \theta/\rho$ and $\lambda_2 = 1$. Ignoring exponentially small terms, we have $\lambda_1 =  3/51$ within the linear feature, and $\lambda_1 = 3$
away from the linear feature, implying that the skewness measure $Q_s$ in (\ref{qske}) is given by
$Q_s = 8.529$ within the linear feature and $Q_s = 1.667$ outside the  linear feature. Although the specific example given here is not very anisotropic, extremely anisotropic meshes, whilst simple to compute, are difficult to visualise.
\noindent \begin{lemma} A mesh generated by solving (MA) (\ref{pmaeqa1}), with a density function of the form (\ref{ldf}), concentrates mesh points along a set of lines of width $\epsilon=1/\alpha\sqrt{2}$, where the mesh is anisotropic with skewness measure $Q_s$ (\ref{cskew}) inversely proportional to $\epsilon$.
\end{lemma}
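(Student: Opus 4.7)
The plan is to specialize Theorem 3.1 to the density (\ref{ldf}) and then to read off both claims from the explicit Jacobian in (\ref{spotty1}) together with the skewness formula (\ref{cskew}) of Corollary 3.2. The density (\ref{ldf}) is already of the separable form (\ref{rhoc}) with $\rho_2\equiv 1$, $\mathbf{e_1}=(1,1)^T/\sqrt{2}$, $\mathbf{e_2}=(1,-1)^T/\sqrt{2}$ and $\theta_2=1$. Writing $x'=n/\sqrt{2}+z$ near the $n$th peak, the relevant $\mathrm{sech}^2$ factor becomes $\mathrm{sech}^2(\alpha\sqrt{2}\,z)$, which is exponentially small once $|z|\gg\epsilon=1/(\alpha\sqrt{2})$ and equals $1$ at $z=0$. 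Hence each peak of $\rho_1$ occupies an $x'$-strip of halfwidth $\epsilon$, with $\rho_1=1+\alpha+O(e^{-\alpha/2})$ at the centre and $\rho_1=1+O(e^{-\alpha/2})$ outside the strips.

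For the first claim (concentration) I would invoke the equidistribution identity $\rho_1 J=\theta$ coming from (\ref{pmaeqa1}) together with the fact from (\ref{spotty1}) that $\lambda_2=1$ throughout: inside a peak strip the Jacobian is $J=\theta/(1+\alpha)=O(\theta\epsilon)$, so a uniform computational cell is mapped to a physical cell compressed by a factor $\epsilon$ in the $\mathbf{e_1}$ direction while preserving its $\mathbf{e_2}$-extent. Moreover the number of mesh cells lying inside the union of peak strips is proportional to $\int_{\text{strips}}\rho_1\,d\mathbf{x}/\theta$, and a change of variables $s=\alpha\sqrt{2}\,z$ shows that $\alpha\int_{-\epsilon}^{\epsilon}\mathrm{sech}^2(\alpha\sqrt{2}\,z)\,dz=(1/\sqrt{2})\int_{-1}^{1}\mathrm{sech}^2(s)\,ds$ is an $O(1)$ positive constant, independent of $\alpha$. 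Thus a definite fraction of the $N^2$ mesh points is captured by the parallel lines $\{x'=n/\sqrt{2}\}$ with effective width $\epsilon$ and aligned with $\mathbf{e_2}$.

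For the second claim (skewness), (\ref{cskew}) reduces with $\theta_2=\rho_2=1$ and $\theta_1=\theta$ to
\[
Q_s\;=\;\tfrac{1}{2}\!\left(\frac{\theta}{\rho_1(x')}+\frac{\rho_1(x')}{\theta}\right).
\]
Inside a peak strip $\rho_1\sim\alpha=1/(\sqrt{2}\,\epsilon)$, giving $Q_s\sim\alpha/(2\theta)=1/(2\sqrt{2}\,\theta\,\epsilon)$, which is the claimed inverse proportionality in $\epsilon$; outside the strips $\rho_1\sim1$ and $Q_s\sim(\theta+\theta^{-1})/2=O(1)$, so the anisotropy is genuinely localised to the strips.

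The only non-routine step is verifying that $\theta$ itself remains $O(1)$ and bounded away from zero as $\alpha\to\infty$, since a degenerating $\theta$ would spoil the $\epsilon$-scaling of the prefactor in $Q_s$. This follows from the same change of variables applied globally: $\theta=\theta_1=1+\alpha\int_0^1\sum_n\mathrm{sech}^2(\alpha(\sqrt{2}x'-n))\,dx'$ captures only the finitely many peaks overlapping $[0,1]$, each contributing $\alpha\cdot O(1)\cdot(1/(\alpha\sqrt{2}))\cdot\int_{\mathbb{R}}\mathrm{sech}^2(s)\,ds = O(1)$. This is the main technical obstacle; everything else is a direct substitution into the formulas supplied by Theorem 3.1 and Corollary 3.2.
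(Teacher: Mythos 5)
Your proposal is correct and follows essentially the same route as the paper: evaluate the eigenvalues $\lambda_1=\theta/(1+\alpha)$, $\lambda_2=1$ at the density maximum and substitute into the skewness formula to obtain $Q_s\sim (1+\alpha)/(2\theta)=1/(2\sqrt{2}\,\theta\,\epsilon)$. You in fact supply two details the paper's one-line proof leaves implicit --- the explicit counting argument that an $O(1)$ fraction of mesh cells lands in the width-$\epsilon$ strips, and the verification that $\theta$ remains $O(1)$ as $\alpha\to\infty$ --- and your constant $1/(2\sqrt{2}\,\theta\,\epsilon)$ is the careful version of the paper's stated $1/(\sqrt{2}\,\theta\,\epsilon)$, the factor of $2$ being immaterial to the claimed inverse proportionality.
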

\begin{proof} 
Ignoring exponentially small terms, the eigenvalues of the Jacobian of the mesh mapping where the density function is at a maximum are $\lambda_1=\theta/(1+\alpha)$ and $\lambda_2=1$, hence the skewness $Q_s=1/2((1+\alpha)/\theta+\theta/(1+\alpha))$.
Since $\alpha>>1$ and $\theta$ is order 1
\[
Q_s\approx \frac{1}{\sqrt{2}\theta \epsilon}.
\]
\end{proof}
%\label{qnums}
%\end{equation}
%In contrast, by construction the M-alignment measure $Q_a = 1$ at all mesh points.
\subsubsection{Example 2: Two orthogonal shocks}
Consider orthogonal shocks of different widths and magnitudes with the associated scalar density
$
\rho(\mathbf{x})=\rho_1(x^{\prime})\rho_2(y^{\prime}).
$
Here $\rho_1(x')$, $\theta_1$, and $R_1(x')$ 
are the same as in Example 1, and 
\[\rho_2=1+10\sum_{m=-\infty}^{\infty}\mathrm{sech}^2(25(\sqrt{2}y^{\prime}-m)).
\]
A direct calculation gives $\theta_2=1.8 + {\cal O}(\exp(-25))$, and
\[
R_2(y^{\prime})=y^{\prime}+\frac{\sqrt{2}}{5}\sum_{m=-\infty}^{\infty}[\tanh(25(\sqrt{2}y^{\prime}-m))-\tanh(-25m)].
\]
The inverse of $R_2$ can be computed in the same manner as for $R_1$ in the previous case.
Using the same procedures as in Example 1, we have
\begin{eqnarray}
x&=&\frac{1}{\sqrt{2}}[R_1^{-1}(\theta_1(\xi +\eta)/\sqrt{2})-R_2^{-1}(\theta_2(-\xi +\eta)/\sqrt{2})],\nonumber\\
y&=&\frac{1}{\sqrt{2}}[R_1^{-1}(\theta_1(\xi +\eta)/\sqrt{2})+R_2^{-1}(\theta_2(-\xi +\eta)/\sqrt{2})].\nonumber
\end{eqnarray}
 A plot of the image of a uniform mesh under this map is shown in Fig. \ref{shocke3},
\begin{figure}[hhhhhhhhhhhhhhhhhhhhhhhhhhhhhhhhhhhhhhhhhhhhhh]
\includegraphics[height=5.5cm,width=6cm]{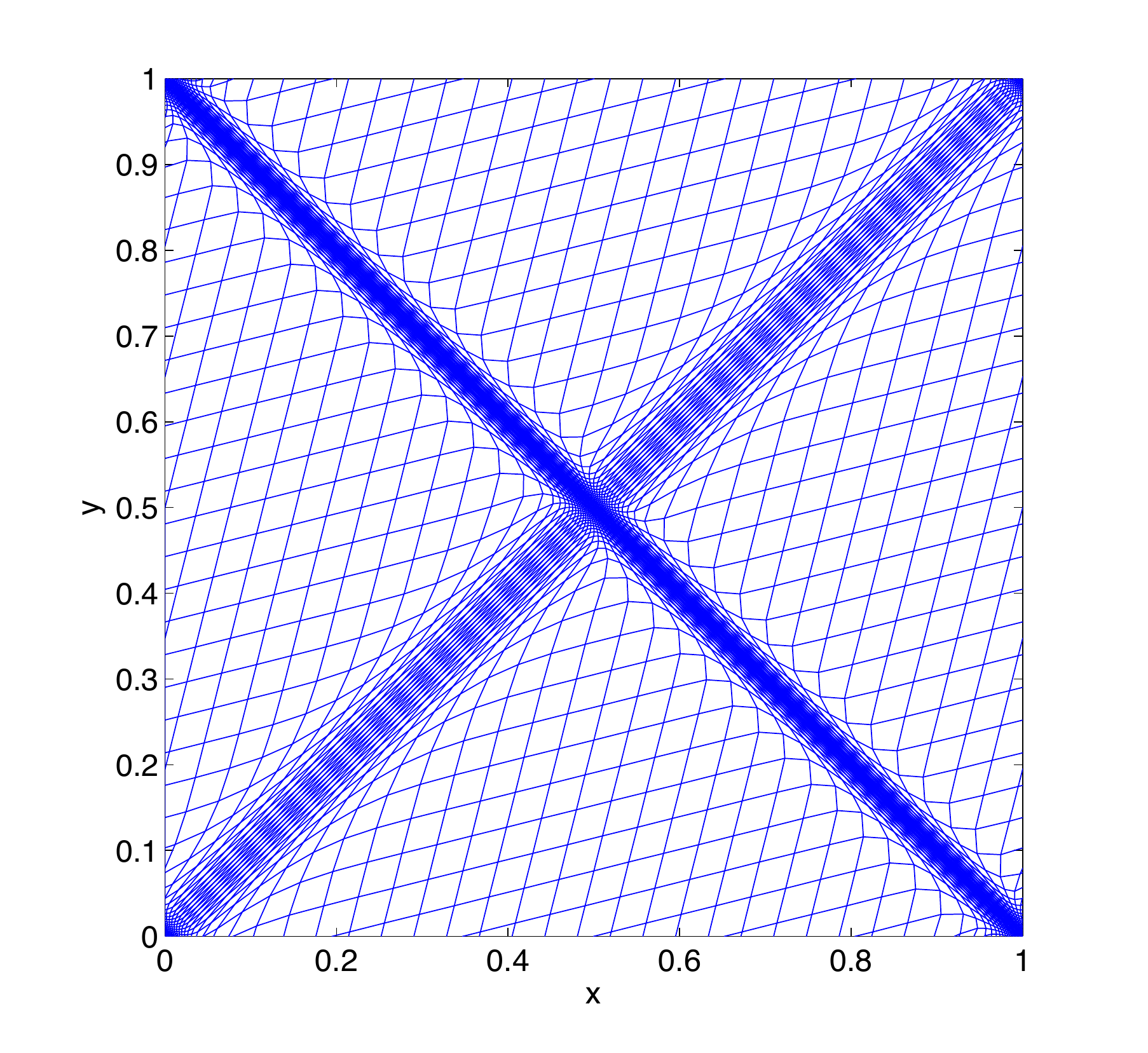}\includegraphics[height=5.5cm,width=7.2cm]{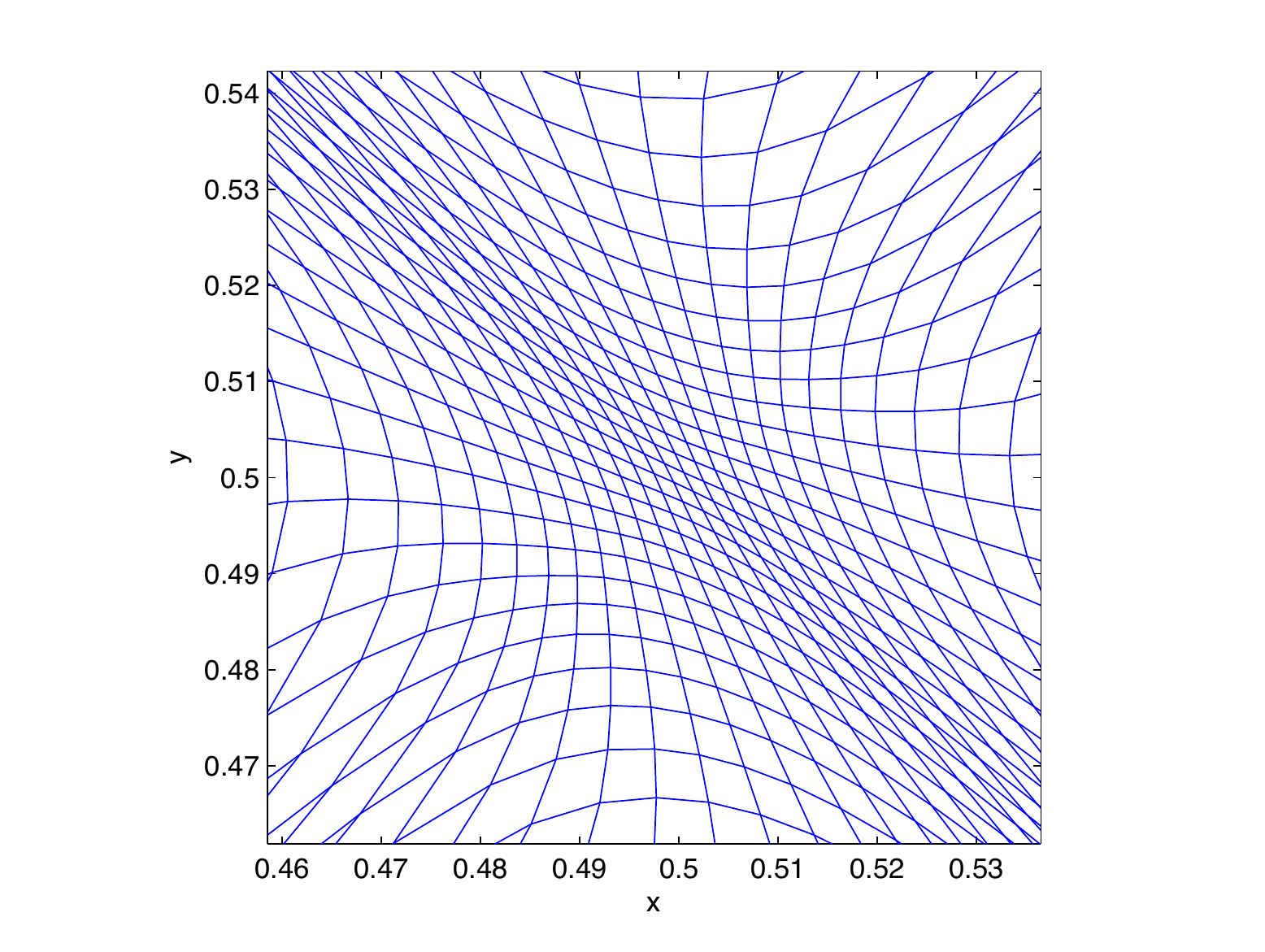}
\caption{(Left) A $(60\times60)$ mesh generated from the analytical solution of the Monge-Amp\`ere equation for the density function in Case 2. (Right) A zoom of the region along the shock where the density function is large.}
\label{shocke3}
\end{figure}
where we see the excellent alignment of the mesh to the two linear features. Note also the very smooth transition of the mesh
from one feature to the other. The eigenvalues  $\lambda_1, \lambda_2$ (up to exponentially small terms) are:
\begin{enumerate}
\item First linear feature alone: \quad $\lambda_1=3/51, \quad \lambda_2 = 1.8, $
\item Second linear feature alone: \quad $\lambda_1 = 3, \quad \lambda_2 = 1.8/11$
\item Intersection of the two linear features: $\lambda_1 = 3/51, \quad \lambda_2 = 1.8/11$
\item Outside the two linear features: $\lambda_1 = 3, \quad \lambda_2 = 1.8.$
\end{enumerate}
The respective values of the skewness measure $Q_s$ are 
$$1. \quad Q_s  = 15.31, \quad 2. \quad Q_s =  9.19, \quad 3. \quad Q_s = 1.57, \quad 4. \quad Q_s = 1.13.$$
We deduce that away from the linear features and also in the intersection of the two features the mesh in 
Example 2 is less skew than that of Example 1.
%%%%%%%%%%%%%%%%%%%%%%%%%%%%%%%%%%%%%%RADIALLY SYMMETRIC FEATURE%%%%%%%%%%%%%%%%%%%%%%%%%%%%%%%%%%%%%%%%%%%%%%%%%
%%%%%%%%%%%%%%%%%%%%%%%%%%%%SECTION 4 RADIALLY SYMMETRIC FEATURE%%%%%%%%%%%%%%%%%%%%%%%%%%%%%%%%%%%%%%%%%%%%%%%%%
%%%%%%%%%%%%%%%%%%%%%%%%%%%%%%%%%%%%%%RADIALLY SYMMETRIC FEATURE%%%%%%%%%%%%%%%%%%%%%%%%%%%%%%%%%%%%%%%%%%%%%%%%%
\section{Alignment to a radially symmetric feature}

\noindent In this section we look at radially symmetric features with small length scales. These tend to arise in applications
either in the form of singularities (such as in problems with blow-up \cite{Hou},\cite{BW:06}) or as thin rings, which arise directly as
in singular solutions to NLS \cite{Merle}, or approximately as in the curved fronts we study
in Section 5. We proceed as in the last section in that we study the alignment and
scaling properties of certain exact radially symmetric solutions of the Monge-Ampere equation. We
also study the global geometry and anisotropy of the resulting meshes, including the behaviour close to the domain boundaries.
Initially we look at analytic solutions in radially symmetric domains and then see how these solutions perturb
in domains without radial symmetry.

\subsection{Exact radially symmetric solutions of the Monge-Ampere equation} 
We begin by considering the form of the Monge Ampere equation (\ref{pmaeqa1}) and mesh mapping in the case of radially symmetric solutions in radially symmetric domains. We then consider the nature
of the meshes obtained when the density function approximates a Dirac measure.
%Let $\boldsymbol{\xi}=(\xi,\eta)\in \Omega_p$ and $\mathbf{x}=(x,y)\in\Omega_p$. In order to find a unique map from $\Omega_c$ to $\Omega_p$ we will consider the Monge-Ampere equation which is given by
%\begin{equation}
%\rho(x,y) H=\theta, \label{pmaeqa1}
%\end{equation}
%where $\rho(x,y)>0$, $\theta=\int_{\Omega_p}\rho(x,y)d\mathbf{x}/\int_{\Omega_c}d\boldsymbol{\xi}$, and $H$ is the determinant of the hessian matrix of the unique convex solution $Q(\xi,\eta)$ such that
%\[
%H=Q_{\xi\xi}Q_{\eta\eta}-Q_{\xi\eta}^2,
%\]
%The gradient of the solution
%\begin{equation}
%\nabla_{\xi}Q=\mathbf{x},\label{grad}
%\end{equation}
%$\nabla_{\xi}=(\partial/\partial \xi, \partial/\partial \eta)$, $\mathbf{x}=(x,y)$, is a unique map from $\Omega_c$ to $\Omega_p$. 
Therefore, we let $(x,y)=(R\cos(\Phi),R\sin(\Phi))$ and $(\xi,\eta)=(r\cos(\phi),r\sin(\phi))$, so that $R=\sqrt{x^2+y^2}$, and  $r=\sqrt{\xi^2+\eta^2}$, 
and assume that a circle of radius $r$ in $\Omega_c$ maps to a circle of radius $R$ in $\Omega_p$, under the map $R=R(r)$. 
Furthermore we assume that the boundary of a disc $\Omega_c$ maps to the boundary of a further disc $\Omega_p$, such that $r=R$ at the boundary.
For a density function that is locally radially symmetric about the origin
\[
\rho(\mathbf{x})=\rho(R),
\]
it follows, after some standard manipulations, that there is a radially symmetric solution $P(r)$ of the Monge-Ampere equation satisfying
\[
\Phi=\phi, \quad R = P_r \quad \mbox{and} \quad P_{\xi\xi}P_{\eta\eta}-P_{\xi\eta}^2=\frac{P_{r}P_{rr}}{r}=\frac{R}{r}\frac{dR}{dr}.
\]
The Monge-Ampere equation (\ref{pmaeqa1}) can be written as
\begin{equation}
\rho(R)\frac{R}{r}\frac{dR}{dr}=\theta, \label{radMA}
\end{equation}
where
\begin{equation}
\theta=\frac{\int_{\Omega_p}\rho(R) R \; dR \; d\Phi}{\int_{\Omega_c} r \;  dr \; d\phi.} \label{theta}
\end{equation}
We can now study the local structure of the map defined by this expression.

\vspace{0.2in}

\noindent \begin{lemma} 
(a) The eigenvectors of the Jacobian of the map are
\begin{equation}
\mathbf{e_1}=\frac{1}{r}\left[\begin{array}{c}{\xi}\\{\eta}\end{array}\right], \hspace{1cm}\mathbf{e_2}=\frac{1}{r}\left[\begin{array}{c}{-\eta}\\{\xi}\end{array}\right].\label{eigvecgen}
\end{equation}
The eigenvector $\mathbf{e_1}$ is in the direction of increasing $r$ and $\mathbf{e_2}$ orthogonal to this in the direction of increasing $\Phi$ ($\phi$).

(b) The corresponding eigenvalues are
\begin{equation}
\lambda_1=(r\psi)^{\prime}=\frac{dR}{dr},\hspace{.5cm}\mbox{and}\hspace{.5cm}\lambda_2=\psi=\frac{R}{r} = \theta/(\rho(R)\lambda_1). \label{eigvalgen}
\end{equation}

(c) The skewness measure (\ref{qske}) takes the form
\begin{equation}
Q_s=\frac{1}{2}\left(\frac{rR^{\prime}}{R}+\frac{R}{rR^{\prime}}\right).\label{2dqs}
\end{equation}

\end{lemma}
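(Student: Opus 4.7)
The plan is to exploit the radial ansatz to reduce everything to a scalar calculation, and then to recognise the Jacobian as a rank-one perturbation of a multiple of the identity, from which the eigensystem can be read off directly.

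First I would set $\psi(r) = P'(r)/r = R/r$. Since $P$ depends on $\boldsymbol{\xi}$ only through $r = |\boldsymbol{\xi}|$, the map from Theorem 2.2 becomes $\mathbf{x} = \nabla_\xi P = P'(r)\,\nabla_\xi r = \psi(r)\,\boldsymbol{\xi}$, which makes both $\Phi = \phi$ and $R = r\psi(r)$ manifest. Differentiating once more, a short calculation using $\partial_\xi r = \xi/r$ and $\partial_\eta r = \eta/r$ yields the clean structural identity
\begin{equation}
\mathbf{J} \;=\; \psi\, I \;+\; r\psi'\,\mathbf{e_1}\mathbf{e_1}^T,
\nonumber
\end{equation}
where $\mathbf{e_1} = \boldsymbol{\xi}/r$ is the outward unit radial vector. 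This decomposition is the heart of the argument.

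For part (a), since $\mathbf{e_1}\mathbf{e_1}^T$ is the orthogonal projection onto $\mathrm{span}(\mathbf{e_1})$, $\mathbf{J}$ fixes the splitting $\mathbb{R}^2 = \mathrm{span}(\mathbf{e_1}) \oplus \mathbf{e_1}^\perp$. Choosing the unit vector in $\mathbf{e_1}^\perp$ oriented in the direction of increasing $\phi$ gives $\mathbf{e_2} = (-\eta,\xi)^T/r$, consistent with the symmetry of $\mathbf{J}$ guaranteed by Theorem 2.2. For part (b), applying $\mathbf{J}$ to $\mathbf{e_1}$ and $\mathbf{e_2}$ using the identity above reads off $\lambda_1 = \psi + r\psi' = (r\psi)' = dR/dr$ and $\lambda_2 = \psi = R/r$. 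Substituting $\lambda_1\lambda_2 = (R/r)(dR/dr)$ into the radial Monge-Amp\`ere equation (\ref{radMA}) then gives $\lambda_2 = \theta/(\rho(R)\lambda_1)$.

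Part (c) is then an immediate substitution of $\lambda_1 = R'(r)$ and $\lambda_2 = R/r$ into the skewness formula (\ref{qske}), using that eigenvalues coincide with singular values for a symmetric Jacobian. The only genuine obstacle is the first step: isolating the rank-one structure of $\mathbf{J}$. Once that identity is in hand, each of (a), (b), (c) reduces to a one-line verification; conversely, without that structural observation one would be forced to diagonalise a generic $2 \times 2$ Hessian and hope the radial symmetry makes it simplify, which is a much messier route.
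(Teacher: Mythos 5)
Your proposal is correct and follows essentially the same route as the paper: both compute the Jacobian of the radial map $\mathbf{x}=\psi(r)\boldsymbol{\xi}$ and diagonalise it, the paper by writing out the entries $\psi+\xi^2\psi'/r$, etc., and factoring into a rotation--diagonal--rotation product, you by packaging the same matrix as the rank-one perturbation $\psi I + r\psi'\,\mathbf{e_1}\mathbf{e_1}^T$. Your formulation is a slightly cleaner way to read off the same eigenpairs, and the remaining steps (using (\ref{radMA}) for $\lambda_2=\theta/(\rho\lambda_1)$ and substituting into (\ref{qske})) match the paper exactly.
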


\vspace{0.1in}

\begin{proof} Letting $\psi:=R(r)/r$, it follows from straightforward manipulations, that the Jacobian matrix (expressed in $(\xi,\eta)$ coordinates) is
\begin{eqnarray}
\mathbf{J}&=&\left[\begin{array}{cc}\psi+\frac{\xi^2\psi^{\prime}}{r}& \frac{\xi\eta\psi^{\prime}}{r}\\ \frac{\xi\eta\psi^{\prime}}{r}& \psi+\frac{\eta^2\psi^{\prime}}{r}\end{array}\right],\nonumber\\
&=& \left[\begin{array}{cc}\frac{\xi}{r}& \frac{\eta}{r}\\ \frac{-\eta}{r}& \frac{\xi}{r}\end{array}\right] \left[\begin{array}{cc}(r\psi)^{\prime}& 0\\ 0& \psi\end{array}\right] \left[\begin{array}{cc}\frac{\xi}{r}& \frac{-\eta}{r}\\ \frac{\eta}{r}& \frac{\xi}{r}\end{array}\right], \nonumber
\end{eqnarray}
and so $J=\psi(r\psi)^{\prime}$, hence the result follows.  \end{proof}

\vspace{0.1in}

\noindent By  (\ref{cbjac2}) such a mesh will be aligned to the metric tensor
\begin{eqnarray}
\mathbf{M}&=& \left[\begin{array}{cc}\frac{\xi}{r}& \frac{\eta}{r}\\ \frac{-\eta}{r}& \frac{\xi}{r}\end{array}\right] \left[\begin{array}{cc}\frac{\theta}{(R^{\prime})^2}& 0\\ 0& \frac{\theta}{\psi^2}\end{array}\right] \left[\begin{array}{cc}\frac{\xi}{r}& \frac{-\eta}{r}\\ \frac{\eta}{r}& \frac{\xi}{r}\end{array}\right]. \label{MT41}
\end{eqnarray}

\vspace{0.1in}

\noindent Integrating (\ref{radMA}) we obtain
\begin{equation}
\int_0^R \rho(R'){R'} \; dR'=\theta\frac{r^2}{2}.\label{radMA2}
\end{equation}
For given $\rho(R) > 0$ this expression implicitly defines a unique monotone increasing function $R(r)$. Once this function is obtained we can explicitly write down the eigenvalues of the Jacobian matrix and thus quantify the skewness of a mesh element.

\vspace{0.1in} 

\noindent NOTE: These results can easily be extended to $n$-dimensional radially symmetric problems. In this case the generalisation of (\ref{radMA2}) is simply
\begin{equation}
{
\int_0^R \rho(R')(R')^{n-1} \; dR'=\theta\frac{r^n}{n}.
}
\label{radMA2nd}
\end{equation}

\noindent We now consider possible forms for the density function $\rho(R)$ which will concentrate the mesh close to certain features. Specifically, consider
\begin{equation}
\rho(R) = 1 + f(R)
\label{rhoform}
\end{equation}
where the function $f(R)$ is an approximation to a Dirac measure with mass
$$\frac{\gamma}{2} \equiv \int_0^{\infty} f(R) R \; dR,$$
which is large close to $R=a$ and small elsewhere. If $a = 0$ this density function will lead to a mesh concentrated at the origin, which will be appropriate for resolving the locally radially symmetric singular solutions encountered when studying blow-up type problems \cite{BW:09},\cite{Hou}. If $a > 0$ this will lead to a mesh concentrated in a thin ring of radius $a$. This will be appropriate for resolving
either a problem with a ring type singularity \cite{Merle} or (as we shall see when we study the Buckley-Leverett equation in Section 5) the resolution of a front in the solution of a PDE which has locally high curvature.

\vspace{0.1in}
\noindent If we substitute the expression (\ref{rhoform}) into (\ref{radMA2}) we can calculate the relation between $r$ and $R$ and hence determine the resulting mesh.
It is immediately evident that there are three separate regions (two if $a = 0$).

\vspace{0.1in}

\noindent 1. An {\em inner region} given by $R \ll a$ for which $\rho(R) \approx 1$ and hence 
\begin{equation}
R \approx \sqrt{\theta} \; r
\label{innerc}
\end{equation}
In this region the mesh is uniform and isotropic and has a scaling factor of $\sqrt{\theta}$. The value of $\theta$ depends upon the boundary conditions and we discuss it
presently.

\vspace{0.1in}

\noindent 2.  A  {\em singular region} in which $R \approx a$ where the mesh is concentrated close to the singular feature. The precise nature of this depends on the function $f(R)$.

\vspace{0.1in} 

\noindent 3. An {\em outer region} given  by  $R \gg a$, away from the singular feature and including the boundary. The form of the mesh in this outer region is given below

\vspace{0.2in}

\begin{theorem} Let $R \gg a$ and assume that $\rho(R)$ takes the form (\ref{rhoform}). Then

(a) The mesh is given by the expression 
\begin{equation}
R \approx \sqrt{\theta r^2 - \gamma}.
\label{bigmesh}
\end{equation}

(b) In this region the eigenvalues of the map are given by
\begin{equation}
{
\lambda_1 \approx \sqrt{\theta} /\sqrt{1 - \gamma/(\theta r^2)}, \quad \lambda_2 = \sqrt{\theta}\sqrt{1 - \gamma/(\theta r^2)}.
}
\label{lameqn}
\end{equation}

(c) The skewness measure is given by
\begin{equation}
{
Q_s \approx \frac{1}{2} \left( \frac{1}{1 - \gamma/(\theta r^2)} + 1 - \gamma/(\theta r^2) \right).
}
\label{radskew}
\end{equation}

\end{theorem}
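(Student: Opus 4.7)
The plan is to work directly from the integrated form (\ref{radMA2}) of the Monge-Amp\`ere equation and exploit the fact that $f(R)$ is concentrated near $R=a$: once $R\gg a$, the $f$-contribution to the integral has essentially saturated at its full mass $\gamma/2$. Parts (b) and (c) then follow mechanically by substituting the resulting closed form for $R(r)$ into Lemma 4.1 and into the skewness formula (\ref{2dqs}).

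For part (a), I would substitute $\rho(R')=1+f(R')$ into (\ref{radMA2}) to obtain
\begin{equation*}
\frac{R^2}{2} + \int_0^R f(R')\,R'\,dR' \;=\; \theta\,\frac{r^2}{2}.
\end{equation*}
Because $f$ is concentrated near $R=a$ and $R\gg a$, the interval $[0,R]$ already contains essentially all of the mass of $f$, so $\int_0^R f(R')R'\,dR' \approx \int_0^\infty f(R')R'\,dR' = \gamma/2$. Solving for $R$ yields (\ref{bigmesh}). For part (b), Lemma 4.1 gives $\lambda_1 = dR/dr$ and $\lambda_2 = R/r$; differentiating $R = \sqrt{\theta r^2-\gamma}$ produces $\lambda_1 = \theta r/\sqrt{\theta r^2 - \gamma}$, which I would factor as $\sqrt{\theta}/\sqrt{1-\gamma/(\theta r^2)}$, while dividing (\ref{bigmesh}) by $r$ gives $\lambda_2 = \sqrt{\theta}\sqrt{1-\gamma/(\theta r^2)}$. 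For (c), I would compute $rR'/R = r\lambda_1/R = \theta r^2/(\theta r^2-\gamma) = 1/(1-\gamma/(\theta r^2))$, whose reciprocal is $1-\gamma/(\theta r^2)$; plugging these into (\ref{2dqs}) immediately gives (\ref{radskew}).

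The one delicate point — and the only place where the argument is not a purely algebraic manipulation — is the approximation $\int_0^R f(R')R'\,dR' \approx \gamma/2$. What is actually being used is that the tail $\int_R^\infty f(R')R'\,dR'$ is negligible once $R$ is large compared with the width of the concentration region of $f$; this is precisely the meaning of the informal assumption $R\gg a$. Every subsequent $\approx$ in (\ref{bigmesh})--(\ref{radskew}) inherits this same approximation and nothing worse, so once this step is justified the remainder of the proof is exact. It is worth noting that the complementary regime $R\ll a$, which yields (\ref{innerc}), uses the analogous approximation $\int_0^R f(R')R'\,dR'\approx 0$ on the other side of the support of $f$, so the two limits are proved by essentially the same idea applied in opposite directions.
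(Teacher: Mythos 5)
Your proposal is correct and follows essentially the same route as the paper: you use the saturation of the integral $\int_0^R f(R')R'\,dR' \approx \gamma/2$ for $R \gg a$ in (\ref{radMA2}) to get (\ref{bigmesh}), and then apply Lemma 4.1 with $\rho \approx 1$ for parts (b) and (c). The only difference is that you spell out the tail estimate and the algebra that the paper leaves implicit.
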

\vspace{0.2in}

\begin{proof} As $f(R)$ is small if $R \gg a$, it follows from(\ref{radMA2}) that if $R \gg a$ then
$$R^2/2 + \gamma/2 \approx \theta r^2/2.$$
The result (\ref{bigmesh}) then follows. To obtain (\ref{lameqn}) and (\ref{radskew}) note that $\rho(R) \approx 1$ in this region  and apply Lemma 4.1. \end{proof}

\vspace{0.1in}

\noindent NOTE: We can generalise this result to $n$-dimensions in which case we have $R = \left( r^n \theta  - \gamma \right)^{1/n}$, 
so spherical shells are mapped to spherical shells, but cuboids are distorted.

\vspace{0.1in}

\noindent By applying Theorem 4.2 we can deduce the geometrical form of the mesh in this region. We note that whilst the relation (\ref{bigmesh})  
maps circles to circles, it does not map squares to squares. Indeed the image of a large
square centred on the origin will have a leaf-like shape with the sides of the square mapped closer to the origin than the corners. As $r$ and hence $R$ increases, $Q_s$ tends to one, and the mesh becomes asymptotically  isotropic with again a uniform scaling factor of $\sqrt{\theta}$. As $r$ decreases, the value of $Q_s$ in (\ref{radskew}) increases and the mesh becomes more anisotropic. To see this in more detail,
assume that the computational mesh $\tau_c$ is composed of uniform small squares of side $h$ aligned with the coordinate axes. 
A small square lying on a line through the origin parallel to the coordinate axes in the region $r > r_1$ ($R >a$)  will be mapped in turn to a small rectangle of sides $\lambda_1 h$ and $\lambda_2 h$.
In contrast, the squares on lines at an angle of $\pi/4$ or similar through the origin will be mapped into diamonds with interior diagonals of length $\sqrt{2} \lambda_1 h$
and $\sqrt{2} \lambda_2 h$. The smallest angle $\delta$ in such a diamond is given by $ \delta= 2 \arctan(\lambda_1/\lambda_2). $

\vspace{0.1in}

\noindent To examine the role played by the boundaries we consider a map from a circle in a computational domain of radius $r_*$ to one in a physical domain of radius $R_*$. This then determines the value of $\theta$
and in turn the level of ainisotropy at the boundary.

\vspace{0.2in}

\begin{lemma} (a) If the boundary of a disc  of radius $r_*$ is mapped to one of radius $R_*$ and $\alpha_2 \gg 1$ then

\begin{equation}
 \theta = (R_*^2 + \gamma)/r_*^2, \quad \lambda_2 = R_*/r_*, \quad \lambda_1=(R_*^2 + \gamma)/(R_* r_*)
\label{theteqn}
\end{equation}
(b) The anisotropy $Q_{s,*}$ of the mesh at the boundary is given by
\begin{equation}
Q_{s,*} = \frac{1}{2} \left( 1 + \gamma/R_*^2 + \frac{1}{1 + \gamma/R_*^2} \right).
\label{boundskew}
\end{equation}
(c) In the particular case of  $r_* = R_*$, $\lambda_1 = \theta$, $\lambda_2 = 1$ and
\begin{equation}
{
Q_{s,*} = \frac{1}{2} \left( \theta + \frac{1}{\theta} \right).
}
\label{boundthet}
\end{equation}
Hence the skewness of the mesh close to the boundary is small provided that $\theta$ is close to unity.
\end{lemma}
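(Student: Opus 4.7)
The plan is to treat Lemma 4.3 as a direct specialisation of Theorem 4.2 to the boundary $r = r_*$. The hypothesis $\alpha_2 \gg 1$ is interpreted as saying that the concentration length scale of $f$ is much smaller than $R_*$, so the boundary sits firmly in the outer region $R \gg a$ where the asymptotic formulas (\ref{bigmesh}) and (\ref{lameqn}) apply with negligible error. Once that is recognised, the entire argument reduces to evaluating these formulas at one point and re-expressing the result.

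For part (a), I would substitute $R = R_*$, $r = r_*$ into (\ref{bigmesh}) and square to obtain $R_*^2 = \theta r_*^2 - \gamma$, which rearranges to $\theta = (R_*^2 + \gamma)/r_*^2$. The key algebraic observation is that $1 - \gamma/(\theta r_*^2) = R_*^2/(\theta r_*^2)$, which collapses the square roots in (\ref{lameqn}) immediately: $\lambda_2 = \sqrt{\theta}\,R_*/\sqrt{\theta}\,r_* = R_*/r_*$, and $\lambda_1 = \theta r_*/R_* = (R_*^2+\gamma)/(R_* r_*)$, matching (\ref{theteqn}). For part (b), I would avoid substituting into (\ref{radskew}) directly and instead compute the ratio $\lambda_1/\lambda_2 = (R_*^2+\gamma)/R_*^2 = 1 + \gamma/R_*^2$, then plug into the equivalent form $Q_s = \tfrac12(\lambda_1/\lambda_2 + \lambda_2/\lambda_1)$ to recover (\ref{boundskew}) in one line. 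Part (c) is then just the specialisation $r_* = R_*$: the formula for $\theta$ gives $\theta = 1 + \gamma/R_*^2$, the eigenvalues become $\lambda_2 = 1$ and $\lambda_1 = \theta$, and (\ref{boundthet}) follows from the identity $1 + \gamma/R_*^2 = \theta$.

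There is no substantive obstacle; the lemma is essentially bookkeeping. The only genuine point of care is the justification that $\alpha_2 \gg 1$ is the correct quantitative form of $R_* \gg a$ needed for (\ref{bigmesh}) to hold at the boundary to leading order, since without it one would be forced to carry the full integral $\int_0^{R_*} \rho(R')R'\,dR'$ rather than its asymptote $R_*^2/2 + \gamma/2$. Once that approximation is acknowledged, all three conclusions follow by direct substitution, and the only interpretive remark worth highlighting is (c): at a radially symmetric boundary with $r_* = R_*$, the anisotropy at the boundary is driven entirely by the global constant $\theta = 1 + \gamma/R_*^2$, so a small total mass $\gamma$ of the concentrated feature (relative to $R_*^2$) automatically produces a near-isotropic mesh at the edge of the domain.
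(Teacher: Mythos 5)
Your proposal is correct and follows essentially the same route as the paper, which simply states that the results "follow immediately from (\ref{bigmesh}) and Lemma 4.1"; you have filled in exactly the substitutions ($R_*^2 = \theta r_*^2 - \gamma$, $\lambda_2 = R/r$, $\lambda_1 = \theta/(\rho\lambda_2)$ with $\rho \approx 1$) that the authors leave implicit. The only difference is expository: you make explicit the role of $\alpha_2 \gg 1$ in placing the boundary in the outer region, which the paper takes for granted.
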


\vspace{0.1in}

\noindent \begin{proof} These results follow immediately from (\ref{bigmesh}) and Lemma 4.1. \end{proof}

\subsection{Explicitly Calculated Meshes for Radially Symmetric Features}

\noindent Now consider a representative density function $\rho(R)$ having the properties of the function
in section 4.1 which is simple enough to allow explicit calculation of the mesh. In particular we take

\begin{equation}
\rho(R)=1 +f(R) \equiv 1+\alpha_1 \; \mathrm{sech}^2(\alpha_2(R^2-a^2)). \label{rho_sech}
\end{equation}
The parameter $\alpha_1 = \max(f(R))$ (assumed large) determines  the  density of the point concentration onto  the feature. A measure of the width of the feature is $1/\alpha_2a$ (assumed small) if $a > 0$ and  
and $1/\sqrt{\alpha_2}$ if $a = 0$.
It is immediate that 
\begin{equation}
\gamma = \alpha_r = \alpha_1/\alpha_2 \quad \mbox{if} \quad a =0, \quad \mbox{and} \quad \gamma = 2 \alpha_r \quad \mbox{if} \quad a > 0.
\label{gamar}
\end{equation}

\vspace{0.1in}

\noindent Using the expression (\ref{radMA2}) it follows that
\[
\int_{0}^{R} (1+\alpha_1 \; \mathrm{sech}^2(\alpha_2((R')^2-a^2)) \; R' \; dR'=\theta\frac{r^2}{2},
\]
and integrating and rearranging both sides we obtain
\begin{equation}
{R^2}+\alpha_r \; \mathrm{tanh}(\alpha_2(R^2-a^2))+ \alpha_r \; \mathrm{tanh}(\alpha_2 a^2)=\theta r^2 =:F(R).\label{FR}
\end{equation}

\vspace{0.1in}

\noindent We will now analyse the solution for the cases of (i) singular (blow-up) solution corresponding to $a = 0$ and (ii) ring solutions
corresponding to $a >  0.$

\subsubsection{Meshes for Singular Solutions}

\noindent When computing solutions with radially symmetric singularities arising over small length scales, 
such as those observed in the calculation of blow-up solutions \cite{BW:06}, \cite{Hou}, we seek meshes which are uniform and isotropic both inside and away from the
singular region, and which have a smooth transition between these regions. Such meshes are obtained by this method. To see this, note from  (\ref{FR})
that for $a = 0$
\begin{equation}
{
R^2 + \alpha_r \; \mathrm{\tanh}(\alpha_2 R^2)  = \theta r^2.
}
\label{firsteqn}
\end{equation}
The singular region, in which the mesh is concentrated, has radius of the order of $R_1 = 1/\sqrt{\alpha_2}$ . For $R \ll  R_1$ it follows from (\ref{firsteqn}) that
\begin{equation}
R \approx  r \sqrt{\theta/(1+\alpha_1)}.
\label{Rbu}
\end{equation}
We observe that $r$ and $R$ are linearly related and hence, as required,  the mesh is uniform and isotropic in this region. The corresponding region in the computational domain 
is then given by $r < r_1$ where
\begin{equation}
\sqrt{\theta} r_1 \approx \sqrt{(1 + \alpha_1)/\alpha_2}.
\label{r1eqn}
\end{equation} 
Note also that going from the computational to the physical domain we see a mesh compression factor of $\sqrt{\theta/(1 + \alpha_1)}$. 

\vspace{0.1in}

\noindent As $R$ increases beyond $1/\sqrt{\alpha_2}$ then $\tanh(\alpha_2 R^2)$ rapidly tends towards unity, and the mesh evolves into the outer region form given in Theorem 4.2. Using Lemma 4.1 we can explicitly calculate the eigenvalues of the transformation, and therefore quantify the level of skewness using the measure $Q_s$, in the regions
close to the singularity and in the far field.  
%Given a smooth monitor function
%we would expect the transition between these two regions to be smooth and gradual. A smoother approximation to $R$ can be obtained numerically by fitting a spline to a discrete 
%set of points $(r_i, R_i)$, where ${R}_i={R}(r_i)$, $r_i=\sqrt{\xi_i^2+\eta_i^2}$, and $\xi_i=i/N-1/2$,  $\eta_i=i/N-1/2$ for $i=0,1,...,N$. This is the approach we take to plot the meshes in Fig. \ref{meshapprox_0} and Fig. %\ref{meshapprox_025} .
Specifically,
\[
{\lambda}_2= R/r \approx \left\{\begin{array}{ll}(\theta/(1+\alpha_1))^{1/2}, &\hspace{.2cm}\mbox{for}\hspace{.2cm} R \ll R_1,\\
r^{-1}({\theta r^2-\alpha_r})^{1/2}, &\hspace{.2cm}\mbox{for}\hspace{.2cm} R \gg R_1,\end{array}\right .
\]
%An approximation of $\lambda_1$ using (\ref{eigvalgen}) is not very accurate in the transition region so instead we use
and 
$\lambda_1=\theta/(\rho(R) \lambda_2).$
The skewness measure $Q_s$ is then
\begin{equation}
{Q}_s :\approx \left\{\begin{array}{ll}\frac{1}{2}\left(\frac{(1+\alpha_1)}{\rho}+\frac{\rho}{(1+\alpha_1)}\right), &\mbox{for}\hspace{.1cm} R \ll R_1,\\
\frac{1}{2}\left(\frac{r^2\theta}{(r^2\theta-\alpha_r)}+\frac{(r^2\theta-\alpha_r)}{r^2\theta}\right), &\mbox{for}\hspace{.1cm} R \gg R_1.\end{array}\right .\label{QSbu}
\end{equation}
In the singular region $\rho(R) \approx 1+\alpha_1$, so $\lambda_1\approx \lambda_2\approx \sqrt{\theta/(1 + \alpha_1)}$, ${Q}_s\approx 1$, and the mesh is isotropic. If $R \gg R_1$ then $Q_s$ approaches one as $R \to \infty$. Note the value of $Q_s$ here is that given by (\ref{radskew}) with $\gamma=\alpha_r$ and $a=R_1$. As $R$ decreases towards $R_1$ the mesh becomes more anisotropic and $Q_s$, as determined implicitly from (\ref{firsteqn}) takes a maximum value $Q_{s,max}$. 
%This can be estimated by substituting $r_1$ from (\ref{r1eqn}) into the outer estimate for $Q_s$ to give an upper estimate of
%$$Q_{s,max} \approx \frac{1}{2} \left( (1 + \alpha_1 + \frac{1}{(1 + \alpha_1)} \right).$$
%Note how this depends mainly on $\alpha_1$.
This maximum value occurs near $R\approx 2/\sqrt{\alpha_2}$ for which
\begin{equation}
Q_{s,max} \approx \frac{1}{2} \left( \frac{4+\alpha_1 \mathrm{tanh}(4)}{4-\alpha_1 (1-\mathrm{tanh}(4))}+\frac{4-\alpha_1 (1-\mathrm{tanh}(4))}{4+\alpha_1 \mathrm{tanh}(4)}\right).\label{qsmax}
\end{equation}
%Note how this depends mainly on $\alpha_1$.
%A plot of $R$ as a function of $r$ and also of $Q_s$ as a function of $r$ when  $\alpha_1=10, \alpha_2 = 200$ and $\theta = 1.2$ is given in Figure \ref{qsfig}
%\begin{center}
%\begin{figure}[hhhhhhhh!!!!!]
% \includegraphics[height=8cm,width=8cm]{rplotc.pdf},  \includegraphics[height=8cm,width=8cm]{}
%\caption{ \small The value of $R$ as a function of $r$, (b)The value of $Q_s(r)$ when $\alpha_1=10$ and $\alpha_2 = 200$.}
%\label{qsfig}
%\end{figure}
%\end{center}

\noindent We now consider two examples of meshes for $r_* = R_* = 1/2$.

\vspace{0.1in}

\noindent If  $\alpha_1=10$, and $\alpha_2=200$ then from (\ref{theteqn}) we have $\theta = 1.2$ and from (\ref{boundthet})  $Q_{s,*} = 61/60$ at the boundary. Hence the mesh has skew elements at the boundary.
The elements of maximum skewness are located just outside the blow-up region and from (\ref{qsmax}) $Q_{s,max}=1.9$. 
The resulting mesh as an image of a $60\times60$ uniform mesh in the computational domain is plotted on the left in Figure \ref{meshapprox_0}, and the structure of the intermediate and outer regions is apparent.
If $\alpha_1=50$, and $\alpha_2=100$, then $\theta = 3$ and $Q_{s,*} = 5/3$. At the boundary
$\lambda_1/\lambda_2 = \theta = 3$, hence the mesh elements will be stretched in the radial direction by a factor of $3$. In the singular region the 
elements are isotropic and the elements in the physical domain will be approximately $\sqrt{3/51} \approx 1/4$ the size of those in the computational domain. The maximum skewness $Q_{s,max}=6.8$ and so the mesh elements will be stretched in the radial direction by a factor of $13$.  The mesh is shown on the right in Figure \ref{meshapprox_0} and shows a much greater degree of skewness. Again we note that much greater skewness would arise  than the example presented here for a larger value of $\theta$.
\begin{figure}[hhhhhhhh!!!!!]
\includegraphics[height=6cm,width=7cm]{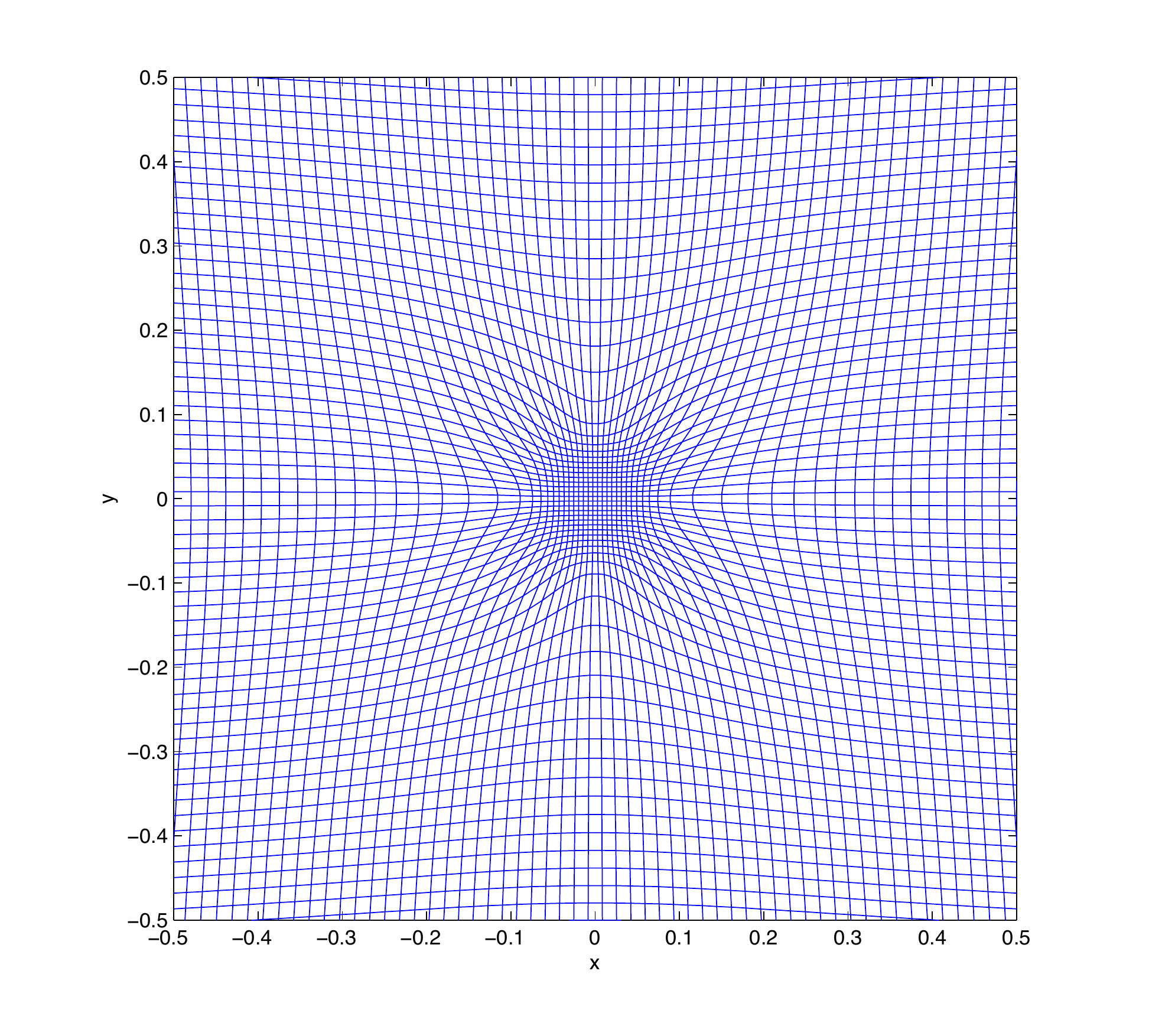} \includegraphics[height=6cm,width=7cm]{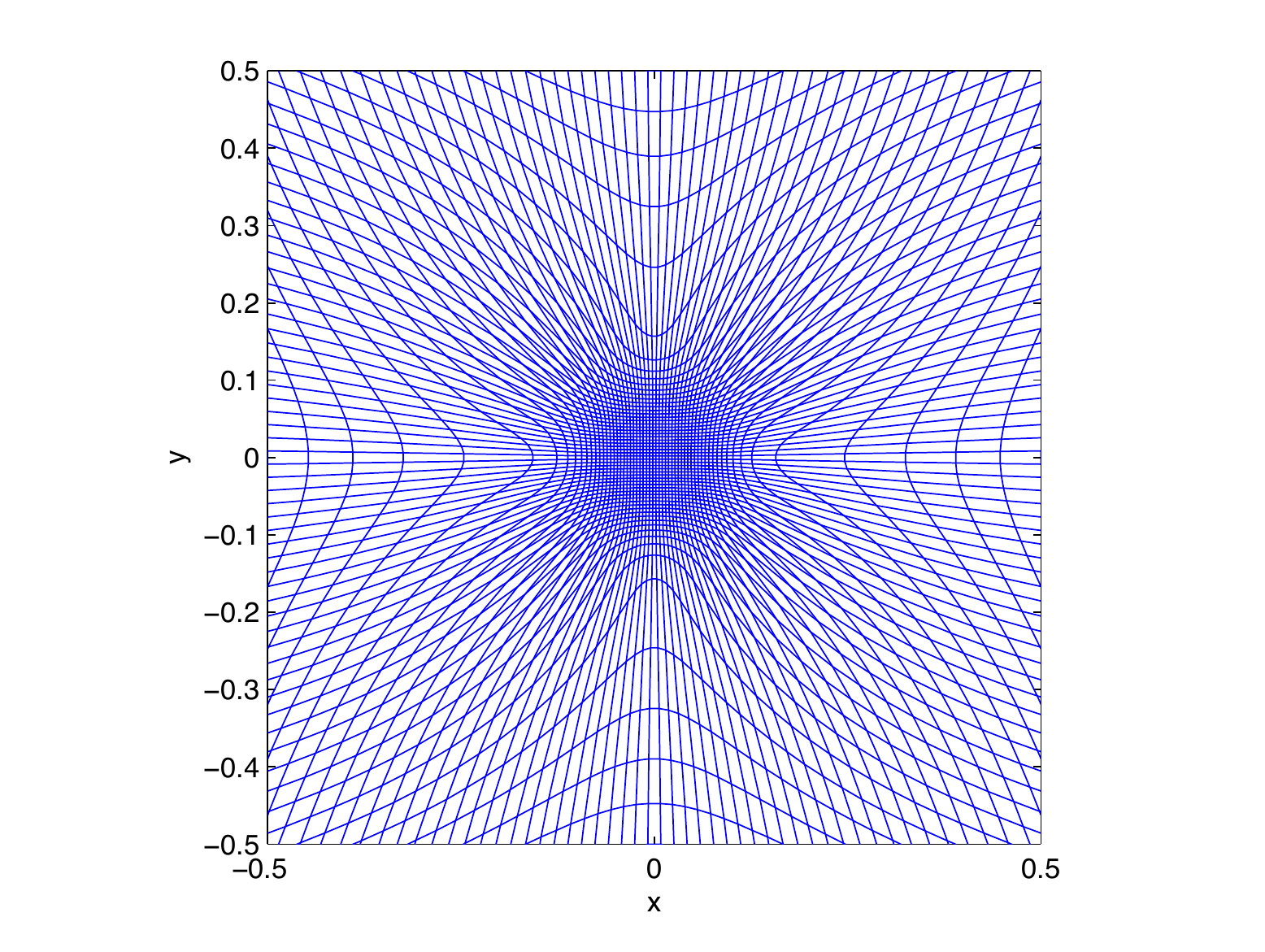}
\caption{ \small The mesh  generated from the image of a regular square mesh ($60\times60$) under the action of a radially symmetric solution of the Monge-Ampere equation for $ a=0$ when $\alpha_1=10$, $\alpha_2=200$, $\theta=1.2$ (left)
and when $\alpha_1=50$, $\alpha_2=100$, $\theta=3$ showing greater skewness (right). The leaf like structure of the mesh in the outer region is apparent in both examples. }
\label{meshapprox_0}
\end{figure}
\vspace{0.1in}
\noindent It is interesting to note these meshes
have the same structure as those generated by the Monge-Ampere method to solve PDE's with blow-up solutions \cite{BW:06}.

\subsubsection{Ring solution}

\noindent We  now consider the case of $ a > 0,\alpha_1 \gg 1, \alpha_2 \gg 1$ so that $\rho \approx 1$ if $|R^2-a^2| > {\cal O}(1/a_2^2)$ and $\rho \approx 1 + \alpha_1$ otherwise, which leads to mesh concentration along a ring. 
For $R \gg a$ the mesh is described by the outer solution considered earlier, with anisotropy at the boundary given by Lemma 4.3.  Similarly, if $R \ll a$ then the mesh is described by the inner region and isotropy with
a scale factor of $\sqrt{\theta}.$
When $a > 0$ the function (\ref{FR}) can be approximated by 
\begin{eqnarray}
{R}\approx\left\{\begin{array}{ll}\sqrt{r^2\theta}, &\hspace{.2cm}\mbox{for}\hspace{.2cm}r \ll  r_1,\\
\sqrt{\frac{r^2\theta-\alpha_r+\alpha_1a^2}{1+\alpha_1}}, &\hspace{.2cm}\mbox{for}\hspace{.2cm} r_1 \ll r \ll r_2,\\
\sqrt{r^2\theta-2\alpha_r}, &\hspace{.2cm}\mbox{for}\hspace{.2cm}r\geq r_2,\end{array}\right .\label{Rrs}
\end{eqnarray}
 where the radii $r_1=\sqrt{\theta^{-1}(a^2-1/\alpha_2)}$, and $r_2=\sqrt{\theta^{-1}(a^2+1/\alpha_2+2\alpha_r)}$ are mapped to $R_1=\sqrt{a^2-1/\alpha_2}$ and $R_2=\sqrt{a^2 + 1/\alpha_2}$, respectively.
Using (\ref{eigvalgen}) and (\ref{Rrs})
\begin{eqnarray}
\lambda_2&\approx&\left\{\begin{array}{ll}{\theta}^{1/2}, &\hspace{.2cm}\mbox{for}\hspace{.2cm}r \ll r_1,\\
((r^2\theta-\alpha_r+\alpha_1a^2)/(r^2(1+\alpha_1)))^{1/2}, &\hspace{.2cm}\mbox{for}\hspace{.2cm}r_1  \ll r \ll r_2,\\
((r^2\theta-2\alpha_r)/r^2)^{1/2}, &\hspace{.2cm}\mbox{for}\hspace{.2cm}r\geq r_2.\end{array}\right .\label{EIG_RS}\\
\lambda_1&\approx&  \theta/(\rho \lambda_2).\nonumber
\end{eqnarray}
Similarly, the level of anisotropy  $Q_s$ can be approximated by
\begin{eqnarray}
{Q}_s:=\left\{\begin{array}{ll}\frac{1}{2}\left(\frac{1}{\rho}+{\rho}\right), &\hspace{.2cm}\mbox{for}\hspace{.2cm}r< r_1,\\
\frac{1}{2}\left(\frac{\theta r^2(1+\alpha_1)}{\rho(r^2\theta-\alpha_r+\alpha_1a^2)}+\frac{\rho(r^2\theta-\alpha_r+\alpha_1a^2)}{\theta r^2(1+\alpha_1)}\right), &\hspace{.2cm}\mbox{for}\hspace{.2cm} r_1 <r< r_2,\\
\frac{1}{2}\left(\frac{\theta r^2}{\rho(r^2\theta-2\alpha_r)}+\frac{\rho(r^2\theta-2\alpha_r)}{\theta r^2}\right), &\hspace{.2cm}\mbox{for}\hspace{.2cm}r> r_2.\end{array}\right .\label{QSring}
\end{eqnarray}
Inside the ring with $R \ll a$, since $\rho\approx 1$, $\lambda_1\approx \lambda_2\approx \sqrt{\theta}$, so ${Q}_s\approx 1$ and the mesh is isotropic. On the ring near $R \approx a$, $\rho\approx 1+\alpha_1$, and the degree of anisotropy depends on the value $\alpha_1a^2-\alpha_r$. The larger this value the more anisotropic the mesh. As $a\rightarrow \sqrt{\alpha_r/(\theta-1)}$ then $\lambda_1\rightarrow \theta/\rho$, and $\lambda_2\rightarrow 1$, hence
$\lambda_1/\lambda_2  \rightarrow\theta/\rho.$
Therefore for a large enough radius of curvature $a$ the anisotropy approaches that of a linear feature, as expected. As the radius of curvature becomes smaller and $a\rightarrow\sqrt{1/\alpha_2}$, $\rho\rightarrow1+\alpha_1$, so 
$\lambda_1/\lambda_2 \rightarrow  r^2\theta/(r^2\theta +\alpha_1a^2-\alpha_r) \rightarrow1,$ and the mesh becomes isotropic. 
\vspace{0.1in}

\noindent For example, if $a=0.25$ and $r_* = R_* = 1/2$, then choosing $\alpha_1=10$, and $\alpha_2=200$, gives $\theta \approx 1.4$ and $Q{s,*} = 1.05$. This results in fairly isotropic elements at the boundary.
Inside the ring the mesh elements are isotropic and ${Q}_s=1$ near the centre of the ring. Along the ring the elements are anisotropic, and $Q_s=3.1$ at $R=0.25$ which is the maximum value. The mesh is shown in Fig. \ref{meshapprox_025}.
\begin{figure}[hhhhhhhh!!!!!]
\includegraphics[height=6cm,width=8cm]{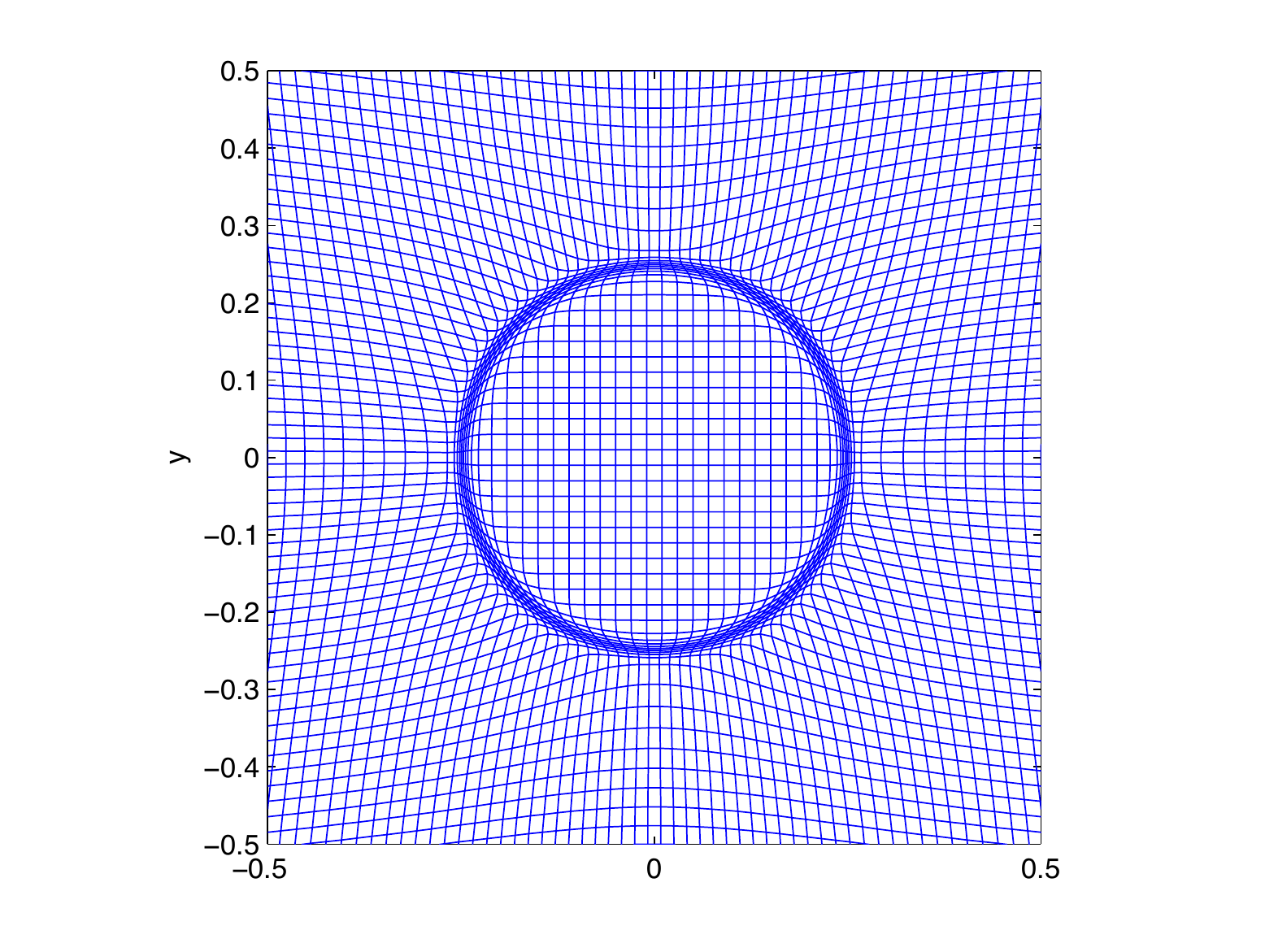} \includegraphics[height=6cm,width=8cm]{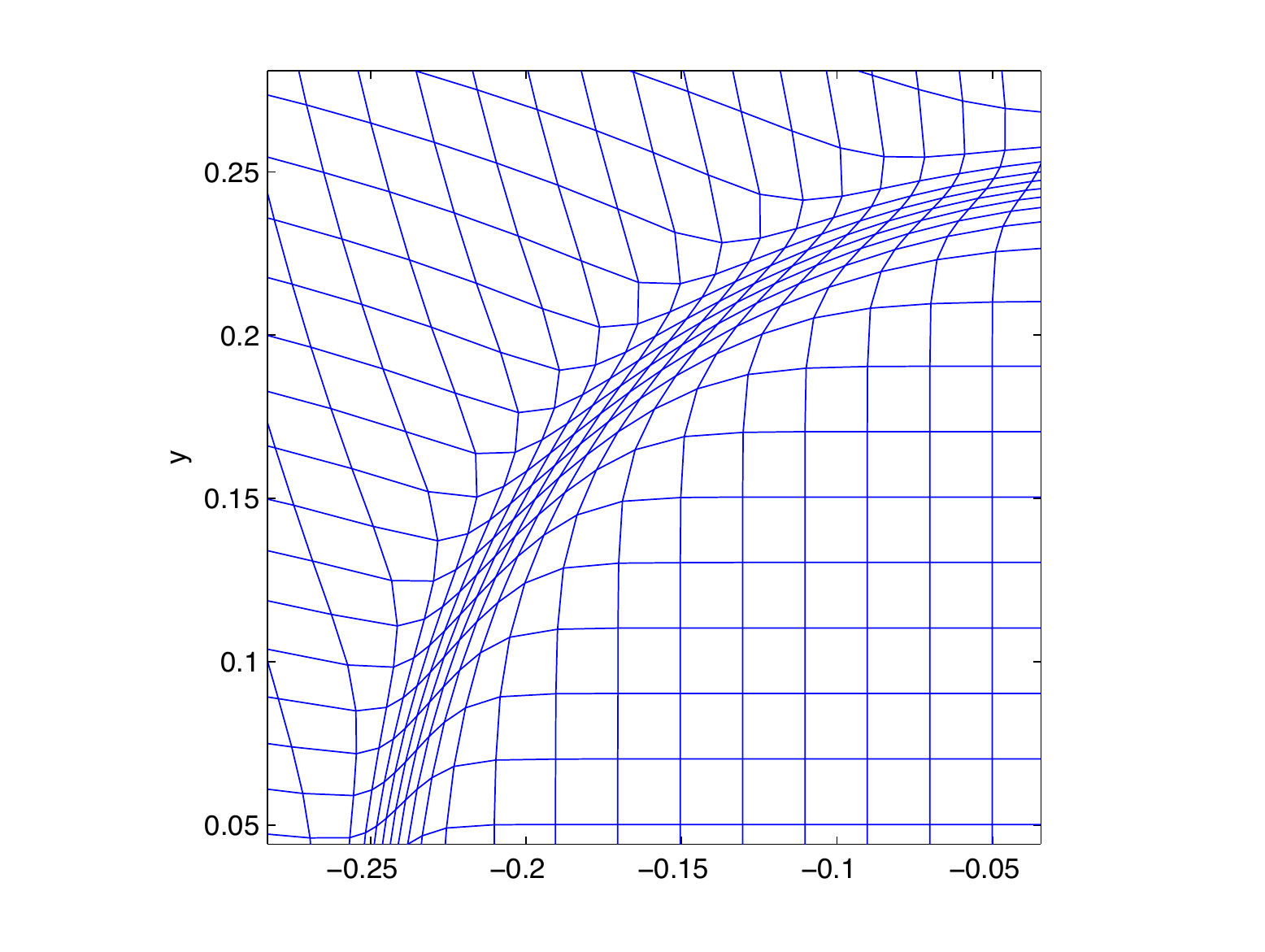}
\caption{ \small A mesh generated from a radially symmetric solution of the Monge-Ampere equation when $\alpha_1=10$, $\alpha_2=200$, $\theta=1.4$ and  $a=0.25$ (left). An enlargement the ring feature (right).}
\label{meshapprox_025}
\end{figure}

\vspace{0.1in}

\noindent If instead $\alpha_1=50$, and $\alpha_2=100$, then $\theta\approx5$ and $Q_{s,*} = 2.6$ so that at the boundary the mesh elements are skew. 
Inside the ring the mesh elements are isotropic with a scale factor of $\sqrt{5}$. However, the maximum value of ${Q}_s \approx 5.1$ does not occur along the ring, as in the previous example, but just outside the ring where elements are stretched in the radial direction. This can be seen in the mesh plot in Fig. \ref{meshapprox3_025}.

%\begin{center}
%\begin{figure}[hhhhhhhh!!!!!]
% \includegraphics[height=8cm,width=8cm]{rplotr.pdf},  \includegraphics[height=8cm,width=8cm]{}
%\caption{ \small The value of $R$ as a function of $r$, (b)The value of $Q_s(r)$ when $\alpha_1=50$,$\alpha_2 = 100$ and $a=0.25$.}
%\label{newfigc}
%\end{figure}
%\end{center}

\begin{figure}[hhhhhhhh!!!!!]
\includegraphics[height=6cm,width=7cm]{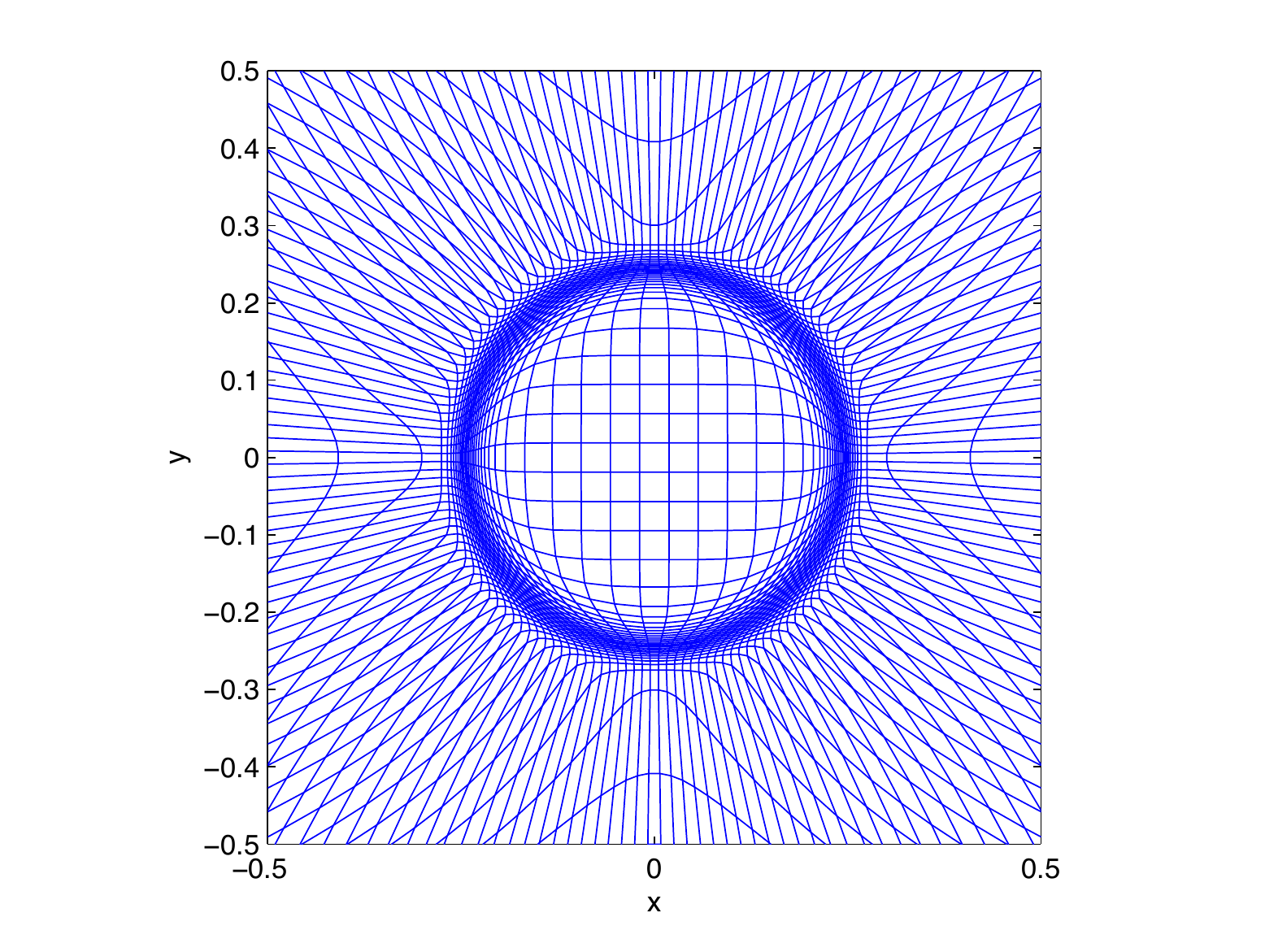} \includegraphics[height=6cm,width=7cm]{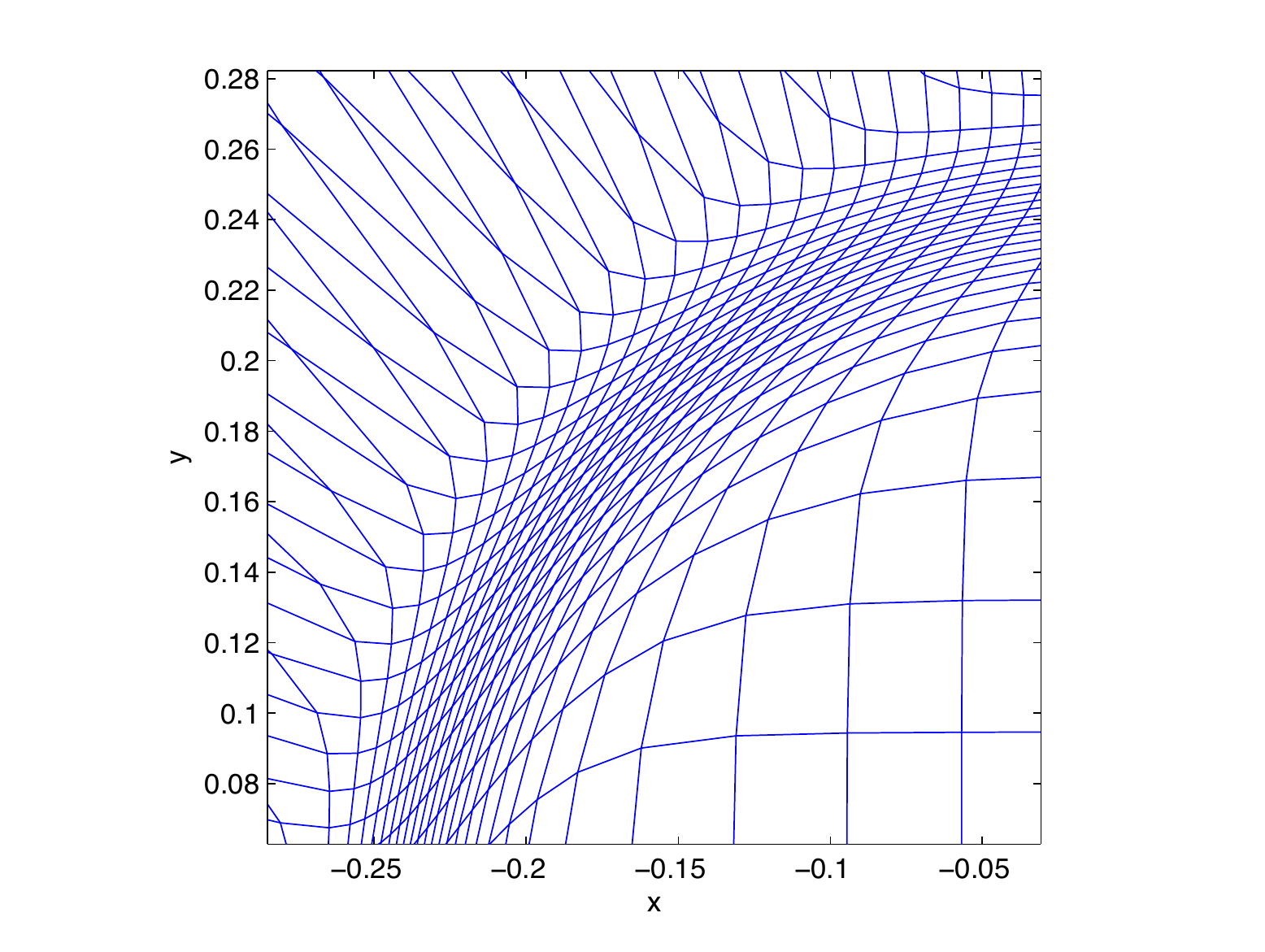}
\caption{ \small A mesh generated from radially symmetric solution of the Monge Ampere equation when $\alpha_1=50$, $\alpha_2=100$, $\theta=5$ and  $a=0.25$ (left). An enlargement of the ring feature (right).}
\label{meshapprox3_025}
\end{figure}

\subsection{Solutions in domains without radial symmetry}
The examples described in the previous section relate to problems in which we can exactly solve the Monge-Amp\`ere equation in a disc, mapping the boundary of a disc to that of another disc.
We now consider problems in more general domains. We note that in the outer region $R \to \sqrt{\theta} \; r$ as $r \to \infty$ so that in the limit square domains
are mapped to square domains. For most such problems the exact solution of the Monge-Ampere equation
is intractable and we must find the solution of  this nonlinear elliptic PDE, together with its associated boundary conditions, numerically. This can either be done directly  \cite{Finn}, \cite{froese}, or by using
a relaxation method  \cite{BW:09}, \cite{walsh}.  In this section we will consider, as before,  the mesh determined for a radially symmetric feature using the density function (\ref{rho_sech}), 
but now for unit square computational and physical domains centred at the origin. It is shown in \cite{BW:06} that the boundary mapping condition is equivalent to imposing Neumann boundary conditions on the solution to the Monge-Ampere equation. This calculation will allow us to assess the impact of boundary conditions on the alignment of the mesh.
In Figure \ref{ringmesh3PMA} (on the left) we see the mesh generated using a numerical solution of the Monge-Ampere equation with Neumann boundary conditions when $\Omega_C=\Omega_P= S \equiv [-0.5, 0.5]^2$, $a=0.25$, $\alpha_1=10$, and $\alpha_2=200$. The skewness measure $\hat{Q}_s$ for this mesh, which is computed numerically using (\ref{qske}), is shown on the right of Fig. \ref{ringmesh3PMA}.
\begin{figure}[hhhhhhhh!!!!!]
 \includegraphics[height=6cm,width=7.5cm]{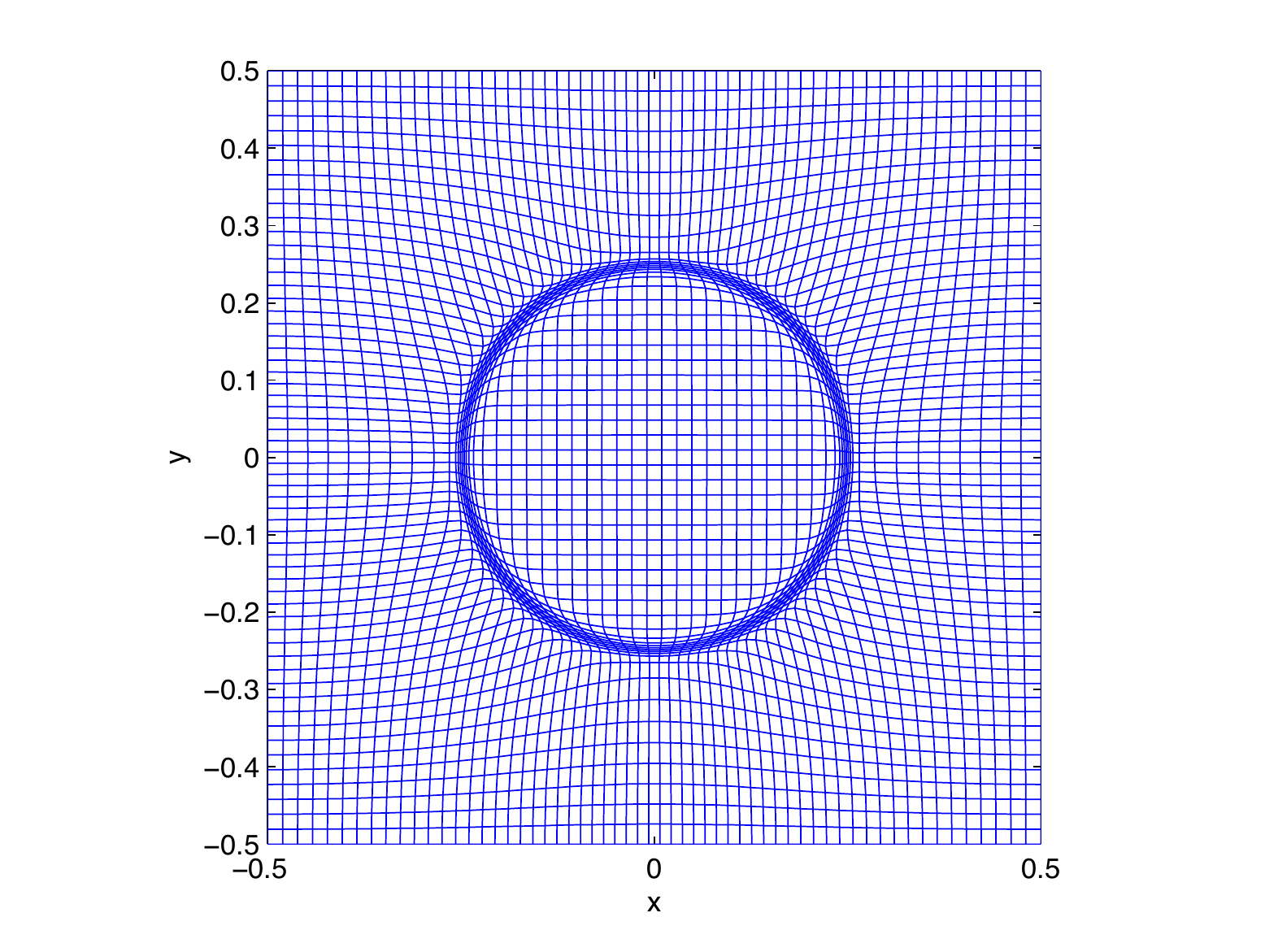} \includegraphics[height=6cm,width=7.5cm]{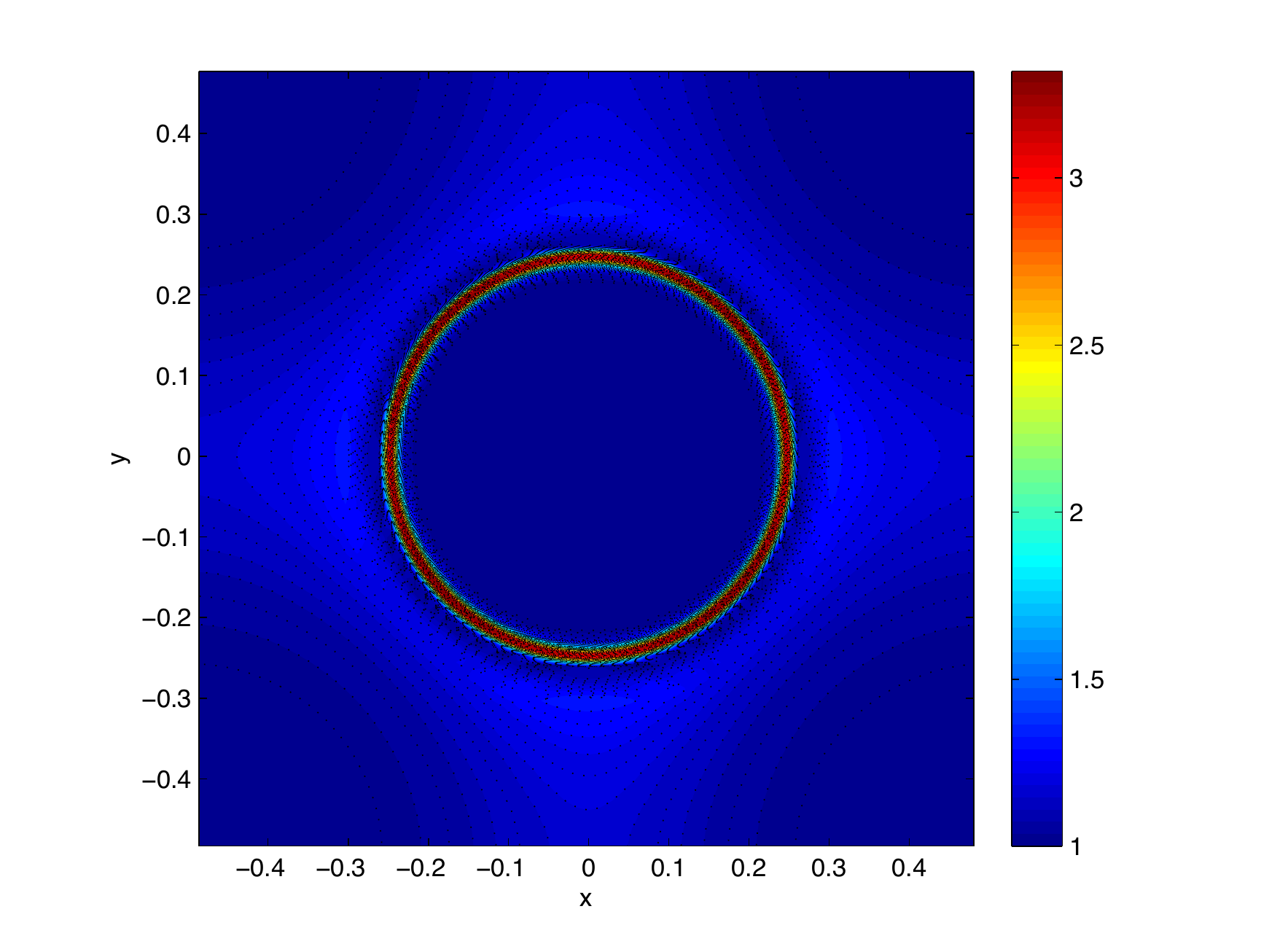}
\caption{ \small The  $(60\times60)$ mesh computed numerically for the density function (\ref{rho_sech}) with  $\alpha_1=10$, $\alpha_2=200$, and $a=0.25$, with boundary $\Omega_C=\Omega_P =[-0.5, 0.5]^2$, (left). The numerically computed skewness measure $\hat{Q}_s$ (right).}
\label{ringmesh3PMA}
\end{figure}
A comparison of $\hat{Q}_s$ with the skewness measure ${Q}_s$ for the radially symmetric solution in (\ref{QSring}), reveals that the effects of the square geometry on the skewness of the mesh are negligible. 
The skewness is almost radially symmetric for the mesh generated in the unit square, although the skewness of elements that lie along the axis $y=0$ and those that lie along $y=x$ differ slightly. The values of ${Q}_s$ at $R=0$, $R=a$, and $R=1/2$, are $1$, $3.1$, and $1.05$ respectively. The values of $\hat{Q}_s$ at $(0,0)$, $(a,0)$, $(1/2,0)$, are 1, 3.1, 1.2, where as at  $(0,0)$, $(a/\sqrt{2},a/\sqrt{2})$, $(1/2,1/2)$ they are 1, 3.3, and 1. 
%\begin{figure}[hhhhhhhh!!!!!]
% \includegraphics[height=6cm,width=7.5cm]{} \includegraphics[height=6cm,width=7.5cm]{qsxyaxis_025.pdf}
%\caption{ \small A comparison of  $Q_s$ and $\hat{Q}_s$ for the numerically computed mesh in Fig. \ref{ringmesh3PMA} along the line $y=0$  (left), and along the line $y=x$ (right)}
%\label{qsxaxis_025}
%\end{figure}
In Figure \ref{blowupPMA} (on the left) we see the mesh generated numerically when $\Omega_C=\Omega_P=S$, $a=0$, $\alpha_1=50$,  $\alpha_2=100$, and (on the right) the numerically computed skewness measure $\hat{Q}_s$. As in the previous section, this mesh is much more skew outside the blow-up region.
\begin{figure}[hhhhhhhh!!!!!]
 \includegraphics[height=6cm,width=7.5cm]{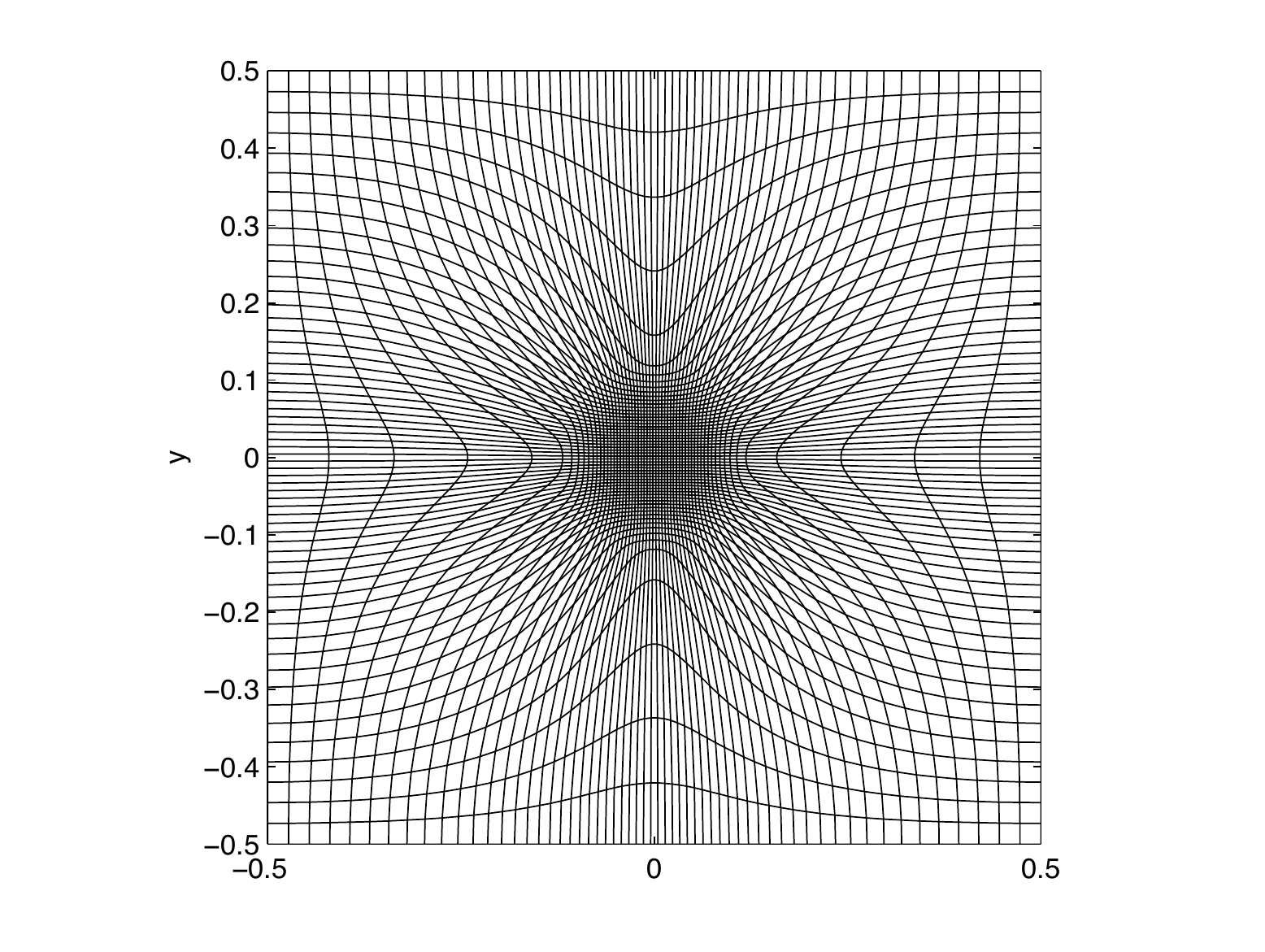} \includegraphics[height=6cm,width=7.5cm]{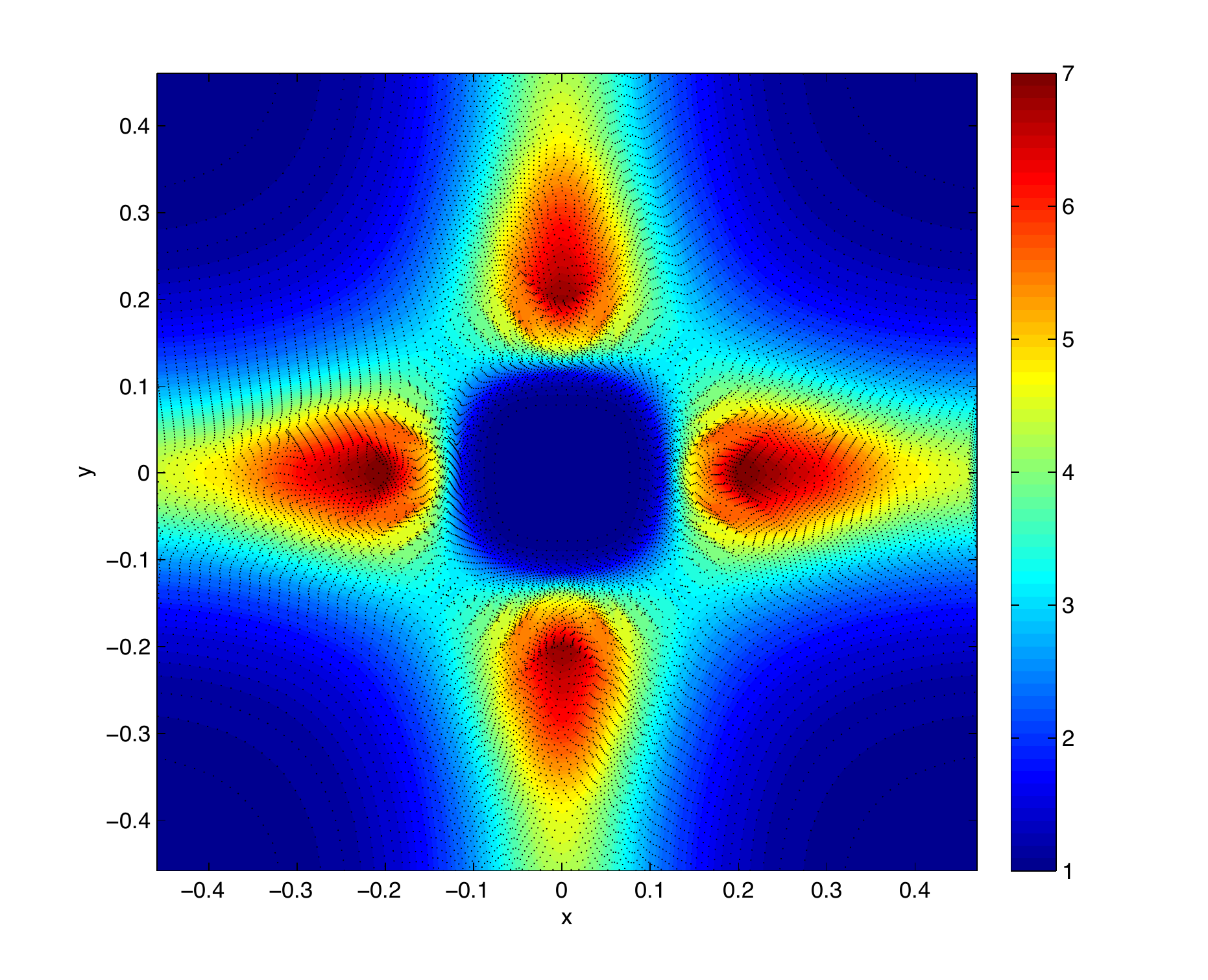}
\caption{ \small  Numerically computed  $(60\times60)$ mesh in $S$ for the density function (\ref{rho_sech}), with $\alpha_1=50$, $\alpha_2=100$, and $a=0$ (left). The numerically computed skewness measure $\hat{Q}_s$ (right).}
\label{blowupPMA}
\end{figure}
However, in this case the skewness is clearly not radially symmetric, and we see significant effects of the square geometry as we approach the boundary.
Elements that lie along the axis $y=0$ and those that lie along $y=x$ do not have exactly the same measures of skewness. Recall that for the radially symmetric solution the values of ${Q}_s$ at $R=0$, and $R=1/2$ are $1$ and $5/3$, and the maximum value of $Q_s$ is $6.5$, and occurs behind the region of blowup. For the numerically computed mesh, along the axis $y=0$, the value of $\hat{Q}_s$ at $(0,0)$, and $(1/2,0)$, is $1$ and $4.4$.  Therefore, at the boundary, the skewness is more than double that of the mesh generated from the radially symmetric solution. The maximum skewness $\hat{Q}_s=7.1$ is also slightly greater than the radially symmetric case, although we note that it occurs at the exact same point just behind the region of blow-up. Along the axis $y=x$ the elements in the numerically computed mesh are not as stretched in the radial direction as in the radially symmetric case. The maximum value of $\hat{Q}_s$ is $3$ and occurs just behind the singular region.  At the boundary the value is only 1.2. We also obtain similar results for the ring case in the region outside the ring. In particular, when $\theta$ exceeds 1 the effects of the geometry become more significant,  and the larger the value of $\theta$ the more skew the elements are near the boundary. If $\theta$ is much larger than 1 the elements of greatest skewness occur just outside of the ring and not at the boundary. However, inside the ring and more importantly along the ring the values of $\hat{Q}_s$ and $Q_s$ do not differ significantly, hence the geometry of the mesh has very little impact on the degree of anisotropy in these regions.

%The PMA meshes generated using periodic boundary conditions for the density function (\ref{rho_sech}) are identical to those with Neumann boundary conditions shown in Figs. \ref{ringmesh3PMA} and \ref{blowupPMA}.
%
%\begin{figure}[hhhhhhhh!!!!!]
% \includegraphics[height=6cm,width=7.5cm]{qsxaxis_0.pdf} \includegraphics[height=6cm,width=7.5cm]{}
%\caption{ \small A comparison of  $Q_s$ and $\hat{Q}_s$ for the numerically computed mesh in Fig. \ref{blowupPMA} along $y=0$(left), and along $y=x$ (right)}
%\label{qsxaxis_0}
%\end{figure}

%%%%%%%%%%%%%%%%%%%%%%%%%%%%%%%%%%%%%%%%%%%%%%%%%%%%%%%%%%%%%%%%%%%%%%%%%%%%%%%%%%%%%%%%%%%%%%%%%%%%%%%%%%%%%%%%%
%SECTION % NON_LINEAR EXAMPLES
%%%%%%%%%%%%%%%%%%%%%%%%%%%%%%%%%%%%%%%%%%%%%%%%%%%%%%%%%%%%%%%%%%%%%%%%%%%%%%%%%%%%%%%%%%%%%%%%%%%%%%%%%%%%%%%%%
\section{Examples of mesh alignment to more general features}

\noindent The exact calculations presented in the previous two sections have looked at features with simple geometries, while in practical calculations the mesh can have a much more complex geometry. In this section we will consider two examples of such, and consider the geometry of the meshes computed numerically by solving (MA) for an appropriate density function $\rho$.
The first example  has a prescribed (scalar) density $\rho$ and the second has $\rho$ given in terms of the evolving solution of a PDE which is known to develop complex features on small length scales. In both cases the features have 
certain sections which are similar to the linear features of Section 3 and we shall see similar alignment of the meshes close to them. Similarly, they also have features with curvature, in which case the results of Section 4 can be used to predict the (local) geometry of the mesh.

\subsection{Example 1: A prescribed monitor function}
Consider the density function
\begin{equation}
\rho=1+\alpha_1 \; \mathrm{sech}^2(\alpha_2\vert \Psi\vert), \hspace{1cm}\Psi=y-0.2 \; \sin(2\pi (x+0.5)).\label{sin}
\end{equation}
which describes a sinusoidal feature of thin cross-section. We will consider both the local and the global geometry of the mesh that results when solving the Monge-Ampere equation in a square domain with Neumann boundary conditions in the y-direction and periodic boundary conditions in the x-direction. Such boundary conditions are appropriate for the solution of periodic waves such as (\ref{sin}),  and arise naturally in many meteorological applications \cite{walsh}.
In Fig.~\ref{mesh6} the numerically calculated mesh with this density function, for $\alpha_1=20$, $\alpha_2=100$, $\theta=1.2$ with the above boundary conditions, and the corresponding ellipses for the Jacobian $\mathbf{J}$, are shown on the right.  
\begin{figure}[hhhhhhhh!!!!!]
\includegraphics[height=5.2cm,width=7cm]{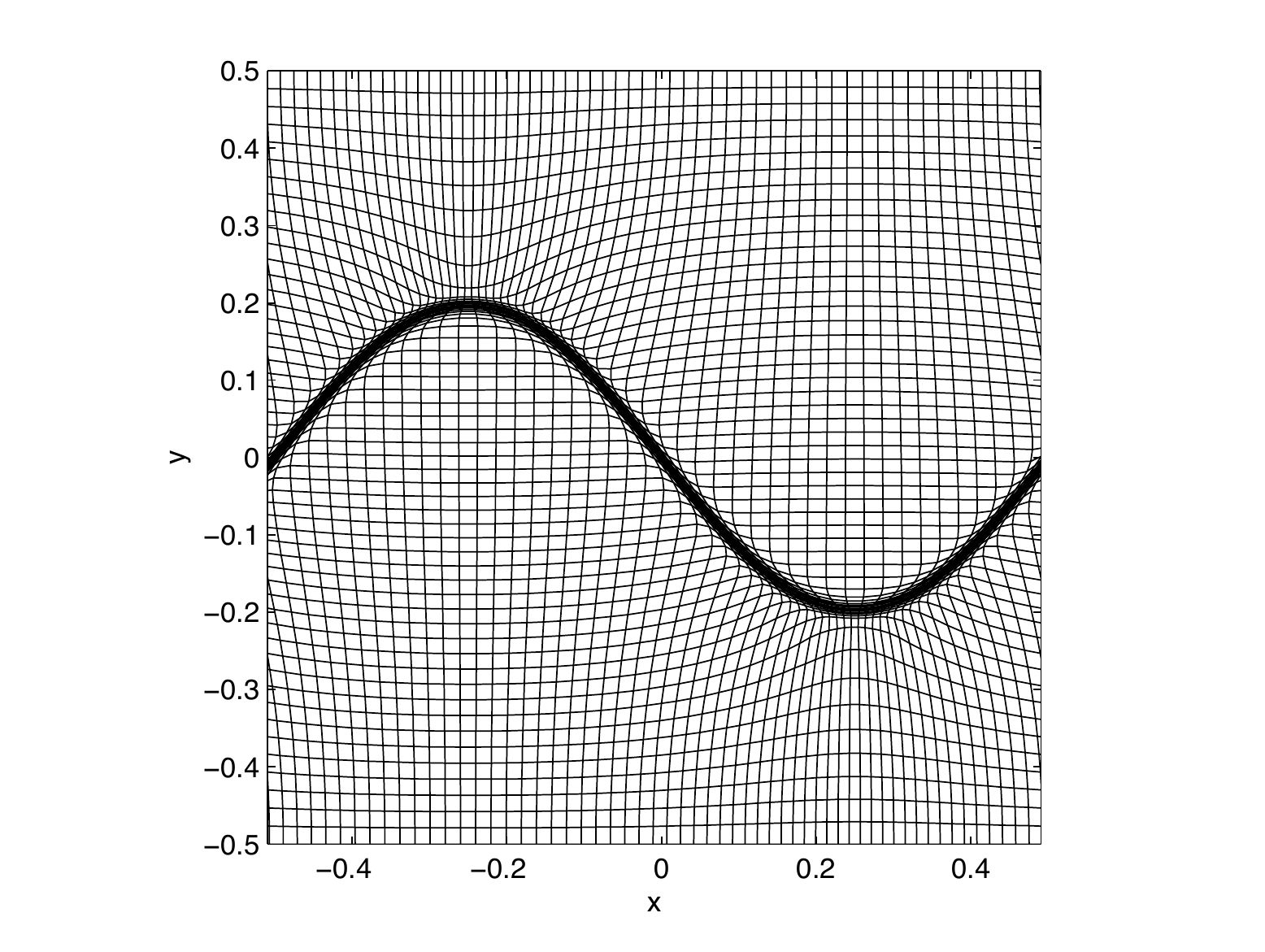}
\includegraphics[height=5.2cm,width=6cm]{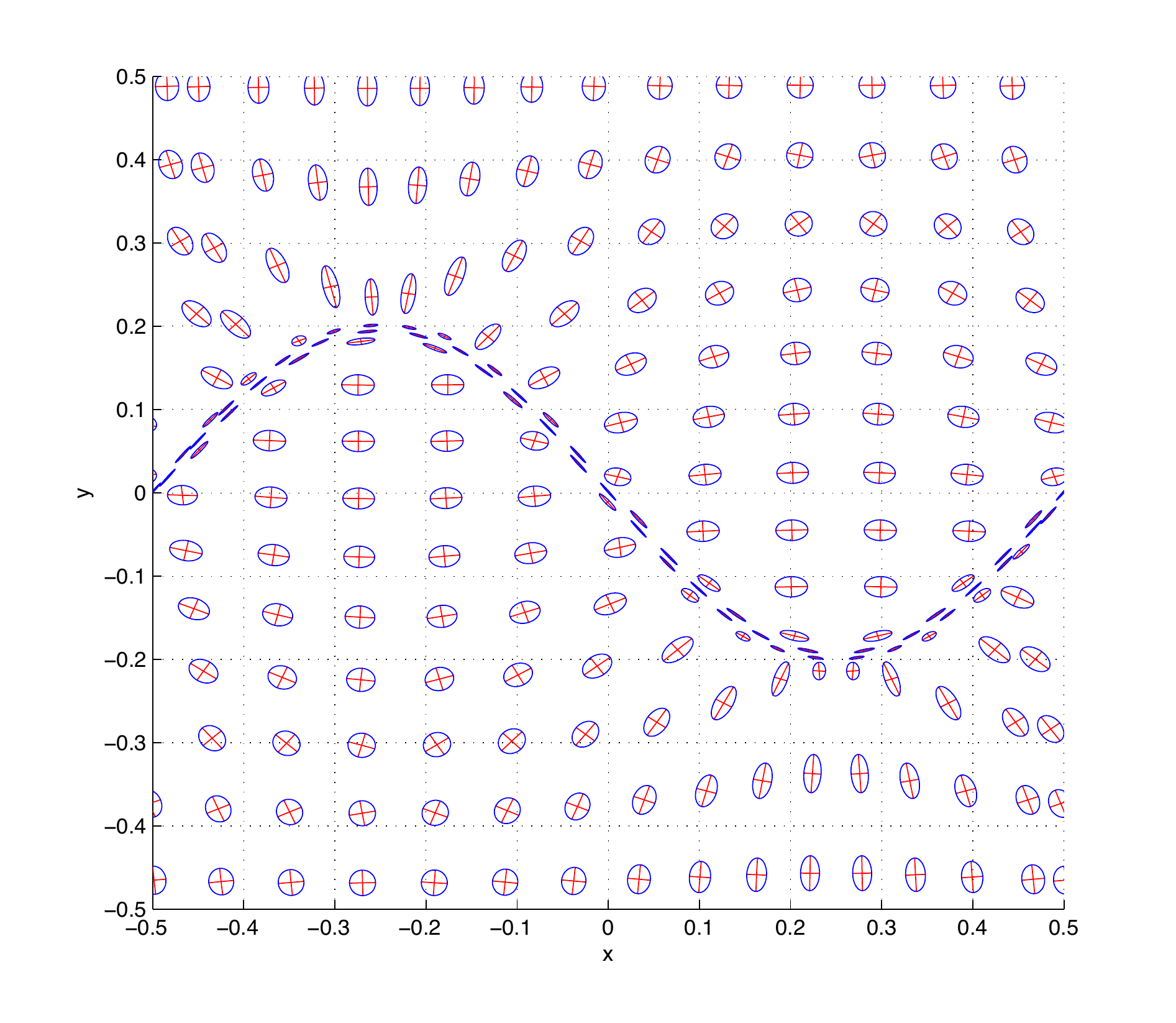}\\
\caption{ \small The numerically computed mesh (60 $\times$ 60) generated for the density function (\ref{sin}) with $\alpha_1=20$, $\alpha_2=100$, $\theta=1.4$, with Neumann boundary conditions n the y-direction and periodic in the x-direction (left), and the circumscribed ellipses of the Jacobian $\mathbf{J}$ (right).}
\label{mesh6}
\end{figure}
It would appear that the eigenvectors of $\mathbf{J}$  are orthogonal and tangential to the curve defined as the set for which $\Psi(\mathbf {x})=0$. Given that $\rho$ is constant along this curve, it is reasonable to assume there will be no movement of the mesh in that direction, so the eigenvalue corresponding to the tangential eigenvector is estimated to be 1, implying the eigenvalue in the orthogonal direction is $\theta/\rho$. 
The symmetric matrix $\mathbf{\tilde{J}}$ corresponds to a metric tensor $\mathbf {\tilde{M}}$ with eigenvalues and eigenvectors given by
$$ \tilde{\mu}_1=\rho^2/\theta, \hspace{.5cm} \tilde{\mu}_2=\theta, \hspace{.5cm} \mbox{and}\hspace{.5cm} \mathbf{\tilde{e}}_1={\nabla \Psi}/{\| \nabla \Psi \|}.$$  
 Notice that these eigenvalues correspond to those derived in Section 3 for a single  linear feature where $\Psi=\mathbf{x}\cdot\mathbf{e_1}-c$.  This is a very good approximation 
in the regions along the sinusoidal feature that are close to linear, where we observe good alignment to the feature (see Fig.~\ref{metrics_example6b} (right)). Furthermore, the mesh is close to being uniform away from the feature.
However, in regions where the feature has more curvature, the mesh elements are less anisotropic (see  Fig.~\ref{mesh6}). Interestingly, the eigenvalues of $\mathbf {\tilde{M}}$ are not a good approximation in this region but the eigenvectors are. In fact we observe that the eigenvectors are approximately tangential and orthogonal to the shock along the entire curve $\Psi({\mathbf x}) = 0$.
For these regions of maximum curvature of the feature, we instead approximate the eigenvalues of the metric tensor by using the radially symmetric solution of the Monge-Ampere equation studied in Section 4. Specifically we assume that in the region $x_1<x<x_2,$ \& $y<y_1$, $\rho$ can be well approximated as part of a radially symmetric feature with density function
\[
\hat{\rho_1}=1+\alpha_1 \; \mathrm{sech}^2(\alpha_2\vert \Psi_1\vert)\nonumber, \hspace{1cm}\Psi_1=\hat{R_1}^2-{a}^2,\nonumber
\]
and similarly in the region $-x_2<x<-x_1,$ \& $y>y_1$, by
\[
\hat{\rho}=1+\alpha_1 \; \mathrm{sech}^2(\alpha_2\vert \Psi_2\vert)\nonumber, \hspace{1cm}\Psi_2=\hat{R_2}^2-{a}^2,\nonumber
\]
where $\hat{R_1}=\sqrt{(x+0.25)^2+(y+{a}-0.2)^2}$ and $\hat{R_2}=\sqrt{(x-0.25)^2+(y-{a}+0.2)^2}$. The radius $a$ of the radially symmetric feature is estimated by taking the average radius of curvature along a section of $\Psi$. We can then approximate the eigenvalues tangential and orthogonal to $\Psi$ as in Section 1.1.4. 
Note that we calculate $\theta$ using the integral of the original density function $\rho$ over the domain, rather than $\hat{\rho}$. The numerical Jacobian $\mathbf{J}$, when solving PMA using doubly periodic boundary conditions, is compared to an approximation of the Jacobian $\mathbf{\hat{J}}$ using the eigenvalues from the radially symmetric solution (see Fig. {\ref{metrics_example6b}}(left)), for $\alpha_1=20$, $\alpha_2=100$, $\theta=1.4$, $a=0.25$, and $x_1=0.18$, $x_2=0.32$, and $y_1=0.18$. The circumscribed ellipses for a number of elements near a region of high curvature are shown
together with their semi-axes, which are depicted in red. The semi-axes of the the ellipses associated with $\mathbf{\tilde{J}}$ are shown in black.
\begin{figure}[hhhhhhhhHHHHHHHHHHHHHHHHHHHHHHHHHHHHHHHHHHHHHHHHHHHHHHHHHHHHHHHHHHHHHHHHHH!!!!!]
\includegraphics[height=5.2cm,width=6cm]{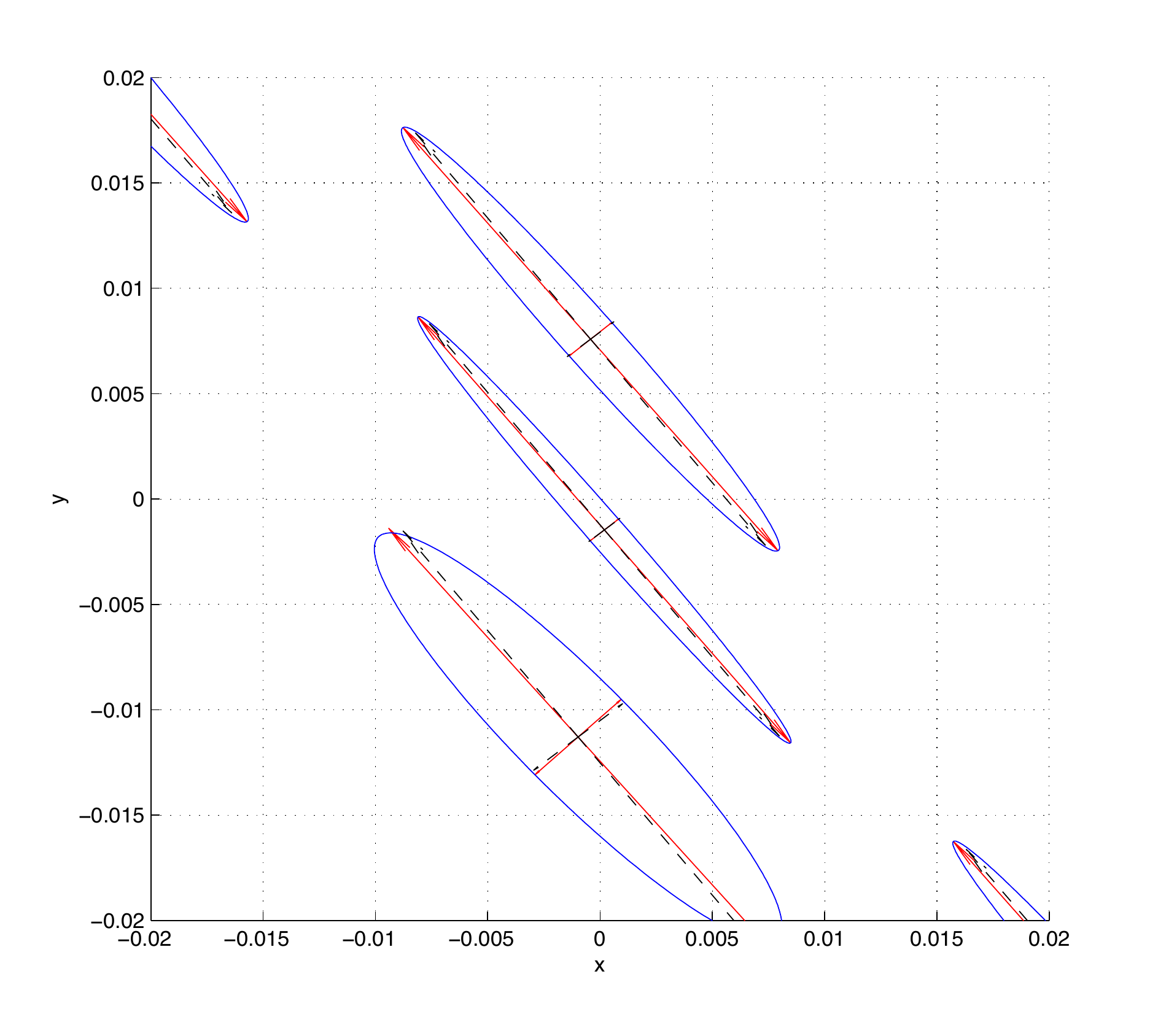}\includegraphics[height=5.2cm,width=6cm]{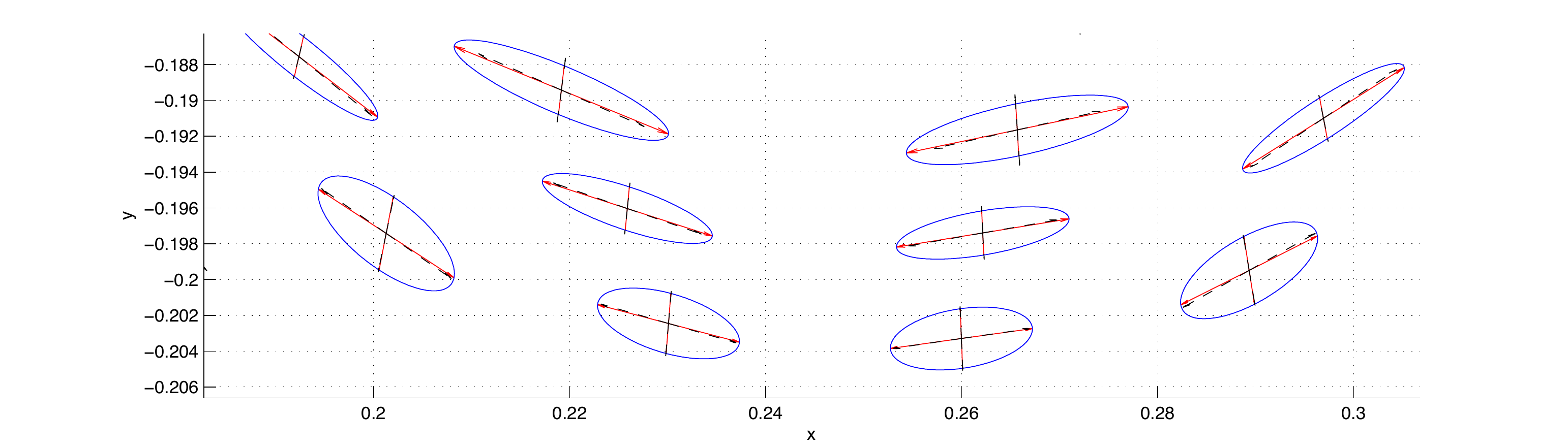}\\
\caption{The eigenplot for $\mathbf{J}$ (red) and $\mathbf{\tilde{J}}$ (black) along the feature, where $\Psi$ is approximately linear (left).The eigenplot for $\mathbf{J}$ (red) and $\mathbf{\hat{J}}$ (black) along the feature  in a region of high curvature (right).}
\label{metrics_example6b}
\end{figure}
%\begin{figure}[hhhhhhhh!!!!!]
%\begin{center}\includegraphics[height=6cm,width=8.5cm]{AL_Figures2/sineb_N.pdf}\end{center}
%\caption{ \small The numerical Jacobian with Neumann boundary conditions in the y-direction and periodic in the x-direction (red) and it's approximation (black) using the radially symmetric solution in regions of curvature. }
%\label{sineN}
%\end{figure}
If we instead choose $\alpha_1=50$, $\alpha_2=50$, such that $\theta=3$, then $\mathbf{J}$ is well approximated by $\mathbf{\tilde{J}}$ and $\mathbf{\hat{J}}$ in the linear regions and regions of high curvature, respectively, along $\Psi$. 
The mesh and  $\mathbf{J}$ are shown in Fig. \ref{sinemesh}.
\begin{figure}[hhhhhhhh!!!!!]
\begin{center}\includegraphics[height=6cm,width=7.5cm]{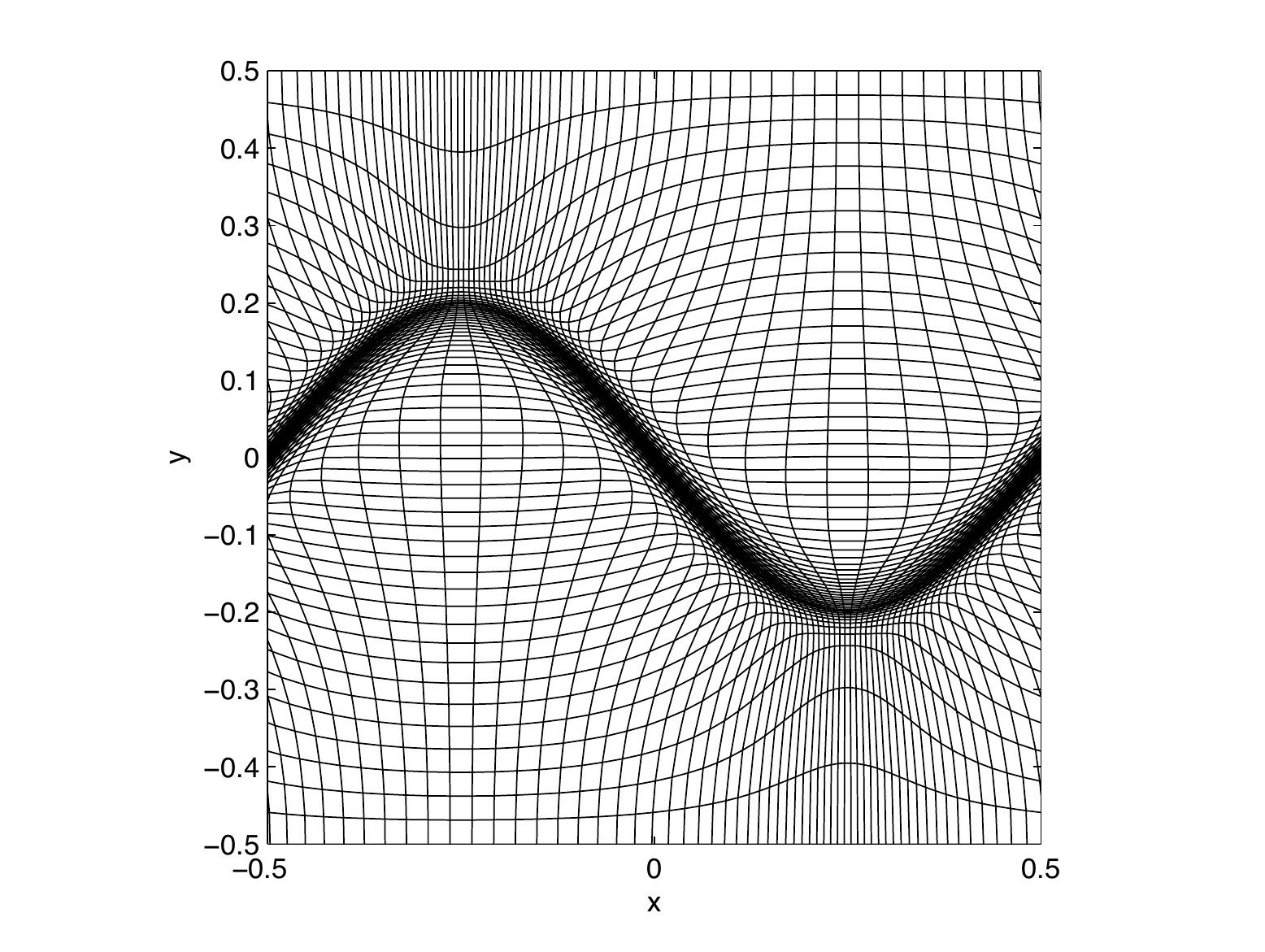}\includegraphics[height=6cm,width=6.5cm]{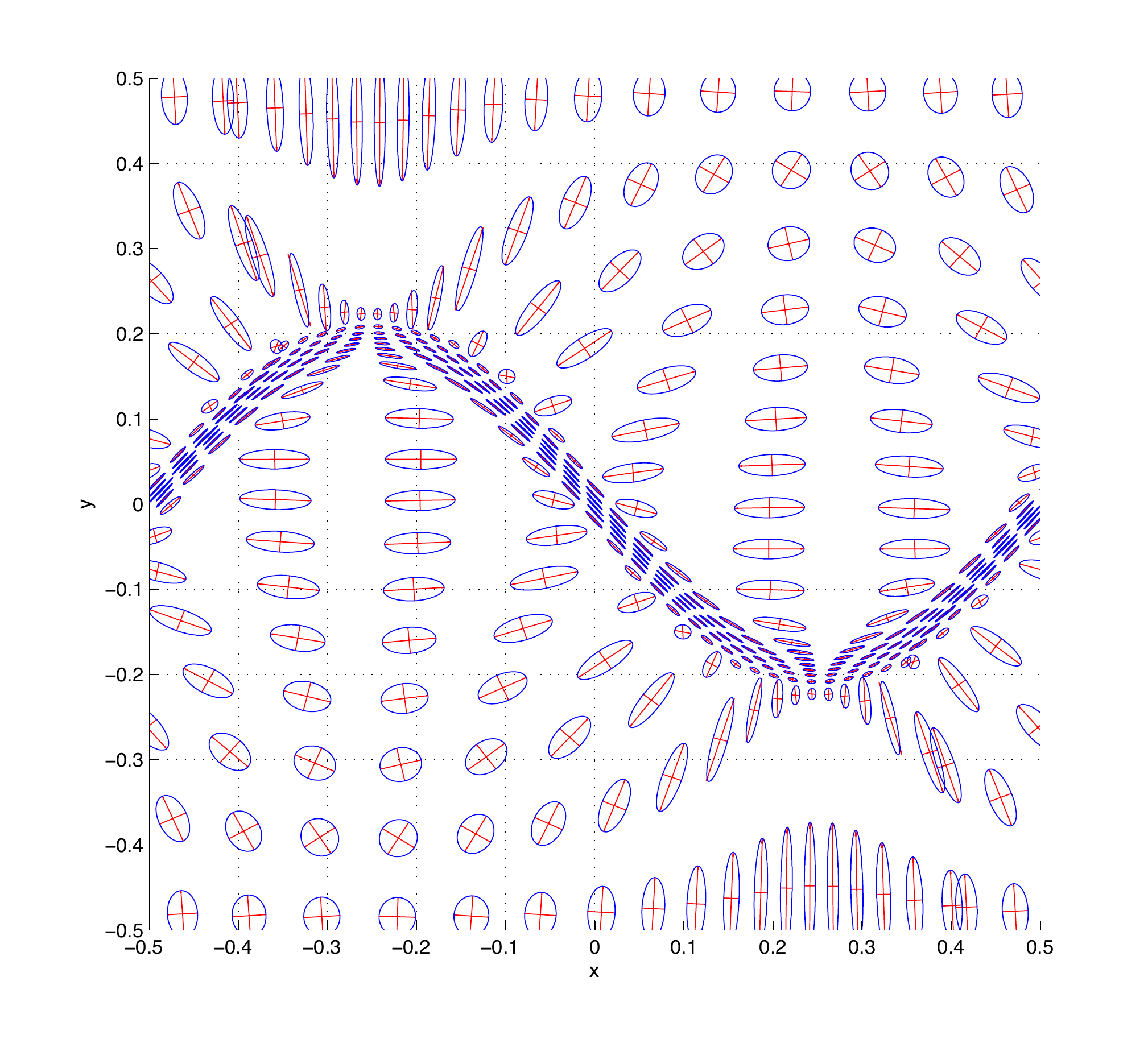} \end{center}
\caption{ \small The computed mesh for the density function (\ref{sin}) with $\alpha_1=50$, $\alpha_2=50$, $\theta=3$, and $a=0.25$ with Neumann boundary conditions in the y-direction and periodic in the x-direction (left), and  an eigenplot of $\mathbf{J}$ (right). }
\label{sinemesh}
\end{figure}
We note that when $\theta$ is greater than $1$ the approximation underestimates the level of a skewness close to the top and bottom boundary. Furthermore, due to the Neumann boundary condition the eigenvectors are not aligned tangential and orthogonal to $\Psi$ at the boundary. 

%If we instead use periodic boundary conditions in the y-direction the map and hence the skewness will be different at %the top and bottom boundary. A comparison of skewness measure $\hat{Q}_s$ for the two cases is shown in Fig. %{\ref{sineN}).
\begin{figure}[hhhhhhhh!!!!!]
\begin{center}\includegraphics[height=6cm,width=6.5cm]{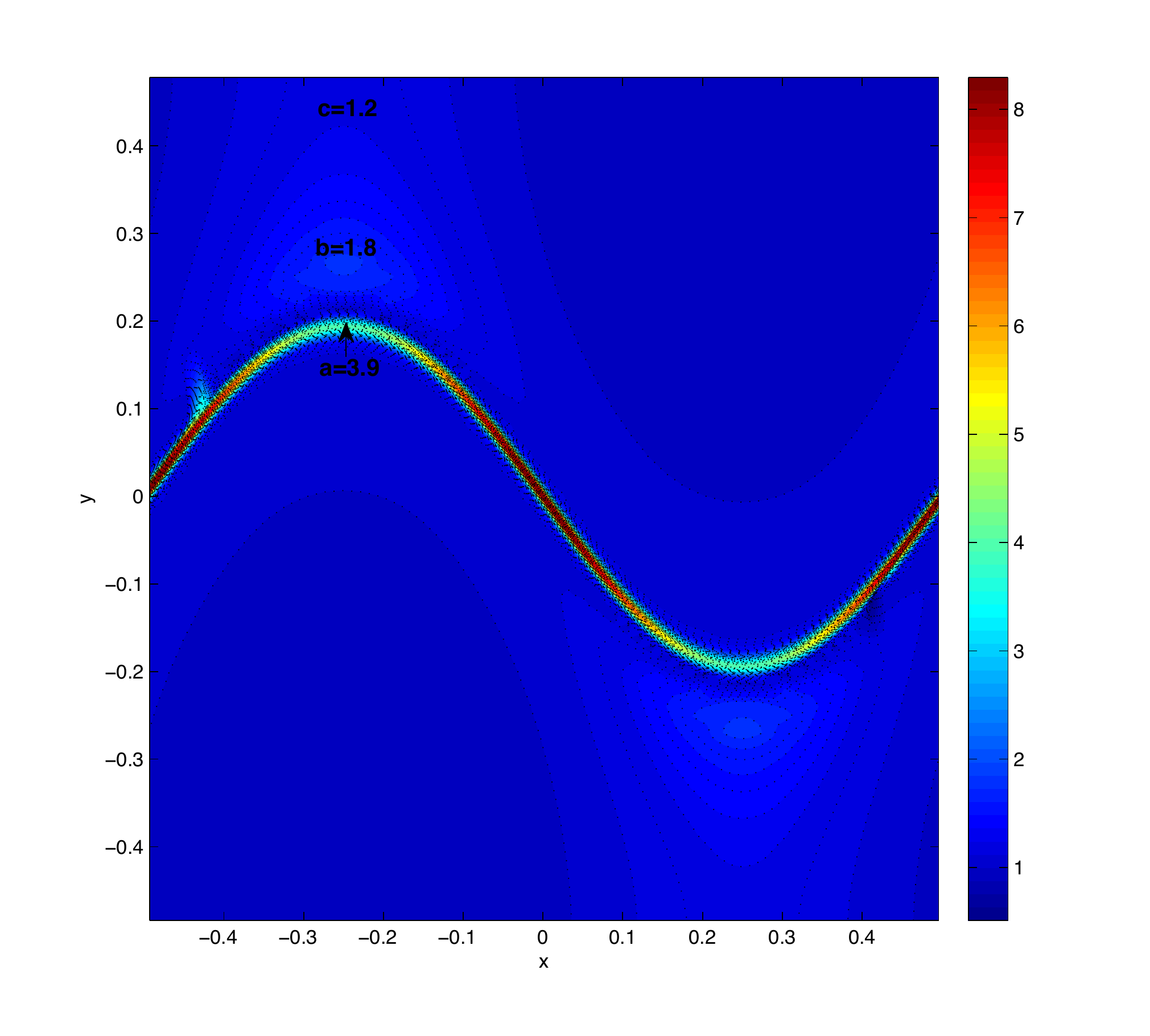}\includegraphics[height=6cm,width=6.5cm]{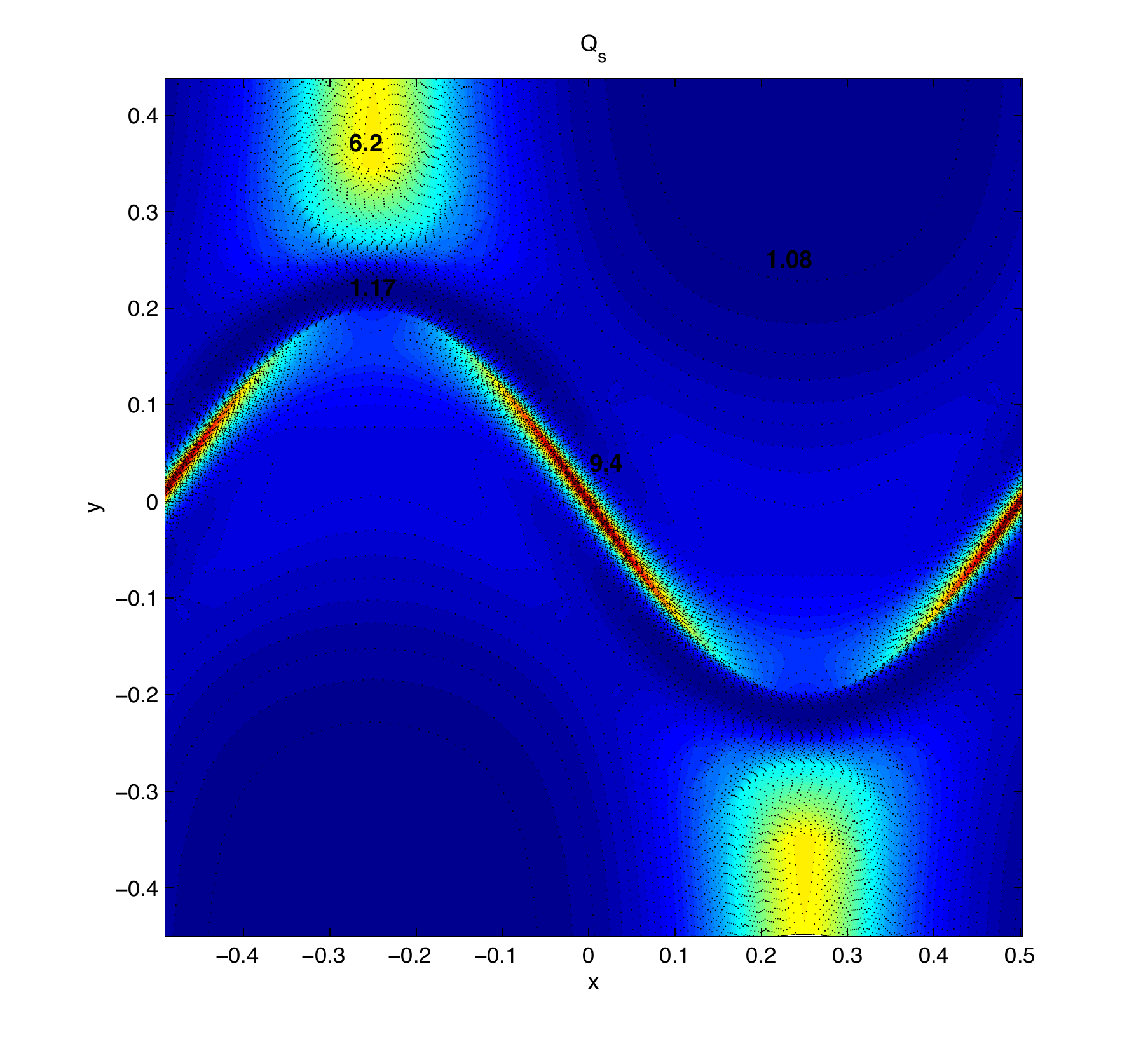}\end{center}
\caption{ \small The value of $\hat{Q}_s$ for the numerically computed mesh with density function (\ref{sin}) and Neumann boundary conditions in the y-direction and periodic in the x-direction when $\alpha_1=20$, $\alpha_2=100$, $\theta=1.4$, and $a=0.25$ (left), $\alpha_1=50$, $\alpha_2=50$, $\theta=3$ (right) . }
\label{qs_sin}
\end{figure}

\subsection{Example 2: Time Dependant Solution of a nonlinear PDE}
We now consider the adaptive numerical solution of the Buckley Leverett equation
\begin{equation}
{
u_t+f(u)_x+g(u)_y=\dot{x}u_x+\dot{y}u_y+\mu \triangle u,
}
\label{BL1}
\end{equation}
with $\mu=1.1\times10^{-3}$. The flux functions are
$$f(u) = \frac{u^2}{3(u^2+(1-u)^2)},\quad g(u)=\frac{1}{3}f(u)(1-5(1-u)^2),$$
and the initial data is
\[
u(x,y,0)=\left\{\begin{array}{ll}1,&(x-0.5)^2+(y-0.5)^2<\frac{1}{18}\\0&\mbox{otherwise.}\end{array}\right.
\]
This model includes gravitational effects in the y-direction. The exact solution is unknown, 
although numerical results \cite{Karlsen}, \cite{ZhangTang}, indicate a thin and curved reaction front forms which is our main motivation for studying it here.
The solution to (\ref{BL1}) is computed on the domain $[0,1]^2$ up to time t=0.4. To compute this solution the mesh is continuously updated by solving a parabolised version (PMA) of the MA equation as described in \cite{BW:09}.
The coupled system of the Buckley Leverett equation  and PMA is then solved in the computational domain using an alternate procedure with a composite centred finite difference scheme used to discretise both systems. For this calculation we use an arc-length based density function given by
\[
\rho =\sqrt{1+\vert \nabla u\vert^2}.
\]
The solution and mesh at $t=0.4$ are shown in Fig.~\ref{blmesh}.
\begin{figure}[hhhhhhhh!!!!!]
\begin{center}\includegraphics[height=6cm,width=6.5cm]{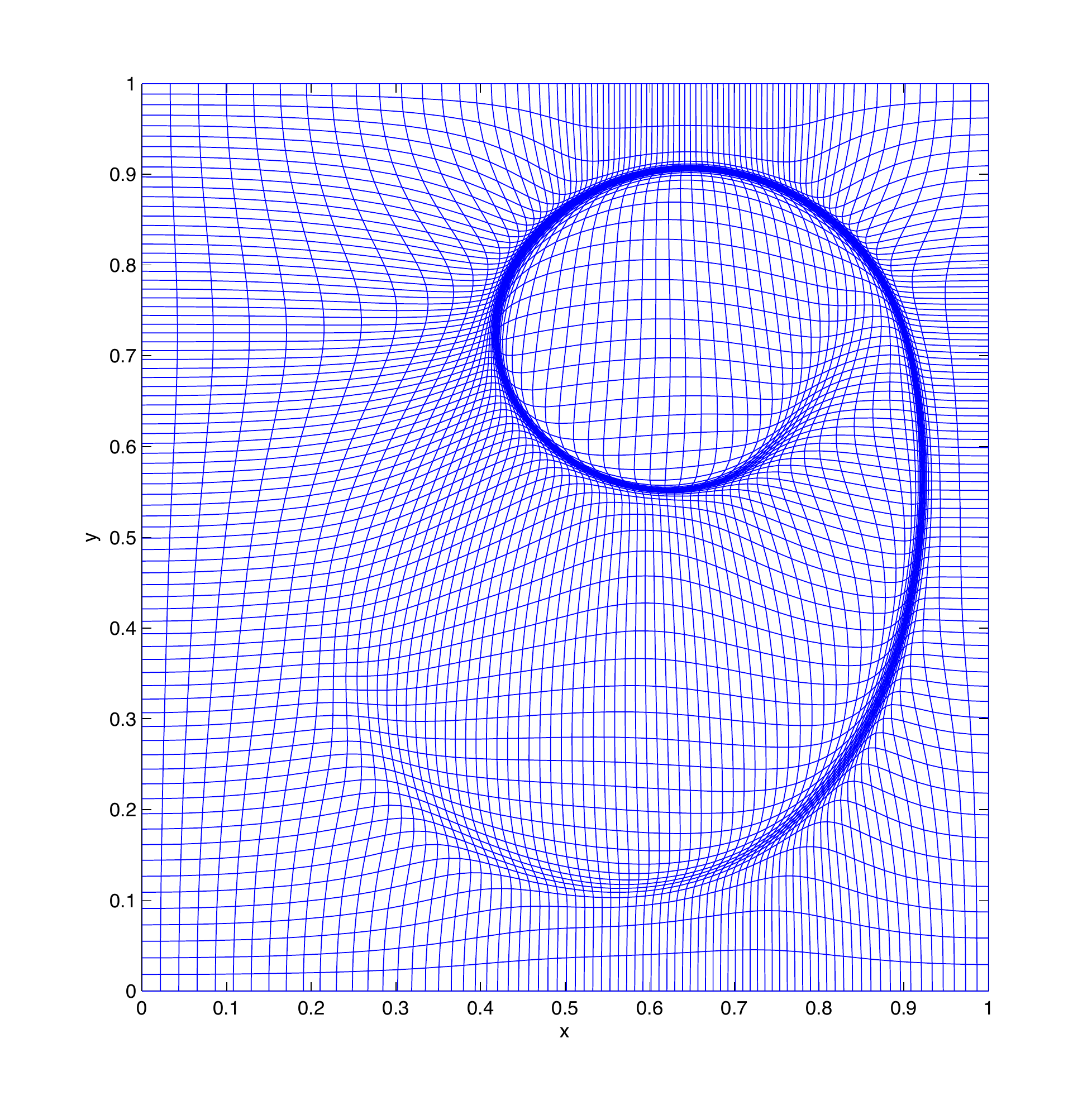}\includegraphics[height=6cm,width=6.5cm]{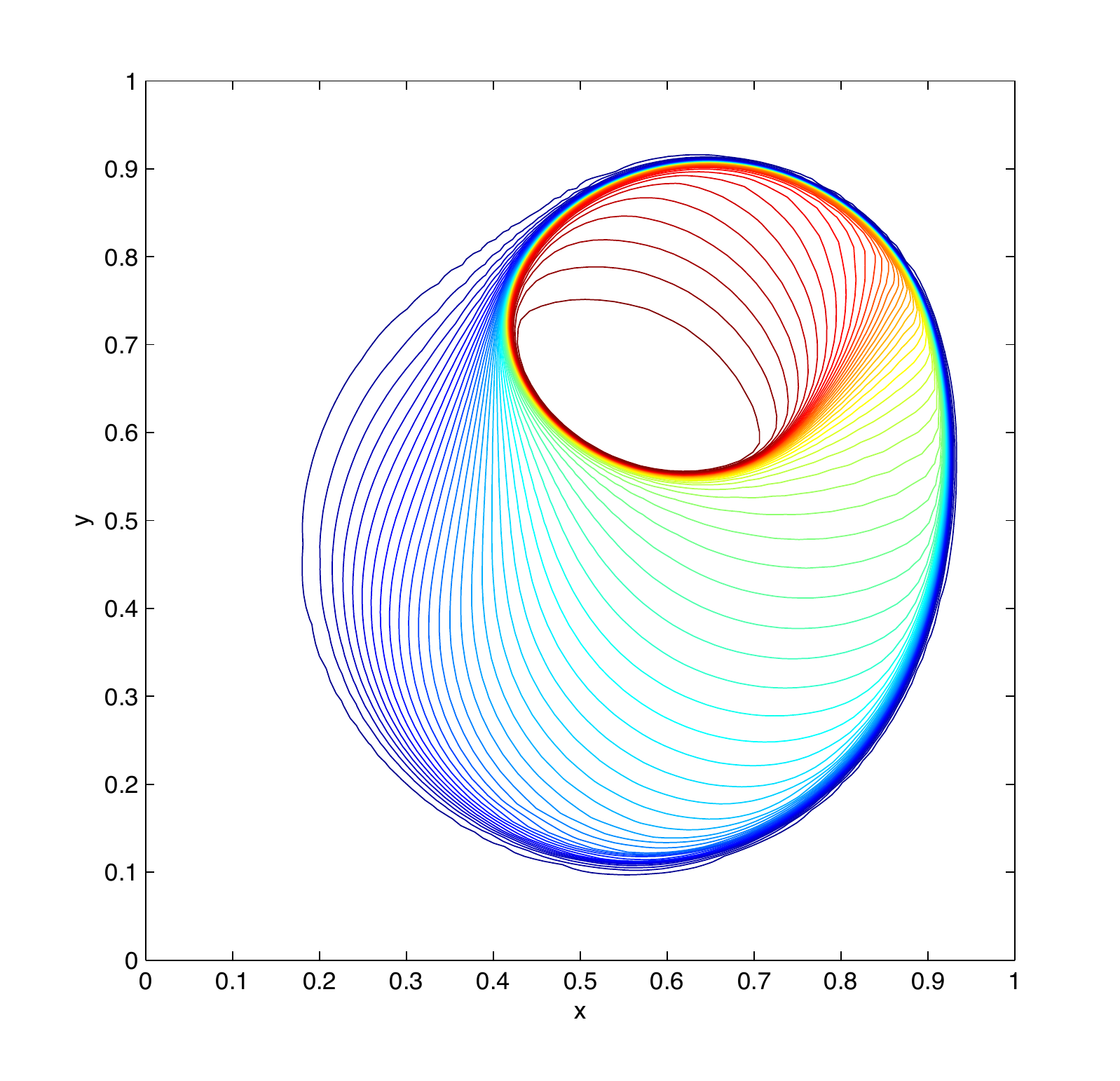}\end{center}
\caption{ \small The numerically computed mesh $(80\times80)$ with Neumann boundary conditions for the Buckley Leverett problem at t=0.4 (left), the solution (right). }
\label{blmesh}
\end{figure}
In Fig.~\ref{blmesh2} a plot of the circumscribed elipses of the Jacobian for a number of mesh elements reveals that in the region where the density function is large the eigenvectors are tangential and orthogonal to the feature. Furthermore the eigenvectors remain aligned to this feature as the solution and mesh evolves in time. 
\begin{figure}[hhhhhhhh!!!!!]
\begin{center}\includegraphics[height=6cm,width=6.5cm]{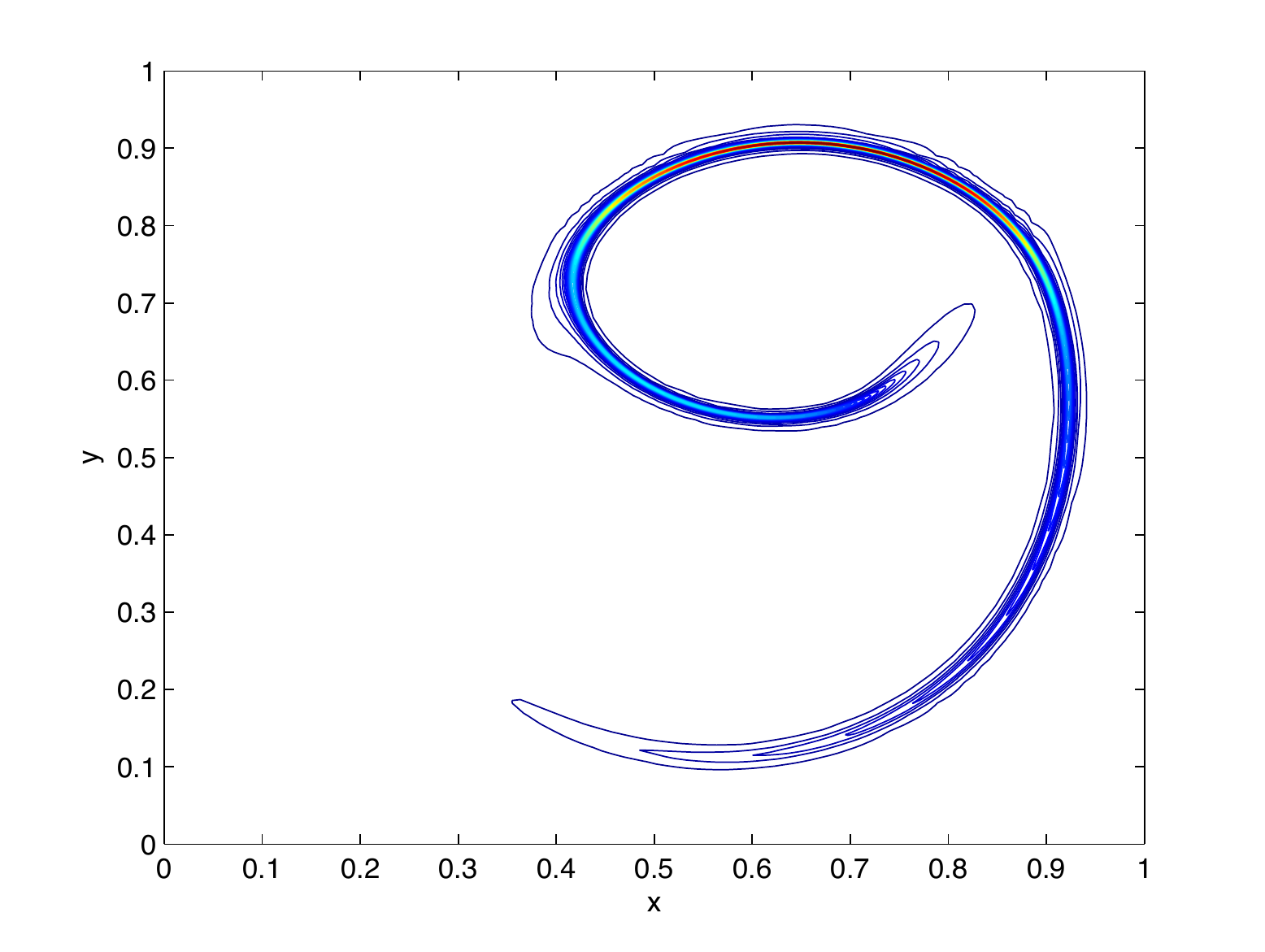}\includegraphics[height=6cm,width=6.5cm]{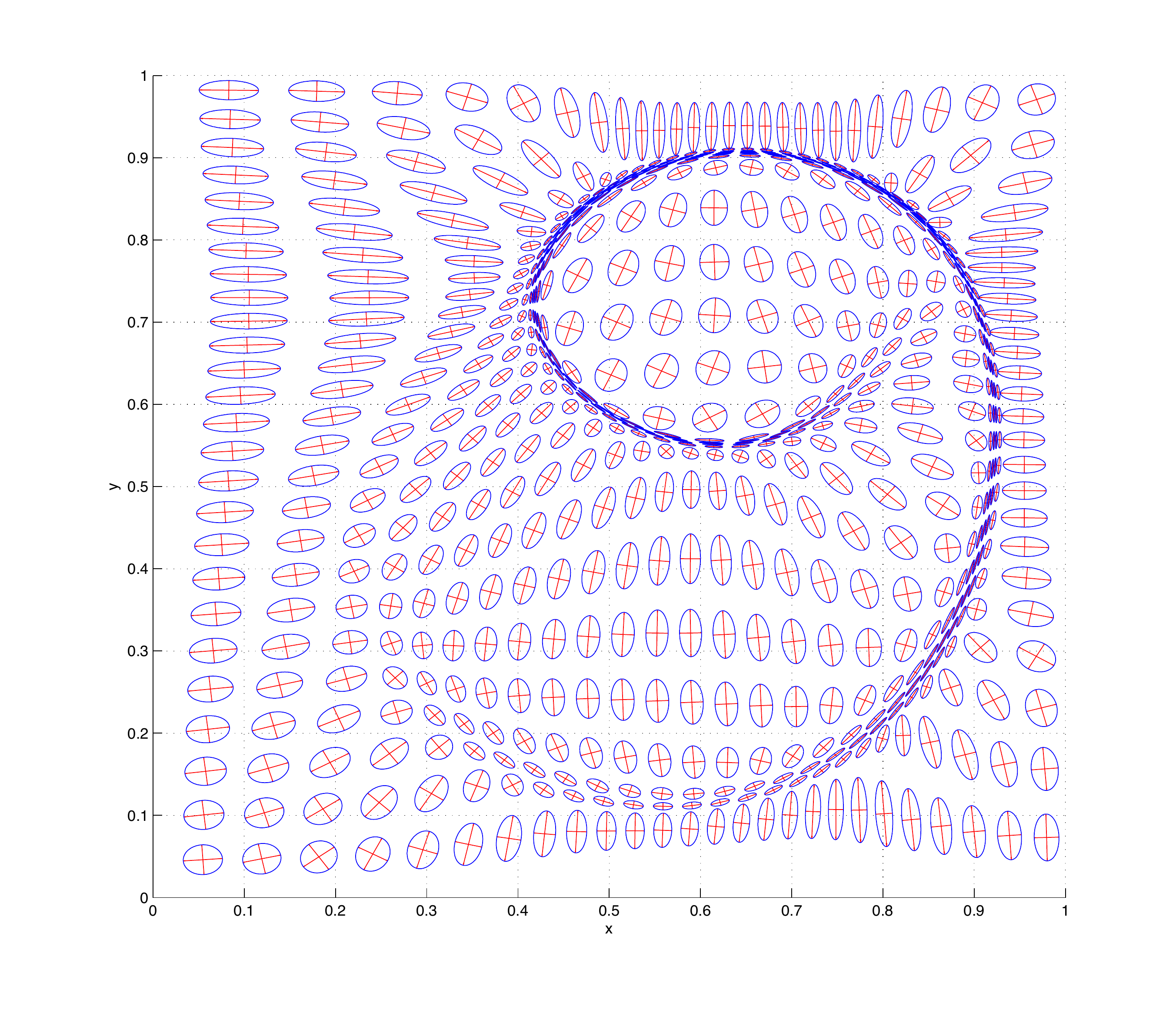}\end{center}
\caption{ \small The density function $\rho$ at $t=0.44$ for the Buckley Leverett problem (left), and  circumscribed ellipses of the Jacobian for the corresponding mesh (right). }
\label{blmesh2}
\end{figure}
A comparison of the eigenvalues with those associated with a linear feature shows that this is an excellent approximation along regions of the curve that are close to linear  (see Fig.~\ref{BL_lin_zoom} (left)).
Moreover, in regions of high curvature a radially symmetric solution gives a much better approximation. The density function in a region of high curvature is considered to be part of a radially symmetric feature with density function $\tilde{\rho}$ 
\[
\tilde{\rho}=1+\alpha_1 \mathrm{sech}^2(\alpha_2\vert \Psi_2\vert)\nonumber, \hspace{1cm}\Psi_2=\tilde{R}^2-{a}^2,\nonumber
\]
where $\tilde{R}=\sqrt{(x-0.62)^2+(y-.72)^2}$, $a=0.2$, $\alpha_1=70$, and $\alpha_2=500$.
A comparison of the eigenvalues of the Jacobian with those associated with the density function $\tilde{\rho}$, which are computed using the radially symmetric solution, are shown in a region of high curvature in Fig. \ref{BL_lin_zoom} (right).
\begin{figure}[hhhhhhhh!!!!!]
\begin{center}\includegraphics[height=6cm,width=6.5cm]{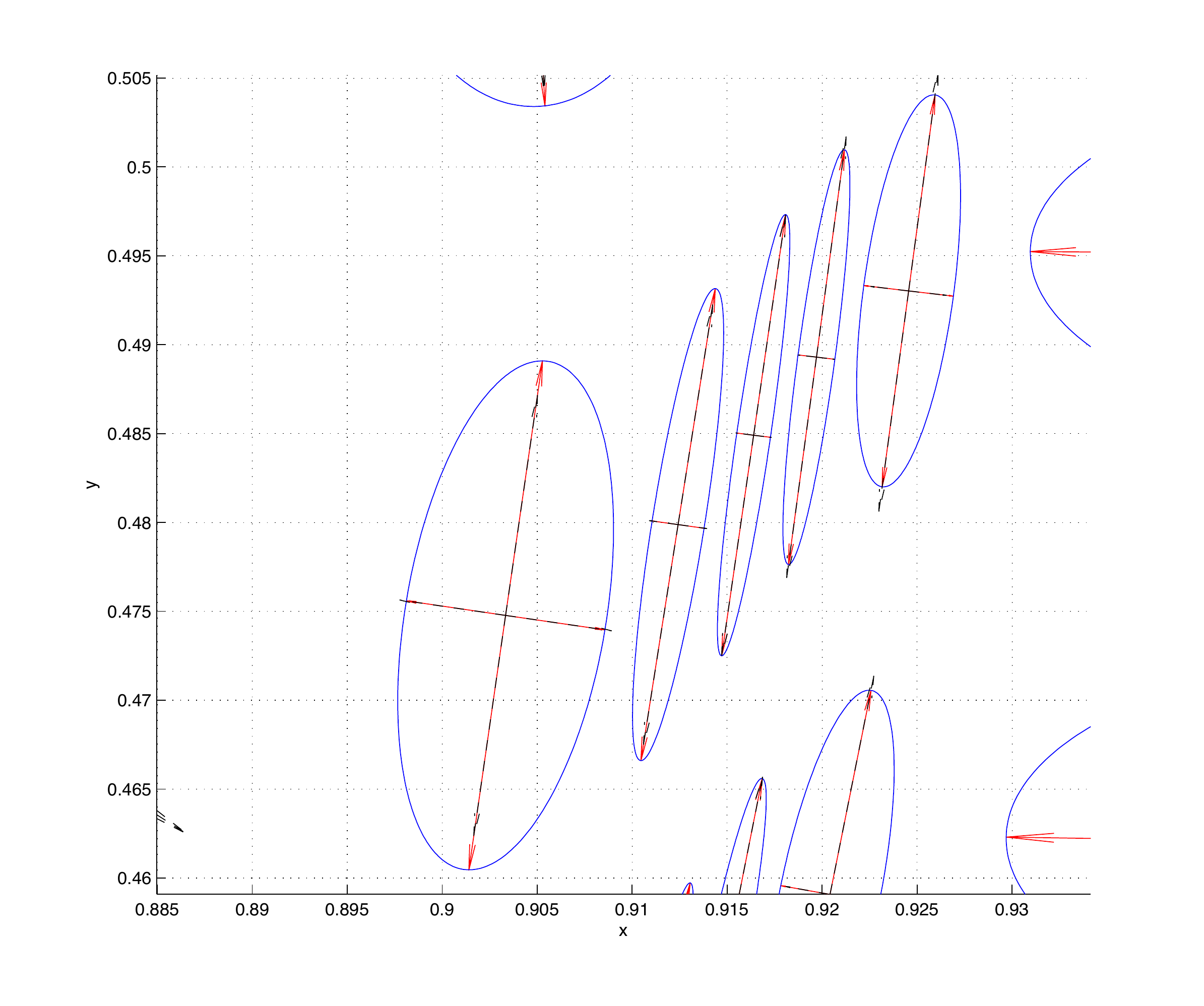}\includegraphics[height=5.8cm,width=6cm]{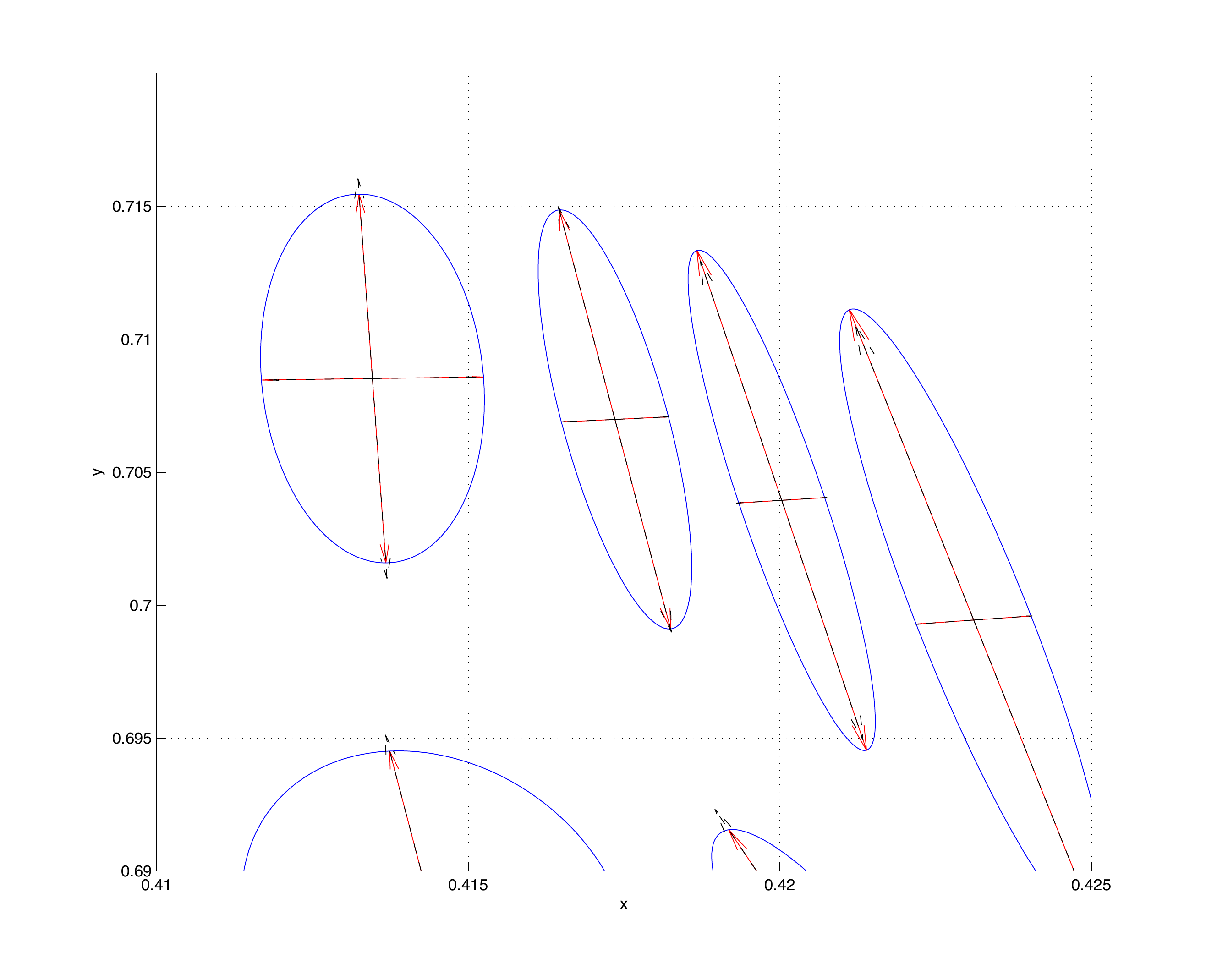}\end{center}
\caption{ \small A comparison of the eigenvalues of the Jacobian (red) with those corresponding to the linear solution (black) in a region of low curvature (left), and those corresponding to the radially symmetric solution in a region of high curvature (right). }
\label{BL_lin_zoom}
\end{figure}

\section{Conclusions}
We have shown that a  mesh redistribution method that is based on equidistributing a scalar density function via solving the Monge-Amp\`ere equation has the
capability of producing naturally anisotropic meshes in regions of rapid change in the solution structure. Furthermore, we have rigorously shown this for model
problems comprising orthogonal linear features and radially symmetric features by
deriving the exact metric tensor to which these meshes align. We have also demonstrated that the  
results for these linear and radially symmetric cases can be used to approximate alignment for more complicated flow structures that arise in the solution of a non-linear PDE.  
The metric tensor has a very similar form to those traditionally used in variational methods, and given that determination of such a tensor
can be a difficult task, it would definitely be advantageous if an optimal metric tensor arose naturally from the solution of the Monge-Amp\`ere equation. This is indeed fascinating, and a closer examination of how this metric tensor is related to those known to minimise interpolation error is the subject of ongoing research.

\section*{Acknowledgments}The authors thank Phil Browne (University of Reading), Mike Cullen (UK Met Office), Weizhang Huang (University of Kansas), and  J F Williams (Simon Fraser University), for many useful suggestions and encouragement. The research of the second and third author is partially supported by NSERC Grant A8781.

\end{document}